\documentclass{article}
\usepackage[numbers]{natbib}
\usepackage[utf8]{inputenc}
\usepackage{amsmath}
\usepackage{amsfonts}
\usepackage{amssymb}
\usepackage{mathtools}
\usepackage{mathrsfs}
\usepackage{hyperref}
\usepackage{xcolor}
\usepackage{amsthm}
\usepackage{framed}
\usepackage{manfnt} 
\usepackage [english]{babel}
\usepackage [autostyle]{csquotes}
\usepackage{parskip}
\usepackage{tikz-cd} 
\usepackage[right = 3cm, left=3cm, top = 3cm, bottom = 3cm]{geometry}
\usepackage[pdftex]{pict2e}
\usepackage{graphicx}
\usepackage{comment}

\usepackage{appendix}

\colorlet{shadecolor}{gray!50}
\definecolor{darkspringgreen}{rgb}{0.13, 0.55, 0.35}

\newcommand{\Z}{\mathbb{Z}}

\newcommand{\A}{\mathbb{A}}
\newcommand{\Ps}{\mathbb{P}}

\newcommand{\etale}{\'etale }

\newcommand{\Einf}{$E_{\infty}$}

\newtheorem{definition}{Definition}[section]
\newtheorem{theorem}[definition]{Theorem}
\newtheorem{lemma}[definition]{Lemma}
\newtheorem{proposition}[definition]{Proposition}
\newtheorem{corollary}[definition]{Corollary}

\newtheorem{example}[definition]{Example}

\theoremstyle{definition}

\title{A priori obstructions to resolution of singularities}
\author{Tobias Shin}
\date{}

\begin{document}

\maketitle
\begin{abstract}
    We present an argument due to Thom to formulate a priori cohomology obstructions for a projective variety to admit an embedded resolution of singularities, and generalize the argument to a field of characteristic $p > 0$. We show that these obstructions are defined for a general cohomology theory equipped with a proper pushforward, and which satisfies localization, \'etale excision, and $\mathbb{A}^1$-invariance. As examples, we show that odd degree Steenrod homology operations on higher Chow groups mod $\ell$, defined by a suitable $E_{\infty}$-algebra structure, vanish on the fundamental class of a smooth variety, where $\ell$ is any prime. We extend this general vanishing result to arbitrary varieties for operations defined mod $\ell \neq p$. Along the way, we also obtain a Wu formula for the mod $p$ Steenrod operations in the case of closed embeddings of smooth varieties.
\end{abstract}

\tableofcontents

\section{Introduction}

The aim of this paper was to na\"ively define topological obstructions for a variety to admit a resolution of singularities over an algebraically closed field $k$ of characteristic $p > 0$. We do not succeed in this endeavor; instead, we present a vanishing theorem for generalized cohomology theories satisfying suitable axioms. As an example, we can consider motivic cohomology with coefficients mod $\ell$ where $\ell$ is any prime (including $\ell = p$). There exist mod $\ell$ Steenrod operations $Q^\bullet$ on motivic cohomology, arising from an \Einf-algebra structure, by work of \cite{joshua2001motivic}, \cite{brosnan2003steenrod}, and  \cite{kriz1995operads}. We let $\beta$ denote the Bockstein and its dual.

\begin{theorem}\label{Theorem1.1}
Let $X \hookrightarrow P$ be a closed embedding of a variety $X$ into a smooth variety $P$ over an algebraically closed field of characteristic $p > 0$. Let $Q^\bullet$ be the simplicial Steenrod operations on motivic cohomology $H^*_X(P,\Z/\ell(*))$, where $\ell \neq p$, and let $Q_\bullet$ denote the dual operations on $H_*(X,\Z/\ell(*))$. The odd degree operations $\beta Q_s$ vanish on the fundamental class $[X]$. If $X$ is also assumed smooth, then the odd degree operations $\beta Q_s$ vanish on the fundamental class $[X]$ for $\ell = p$.
\end{theorem}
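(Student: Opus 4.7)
The proposed strategy is Thom's: the fundamental class lifts integrally and, via a Wu-type formula, so does every $Q_s[X]$; the Bockstein then kills $Q_s[X]$ by the Leibniz rule. The plan is to transfer the question from homology to cohomology with supports via the embedding $X \hookrightarrow P$, secure the integral lift, and run a Wu-style computation in the smooth case, then reduce the singular case (when $\ell \ne p$) to the smooth one by proper pushforward along a well-chosen alteration.

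Concretely, first use the closed embedding $X\hookrightarrow P$ and the smoothness of $P$ to identify $[X]\in H_*(X,\Z/\ell(*))$ with its cycle class $\mathrm{cl}[X]\in H^*_X(P,\Z/\ell(*))$, under which the homological operation $Q_s$ corresponds to a cohomological operation $Q^s$ compatible with the cup product. Because $X$ is an honest algebraic subvariety of $P$, its cycle class lifts to integral coefficients, so $\beta\,\mathrm{cl}[X]=0$. This is the input that makes the Leibniz trick work.

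Next, handle the case when $X$ is smooth. Invoke the Wu formula for closed embeddings of smooth varieties --- its mod $p$ avatar is the companion result announced in the abstract, and the mod $\ell\neq p$ version follows by the same method or by \'etale comparison. It yields $Q^s\,\mathrm{cl}[X]=w_s\smile \mathrm{cl}[X]$, where $w_s$ is a universal polynomial in the Chern classes of the normal bundle $N_{X/P}$. Since Chern classes lift integrally, $\beta w_s=0$, and the Leibniz rule
\[
\beta(w_s\smile \mathrm{cl}[X]) = (\beta w_s)\smile \mathrm{cl}[X] + w_s\smile(\beta\,\mathrm{cl}[X]) = 0
\]
gives $\beta Q^s\,\mathrm{cl}[X]=0$. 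This settles the second sentence of the theorem and also the smooth case of the first.

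For the singular case with $\ell\ne p$, use the axiomatic package --- localization, \'etale excision, $\mathbb{A}^1$-invariance, and proper pushforward --- to reduce to the smooth case via a proper surjection $f\colon \widetilde X\to X$ with $\widetilde X$ smooth and $\deg f$ prime to $\ell$ (Gabber/de Jong, whose availability forces the restriction $\ell\ne p$). A Riemann--Roch-type compatibility of $Q^\bullet$ with $f_*$, controlled by characteristic classes of the relative tangent complex of $f$, then transfers the vanishing on $\widetilde X$ down to $X$. The principal obstacle is exactly this last compatibility: the $Q^\bullet$ do not commute with $f_*$ on the nose, and the correction term must itself be integral so that its Bockstein vanishes on $[X]$. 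The other technical heart of the argument is the mod $p$ Wu formula used in the smooth step, which requires extracting from the $E_\infty$-algebra structure on motivic cochains the analogue of the classical identity $\mathrm{Sq}^\bullet(\mathrm{Thom~class})=(\mathrm{Wu~classes})\cdot(\mathrm{Thom~class})$, delicate precisely because the characteristic-$p$ coefficients interact with the characteristic-$p$ ground field.
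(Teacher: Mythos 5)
Your proposal is correct in substance, but your smooth-case argument is genuinely different from the paper's. The paper passes from $\tau$ to the Thom class of $N_{X/P}$ by deformation to the normal cone, then uses the Deligne--Sullivan embedding-bundle trick to manufacture a ``classifying map'' to a finite Grassmannian $Gr(c,N)$, observes that the Grassmannian's cohomology is concentrated in even first degree, and concludes $\beta\phi^s(\gamma)=0$ for the universal class by pure degree count (nothing lives in odd degree). Your route instead invokes the Wu formula $Q^\bullet\tau = i_!\bigl(itd_{Q^\bullet}(N_{X/P})\bigr)$, notes that the inverse Todd genus is a polynomial in Chern classes, hence lifts integrally, and kills the Bockstein by the derivation property. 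Both are valid; yours is shorter and more computational once one has the Wu formula, while the paper's is more general --- the Grassmannian argument requires only that $\phi$ preserve even first degree and that $A^{i,j}(\mathrm{Spec}\,k)$ vanish for $i\neq 0$, with no need for $\beta$ to be a derivation or to vanish on Chern classes, which is why the paper uses it to prove Theorem 1.2 for arbitrary theories. For the singular case with $\ell\neq p$ you follow the same alteration-plus-Riemann--Roch strategy as the paper, so there is no substantive difference there.

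Two small points of care. First, in the Leibniz computation $\beta(w_s\smile\mathrm{cl}[X])$, the class $w_s = itd_{Q^\bullet}(N_{X/P})$ lives on $X$, not on $P$, so the cup product is really the $H^*(X)$-module structure on $H^*_X(P)$ furnished by the Thom isomorphism $i_!$; the cleaner formulation is $\beta Q^\bullet\tau = \beta\,i_!(itd) = i_!(\beta\,itd) = 0$, which requires knowing $\beta$ commutes with $i_!$ (which the paper establishes and you implicitly use, but which itself depends on $\beta$ being a derivation vanishing on the Thom class). Second, your description of the Riemann--Roch correction term as ``characteristic classes of the relative tangent complex of $f$'' should read, in the Panin--Smirnov formalism the paper uses, as Todd genera of the ambient tangent bundles $TP$ and $TP'$; the two descriptions are morally equivalent but the latter is what the cited Riemann--Roch theorem actually delivers.
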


Theorem \ref{Theorem1.1} is a special case of Theorem \ref{Theorem1.2}, which applies to \textbf{generalized cohomology theories} in the sense of Panin--Smirnov (see Definition \ref{generalizedcohomologytheory}). Here, a cohomology theory $A$ is a contravariant functor that has a long exact sequence of a pair, satisfies \etale excision, and is $\mathbb{A}^1$-invariant. Moreover we require our theory $A$ to be equipped with a \textbf{proper pushforward} (Definition \ref{generalizedproperpushforwarddefinition}). It is then a theorem of Panin--Smirnov that this is equivalent to equipping $A$ with a notion of Chern classes, which is also equivalent to equipping $A$ with a notion of Thom class and Thom isomorphism.

\begin{theorem}\label{Theorem1.2}
    Let $A$ be a bigraded ring cohomology theory in the sense of Panin--Smirnov, valued in modules over $\Z/\ell$ where $\ell$ is any prime, and equipped with a proper pushforward. Suppose $A^{i,j}(\textnormal{Spec }k) = 0$ for $i \neq 0$. Let $\phi = \phi^\bullet$ be a ring operation with well-defined Todd genus that preserves even degrees in the first index, and let $\beta$ be a natural operation that sends bidegrees $(i,j)$ to $(i+1,j)$ that commutes with proper pushforward. Moreover, assume $\beta$ is a derivation that vanishes on the Todd genus of any bundle. Suppose $X \hookrightarrow P$ is a closed embedding of a smooth variety $X$ into a smooth variety $P$. The operation $\beta\phi^s$ vanishes on the dual class $\tau \in A^{2c,c}_{X}(P)$ where $c$ is the codimension of $X$ in $P$.
\end{theorem}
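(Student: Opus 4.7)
The plan is to reduce the vanishing to a short Panin--Smirnov Riemann--Roch computation combined with the two algebraic hypotheses imposed on $\beta$. Since $X$ and $P$ are smooth, the closed embedding $i \colon X \hookrightarrow P$ is regular of codimension $c$, with normal bundle $N := N_{X/P}$, and the dual class $\tau \in A^{2c,c}_{X}(P)$ is by construction the proper pushforward $i_*(1)$ of the unit class $1 \in A^{0,0}(X)$.

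First I would invoke the Riemann--Roch formula attached in the Panin--Smirnov setup to the ring operation $\phi$ together with its Todd genus: for any $x \in A^{*,*}(X)$,
\[
\phi(i_*(x)) \;=\; i_*\bigl(\mathrm{Td}_\phi(N) \cdot \phi(x)\bigr).
\]
Specializing to $x = 1$ and using that $\phi$ is a unital ring operation gives $\phi^\bullet(\tau) = i_*(\mathrm{Td}_\phi(N))$ as a total class in $A^{*,*}_X(P)$.

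Next I would apply $\beta$ to this identity and feed in its two hypotheses in succession. Because $\beta$ commutes with proper pushforward,
\[
\beta\phi^\bullet(\tau) \;=\; \beta\,i_*(\mathrm{Td}_\phi(N)) \;=\; i_*\bigl(\beta\,\mathrm{Td}_\phi(N)\bigr),
\]
and since $\beta$ vanishes on the Todd class of any bundle, the inner term is zero, so $\beta\phi^\bullet(\tau) = 0$. To extract a single operation I would then decompose $\phi^\bullet = \sum_s \phi^s$ according to the grading that $\phi$ preserves in the first index. Each class $\phi^s(\tau)$ lies in its own bidegree, and $\beta$ only shifts the first index by one, so the classes $\beta\phi^s(\tau)$ lie in pairwise distinct bidegrees of $A^{*,*}_X(P)$; the vanishing of the total sum therefore forces $\beta\phi^s(\tau) = 0$ for every $s$.

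The main obstacle I expect is organizational rather than substantive: one must verify that the Panin--Smirnov Riemann--Roch formula truly applies in the claimed generality, in particular that the Todd correction for a regular closed embedding is precisely the Todd class of the normal bundle and that both sides make sense with $x$ being the unit in the supported theory on $X$. Once this is in place, the identities $\beta \circ i_* = i_* \circ \beta$ and $\beta(\mathrm{Td}_\phi(N)) = 0$ are plugged in essentially formally; the derivation axiom for $\beta$ is not explicitly needed in the core manipulation, and would only come in if one tried to deduce the vanishing on $\mathrm{Td}_\phi(N)$ from the case of line bundles via the splitting principle rather than invoking it as a hypothesis.
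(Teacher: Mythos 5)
Your proof is correct in outline but takes a genuinely different route from the paper's, and there are two details in your execution that need fixing.

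The paper's proof of this statement (it appears as Theorem \ref{vanishingTheoremforSmooth}) is a Thom-style argument: deformation to the normal cone identifies $A_X(P)$ with $A_X(N_XP)$, the Deligne--Sullivan embedding-bundle construction of Section \ref{deligneSullivanEmbeddingBundletrick} produces a class $\gamma$ in the cohomology of a finite Grassmannian $Gr(c,N)$ that maps to $\tau$, and then the key input is that $A^{i,j}(Gr(c,N)) = 0$ for $i$ odd, so $\beta\phi^s(\gamma) = 0$ automatically because $\phi^s$ preserves even first index while $\beta$ moves it to an odd index. Your proof instead invokes the Panin--Smirnov--Levine relative Wu formula (Theorem \ref{smirnovEmbeddingTheorem}) directly. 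Both are valid, and yours bypasses the embedding-bundle and Grassmannian arguments entirely. The trade-off is that your argument consumes strictly more of the hypotheses of the theorem: the paper's proof of the smooth case does not use the well-defined Todd genus hypothesis, the derivation property of $\beta$, or its commutation with pushforward at all (those are only needed in the singular Theorem \ref{Aprioriresolutionobstructions}), so the paper's approach establishes the smooth-case vanishing under weaker assumptions.

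Two corrections to the execution. First, the closed-embedding Wu formula reads
\[
\phi(i_!(\alpha)) = i_!\bigl(\phi(\alpha) \cup itd_\phi(N_XP)\bigr),
\]
with the \emph{inverse} Todd genus $itd_\phi$, not the Todd genus $td_\phi$ that you wrote. Taking $\alpha = 1$ gives $\phi(\tau) = i_!(itd_\phi(N_XP))$. Second, and consequently, your claim that the derivation axiom for $\beta$ ``is not explicitly needed in the core manipulation'' is wrong: the hypothesis you are handed is $\beta(td_\phi(E)) = 0$, and you need $\beta(itd_\phi(E)) = 0$. These are not the same statement, and the passage between them is exactly where the derivation property enters. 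From $td_\phi(E)\cup itd_\phi(E) = 1$ and the Leibniz rule, $\beta(td_\phi(E))\cup itd_\phi(E) + td_\phi(E)\cup\beta(itd_\phi(E)) = \beta(1) = 0$; the first term vanishes by hypothesis, and $td_\phi(E)$ is a unit by the well-defined Todd genus assumption, forcing $\beta(itd_\phi(E)) = 0$. With these two points supplied, the rest of your argument — applying $\beta$, pushing it through $i_!$, and separating the graded pieces $\beta\phi^s(\tau)$ by bidegree — goes through.
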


\begin{theorem}\label{Theorem1.3}
    Let $A$ be a theory with the same hypotheses as in Theorem \ref{Theorem1.2}. Suppose $X \hookrightarrow P$ is a closed embedding of a (possibly \textbf{singular}) variety $X$ into a smooth variety $P$ over a field of characteristic $p > 0$. If $A$ is valued in $\Z/\ell$-modules, where $\ell \neq p$, then the operation $\beta\tilde{\phi}^s$ vanishes on the dual class $\tau \in A^{2c,c}_{X}(P)$, where $\tilde{\phi}$ is the operation associated to $\phi$ that commutes with proper pushforward. Moreover, if $A$ is valued in $\Z/p$-modules and $X$ admits an embedded resolution of singularities, then the operation $\beta\tilde{\phi}^s$ vanishes on the dual class $\tau \in A^{2c,c}_{X}(P)$.
\end{theorem}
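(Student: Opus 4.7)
My plan is to reduce both cases to the smooth situation already covered by Theorem \ref{Theorem1.2} by replacing $X \hookrightarrow P$ with a smooth embedding living above it and then pushing the vanishing statement forward.

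Consider first the resolution case, since the structure of the argument is clearest there. Let $f : \tilde{P} \to P$ be an embedded resolution, so $\tilde{P}$ is smooth, the reduced preimage $\tilde{X} := f^{-1}(X)_{\textnormal{red}}$ is smooth of the same codimension $c$ in $\tilde{P}$, and $f$ restricts to a proper birational map $\pi : \tilde{X} \to X$. Theorem \ref{Theorem1.2} applied to the smooth embedding $\tilde{X} \hookrightarrow \tilde{P}$ yields $\beta\phi^s(\tilde{\tau}) = 0$ for the dual class $\tilde{\tau} \in A^{2c,c}_{\tilde{X}}(\tilde{P})$. Since $\tilde{\phi}$ is obtained from $\phi$ by twisting with Todd classes---this being precisely how the pushforward-commuting variant is constructed in the Panin--Smirnov framework---and $\beta$ is a derivation annihilating all Todd classes, the same vanishing persists for $\beta\tilde{\phi}^s(\tilde{\tau})$. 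Combining the identity $f_*(\tilde{\tau}) = \tau$ with the assumed commutation of $\beta\tilde{\phi}^s$ with proper pushforward then gives $\beta\tilde{\phi}^s(\tau) = f_*(\beta\tilde{\phi}^s(\tilde{\tau})) = 0$.

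For the mod $\ell \neq p$ case, where resolution of singularities is not available in general, I would invoke Gabber's refinement of de Jong's alteration theorem to obtain a smooth $\tilde{X}$ and a proper, generically finite map $\pi : \tilde{X} \to X$ of generic degree $d$ prime to $\ell$. Embedding $\tilde{X} \hookrightarrow \Ps^N$ and setting $\tilde{P} := \Ps^N \times P$ with projection $q : \tilde{P} \to P$, the graph of $\pi$ realizes a smooth closed embedding $\tilde{X} \hookrightarrow \tilde{P}$ whose composition with $q$ is the given map $\tilde{X} \xrightarrow{\pi} X \hookrightarrow P$. Running the same smooth argument upstairs yields $\beta\tilde{\phi}^s(\tilde{\tau}) = 0$, and pushing down through $q$ together with the relation $q_*(\tilde{\tau}) = d \cdot \tau$ produces $d \cdot \beta\tilde{\phi}^s(\tau) = 0$; since $d$ is a unit in $\Z/\ell$, the conclusion follows.

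The main obstacle I expect is justifying the pushforward identities $f_*(\tilde{\tau}) = \tau$ and $q_*(\tilde{\tau}) = d \cdot \tau$ at the level of dual classes in cohomology with support. These are cohomological refinements of the classical cycle-theoretic equality $\pi_*[\tilde{X}] = d[X]$; to establish them cleanly in the Panin--Smirnov formalism, I would first verify them on the dense open locus over which $f$ is an isomorphism (respectively $\pi$ is finite \'etale of degree $d$), and then use the localization sequence to control the error class supported on the strictly lower-dimensional complement, while ensuring that the Thom-class normalization of the dual class and the projection formula across the $\Ps^N$ factor assemble the degrees and codimensions correctly.
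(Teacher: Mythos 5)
Your overall strategy matches the paper's almost exactly, so this is largely a confirmation with a couple of points worth flagging.

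For the mod $p$ resolution case, your argument is the same as the paper's: apply the smooth vanishing theorem upstairs, use the Leibniz rule for $\beta$ together with the vanishing of $\beta$ on the Todd class to pass from $\beta\phi^s$ to $\beta\tilde{\phi}^s$, then push forward using $f_!(\tilde\tau)=\tau$ and commutation of $\beta\tilde{\phi}^s$ with $f_!$. One imprecision: you set $\tilde X := f^{-1}(X)_{\textnormal{red}}$ and assert it is smooth of codimension $c$, which is not what an embedded resolution gives — the full reduced preimage typically contains exceptional divisors of codimension $1$ and is neither smooth nor pure of codimension $c$ unless $c=1$. You should take the \emph{strict transform} of $X$ (or better, simply adopt the paper's abstract formulation of an embedded resolution as a commuting square with $X,P$ smooth and $f,f'$ proper, $f'$ birational, which sidesteps the issue entirely).

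For the mod $\ell\neq p$ case, you differ from the paper in the choice of alteration input: you invoke Gabber's $\ell'$-alterations (degree prime to $\ell$), whereas the paper cites Temkin's refinement producing alterations of degree a power of $p$. Both routes land in the same place, since a unit in $\Z/\ell$ is all that is needed to divide off. Your version is more classical; the paper's reference has the slight advantage of handling all $\ell\neq p$ at once without re-choosing the alteration. More substantively, you are more careful than the paper here: the paper simply assumes an embedded alteration exists, whereas you actually construct it by embedding $\tilde X\hookrightarrow\Ps^N$, forming $\tilde P:=\Ps^N\times P$, and using the graph of $\pi$ to produce a closed embedding of smooth varieties; this is exactly the right way to make the pushforward $f_!$ from Definition~\ref{backwardsmap} available, and fills a small expository gap. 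Finally, your instinct that $f_!(\tilde\tau)=d\cdot\tau$ is the nontrivial ingredient is correct; in the paper's formalism this reduces, via twisted Poincar\'e duality (Theorem~\ref{relduality}), to the classical cycle-theoretic identity $\pi_*[\tilde X]=d[X]$, so the localization-sequence argument you outline is actually unnecessary — duality does all the work.
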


Theorem \ref{Theorem1.3} essentially defines a priori cohomological obstructions to a projective variety $X$ admitting an embedded resolution of singularities. If $X$ admits a resolution, then certain mod $p$ cohomological operations must vanish. Thus, in theory, one could construct an example of a singular variety $X$ where these operations are non-zero. As of this writing, and to the author's knowledge, the problem of resolution of singularities in positive characteristic is still open; if proven true, then Theorems \ref{Theorem1.1}, \ref{Theorem1.2}, and \ref{Theorem1.3} can be interpreted as vanishing theorems.

However, the only examples we have either already vanish for reasons other than resolution of singularities, or do not satisfy the hypotheses of the theorems: the mod $\ell$ operations on motivic and \etale cohomology vanish due to alterations, and we do not know how to push forward the mod $p$ operations.

Our motivation begins with the work of Thom \cite{thom1952espaces}. In private communication, Sullivan informed the author of his belief that Thom, after his work on Steenrod's problem of representing homology classes by manifolds, was motivated to formulate cohomological obstructions to resolving the singularities of a compact complex algebraic variety; Sullivan explains this belief explicitly in the article \cite{sullivan2004rene}. The cohomology obstructions are defined as follows: first, for a given compact complex algebraic variety $X$, one topologically embeds $X$ into a large euclidean space $\mathbb{R}^N$. There are mod $\ell$ Steenrod operations $Q^\bullet$ defined on the cohomology of a pair $(\mathbb{R}^N, \mathbb{R}^N-X)$, for any prime $\ell$. By Alexander--Lefschetz--Poincar\'e duality, these cohomology operations dualize to mod $\ell$ operations $Q_\bullet$ on the (Borel--Moore) homology of $X$. These operations do not depend on the embedding or dimension of euclidean space. Moreover, these homology operations commute with proper pushforwards. Sullivan then explained the following argument, which he attributed to Thom. We abuse notation and let $\beta$ denote the dualized Bockstein operation on homology.

\begin{proposition}\label{mainnovelty}
    Let $X$ be a compact complex algebraic variety. Then the operations of odd degree $\beta Q_\bullet$ vanish on the fundamental class $[X]$.
\end{proposition}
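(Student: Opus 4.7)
The plan is to reduce to the smooth case via Hironaka's resolution of singularities (available over $\C$), and then to compute directly using Wu's formula together with the complex structure of $X$.

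First, applying Hironaka, I would choose a resolution $\pi \colon \tilde{X} \to X$ with $\tilde{X}$ smooth, compact, and complex projective. Because $\pi$ is proper and birational, $\pi_*[\tilde{X}] = [X]$ in mod $\ell$ Borel--Moore homology. Using the hypothesis that $Q_\bullet$ and $\beta$ commute with proper pushforward, one obtains
\[
\beta Q_s[X] \;=\; \beta Q_s\,\pi_*[\tilde{X}] \;=\; \pi_*\bigl(\beta Q_s [\tilde{X}]\bigr),
\]
so it suffices to establish $\beta Q_s[\tilde{X}] = 0$ for smooth $\tilde{X}$.

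Next, I would fix a topological embedding $\tilde{X} \hookrightarrow \R^N$ and let $\nu$ denote its normal bundle. Under Alexander--Lefschetz--Poincar\'e duality, the fundamental class $[\tilde{X}]$ in mod $\ell$ Borel--Moore homology corresponds to the Thom class $\tau \in H^{N-2n}(\R^N,\R^N-\tilde{X};\Z/\ell)$, and the dual operation $\beta Q_s$ on homology corresponds to $\beta Q^s$ on relative cohomology, reducing the claim to $\beta Q^s(\tau)=0$. Since $T\tilde{X}$ is a complex vector bundle and $T\tilde{X}\oplus\nu$ is stably trivial, $\nu$ is orientable; thus $\tau$ is the mod $\ell$ reduction of an integral Thom class and $\beta\tau=0$.

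Wu's formula then gives $Q^s(\tau)=V_s(\nu)\cdot\tau$ for a characteristic class $V_s(\nu)$ of the normal bundle. The mod $\ell$ characteristic classes of $T\tilde{X}$ arise as reductions of integral Chern classes, and the relation $w(T\tilde{X})\cdot w(\nu)=1$ (and its mod $\ell$ analogue for $\ell$ odd) expresses the characteristic classes of $\nu$ as polynomials in those of $T\tilde{X}$. In particular $V_s(\nu)$ itself lifts to integer cohomology, so $\beta V_s(\nu)=0$. Applying the derivation property,
\[
\beta Q^s(\tau) \;=\; \beta(V_s(\nu))\cdot\tau + (-1)^{|V_s(\nu)|}V_s(\nu)\cdot\beta(\tau) \;=\; 0.
\]

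The main obstacle is the last step: for $\ell=2$ the Wu formula $Sq^i(\tau)=w_i(\nu)\cdot\tau$ makes the computation transparent, but for $\ell$ odd one must identify the characteristic classes of the underlying real bundle appearing in $P^s(\tau)$ and verify that they lift from Chern classes of the stable complex structure on $\nu$. The resolution reduction and the formal Bockstein manipulation are routine; the technical content lies in establishing this integral lifting statement uniformly across all primes $\ell$.
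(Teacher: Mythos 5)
Your proof is correct and takes a genuinely different route from the paper's proof of the smooth case; the resolution reduction and duality setup are identical, but the key vanishing argument differs. The paper passes from the Thom space of the normal bundle $\nu$ via the classifying map to the universal Thom space over the infinite complex Grassmannian $Gr(c)$, and observes that $H^*(Gr(c);\Z/\ell)$ (and hence the universal Thom space cohomology) is concentrated in even degrees, so that any odd-degree operation kills the universal Thom class; naturality then pulls this back to $\tau$. You instead compute directly: by the Wu formula $Q^s(\tau) = V_s(\nu) \cup \tau$, and since the stable normal bundle is complex, both $V_s(\nu)$ and $\tau$ are mod $\ell$ reductions of integral classes, so $\beta$ kills their product.

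The ``obstacle'' you flag at the end --- identifying $V_s(\nu)$ at odd primes and verifying it lifts integrally --- is not a real gap, only an unwound detail. The mod $\ell$ Wu class of an orientable bundle is a polynomial in mod $\ell$ Pontryagin classes (and the Euler class when the real rank is even), all of which are reductions of integral classes; for a stably complex bundle the splitting principle gives $q(\nu)=\prod_i(1+x_i^{\ell-1})$ with $x_i$ the Chern roots, and $V_s(\nu)$ is a symmetric polynomial in the $x_i^{\ell-1}$, hence an integral polynomial in Chern classes. Stability of Wu classes handles the passage from $\nu\oplus\underline{\R}^k$ (complex) to $\nu$. Combined with the integral lift of $\tau$ from orientability, the derivation property finishes the argument; alternatively one can note that $V_s(\nu)\cup\tau$ is itself the mod $\ell$ reduction of a single integral class, bypassing the Leibniz rule entirely.

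Comparing the two approaches: the paper's Grassmannian route requires only the even-degree concentration of $H^*(Gr(c))$, which follows from a cell decomposition or the projective bundle formula, and never needs the explicit Wu classes --- a real advantage when one wants to generalize to theories where explicit formulas may be unknown. It is also the route the paper exports to the algebraic setting via the Deligne--Sullivan embedding-bundle trick, precisely because classifying maps do not exist in the category of varieties. Your Wu-formula route is more concrete and self-contained in the topological case; its algebraic analogue would instead rest on the relative Wu formula (Theorem~\ref{WuformulaforSimplicialSteenrodOps}) that the paper also proves, and could in principle be adapted the same way.
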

\begin{proof}
    We first prove the case when $X$ is smooth. We smoothly embed $X$ into a large euclidean space $\mathbb{R}^n$. Since $X$ is complex, its normal bundle is stably almost complex, so $N_X\mathbb{R}^n \oplus \mathbb{R}^k$ is a complex bundle for some $k$. Embed $X$ into the larger euclidean space $\mathbb{R}^N$ where $N = n+k$, so that its normal bundle is complex. By definition of the operations, it suffices to show $\beta Q^\bullet$ vanishes on the dual class $\tau$ in the relative cohomology $H^*(\mathbb{R}^N,\mathbb{R}^N-X,\Z/\ell)$. We have a natural isomorphism $H^*(\mathbb{R}^N,\mathbb{R}^N-X,\Z/\ell) \cong H^*(N_X\mathbb{R}^N,N_X\mathbb{R}^N-0,\Z/\ell)$ where $N_X\mathbb{R}^N$ denotes the normal bundle of $X$ with respect to our embedding. In this equivalence, the dual class $\tau$ is identified with the Thom class of the normal bundle, so we will use the same notation for both.

    The normal bundle $N_X\mathbb{R}^N$ induces a classifying map $X \rightarrow Gr(c)$ where $Gr(c)$ is the infinite Grassmannian of complex $c$-dimensional subspaces, where $c$ is the complex codimension of $X$ in $\mathbb{R}^N$.

    \begin{center}
\begin{tikzcd}
N_XP \arrow[r] \arrow[d]
& \gamma^c \arrow[d] \\
X \arrow[r]
& Gr(c)
\end{tikzcd}
\end{center}
    where $\gamma^c$ is the tautological bundle of complex $c$-dimensional subspaces. The classifying map induces a map on the Thom spaces, and so a map $H^*(\gamma^c, \gamma^c-0,\Z/\ell) \rightarrow H^*(N_X\mathbb{R}^N,N_X\mathbb{R}^N-0,\Z/\ell)$, and sends a cohomology class $\gamma$ in cohomology of the the universal Thom space to the Thom class $\tau$ of the normal bundle, by functoriality of the Thom isomorphism. The cohomology of the complex Grassmannian $Gr(c)$ is concentrated in even degrees, and so operations $\beta Q^\bullet$ vanish on $\gamma$. By naturality, they then vanish on the Thom class $\tau$, and so the dual homology operations $\beta Q_\bullet$ vanish on the fundamental class $[X]$.

    Suppose now $X$ is singular. Then $X$ has a resolution of singularities $Y$ by Hironaka \cite{hironaka1964resolution}, where $Y$ is smooth and the map $Y \rightarrow X$ is proper and birational. In particular, the fundamental class $[Y]$ is sent to $[X]$. The odd degree operations vanish on $[Y]$ by the argument above, and they commute with proper pushforward, so they also vanish on $[X]$.
\end{proof}

If $X$ is a singular variety that admits a resolution, then the odd degree homology operations must vanish on its fundamental class. Thus, these operations are ``obstructions'' in the sense that, finding an example of a variety where these operations are non-zero would give a variety that had no resolution. They are ``a priori'' as they do not require resolution of singularities to be defined.

The idea of this paper is to translate this differential topology argument into algebraic geometry, over a field of characteristic $p > 0$. The central difficulty was, and still may be, finding the correct cohomology theory suitable for this program. Initially, Sullivan considered using \etale cohomology \cite{sullivan2004rene} but learned from de Jong \cite{de1996smoothness} that a priori mod $\ell \neq p$ obstructions would vanish. The argument then required a suitable $p$-adic cohomology theory, which captured $p$-torsion information. There is a large amount of literature in the field exploring the tension between a ``nice" $p$-adic cohomology theory, Steenrod operations, and resolution of singularities (for example, see the articles \cite{haution2012reduced}, \cite{ertl2021integral}, \cite{abe2021integral}, \cite{merici2022motivic}, \cite{niziol2006p}).

By work of Joshua \cite{joshua2001motivic} and Kriz--May \cite{kriz1995operads}, there are Steenrod operations $Q^\bullet$ on motivic cohomology mod $\ell$ for smooth varieties, where $\ell$ is any prime (including the characteristic). The operations are derived from the product structure on Bloch's cycle complexes, which is only well defined up to quasi-isomorphism. This gives the cycle complexes an \Einf-algebra structure, which then inherits generalized Steenrod operations by work of May \cite{may1970general}. We emphasize here that these Steenrod operations are \textbf{not} the same as those defined by Voevodsky \cite{voevodsky2003reduced} or Primozic \cite{primozic2020motivic}, as those operations vanish on fundamental classes for trivial degree reasons (see Proposition \ref{motivicOpsVanishForDegreeReasons}).

Once the operations are defined for cohomology with support however, there is now another nuance; we thank Patrick Brosnan for informing us of this crucial point. In classical topology, one can define the Borel--Moore homology groups of a space $X$ as the Alexander--Lefschetz--Poincar\'e dual of the cohomology of a pair $(\mathbb{R}^N,\mathbb{R}^N-X)$. The groups are then proved to be independent of the choice of embedding. The cohomology of a pair admits Steenrod operations; however, the dualized homology operations \textbf{do} depend on the embedding and ambient space. In particular, the operations \textbf{do not} commute with proper pushforward. At best, there is a Riemann--Roch type formula that describes their failure to commute with pushforward, with error terms arising from characteristic classes of the tangent bundle of the ambient space. We call these error terms a \textbf{Todd genus} (see Definition \ref{toddgenusdefinition}). Thus, if one embeds into euclidean space, the error terms all vanish. The work of Panin--Smirnov \cite{panin2000oriented}, \cite{panin2003oriented}, \cite{panin2004riemann}, \cite{panin2009oriented}, \cite{smirnov2007riemann} gives a general Riemann--Roch type formula for generalized cohomology theories in the sense above, given a notion of proper pushforward on cohomology (see Definition \ref{generalizedproperpushforwarddefinition}) and a suitable ring operation $\phi$ (see Definition \ref{ringcohomologydefinition}). Given such a Riemann--Roch type formula, one can twist the operation by the Todd genus, and we can define new operations $\tilde{\phi}$ that do commute with pushforward. It is through these Riemann--Roch type formulas that the geometry of the ambient space is crucial to our homology operations. This is the content of Section \ref{RiemannRochTypeFormulasSection}.

This paper is then about mimicking the argument of Thom in Proposition \ref{mainnovelty}. Moreover, once this is done, one can use alterations of de Jong \cite{de1996smoothness} with appropriate degree \cite{temkin2017tame} to extend the vanishing results to singular varieties, so long as the coefficients are valued mod $\ell \neq p$. The proofs of the Theorems \ref{Theorem1.1}, \ref{Theorem1.2}, and \ref{Theorem1.3} are the content of Sections \ref{embeddingBundlesSection} and \ref{ThomObsVanish}.

As mentioned prior, we do not know how to push forward the mod $p$ motivic operations, as there is no Riemann--Roch type formula. Although there is no general Riemann--Roch type formula for the mod $p$ operations for general proper pushforwards, we are able to obtain one for the special case of closed embeddings of smooth varieties. This is proven as Theorem \ref{WuformulaforSimplicialSteenrodOps}.

\begin{theorem} (Wu formula)
    Let $i: X \hookrightarrow P$ be a closed embedding of smooth varieties, and let $i_!$ denote the proper pushforward on mod $p$ motivic cohomology. We obtain the formula
    \begin{equation*}
        Q^\bullet i_!(\alpha) = i_!(Q^\bullet(\alpha) \smile c_{\textnormal{top}}^{p-1}(N_XP))
    \end{equation*}
    for $\alpha \in H^*(X,\Z/p(*))$, where $Q^\bullet$ are the mod $p$ simplicial operations on mod $p$ motivic cohomology, and $c_{\textnormal{top}}(N_XP)$ is the top Chern class of the normal bundle of $X$ in $P$.
\end{theorem}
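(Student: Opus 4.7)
The plan is the classical one for Wu-type formulas: isolate $Q^\bullet$ applied to the Thom class, reduce to a universal computation via naturality, and then conclude by the projection formula. First, using the deformation to the normal cone together with $\mathbb{A}^1$-invariance and \'etale excision, I would identify $i_!(\alpha)$ under the Thom isomorphism with $\tau \cup \pi^*(\alpha)$, where $\tau \in H^{2c,c}_X(P,\Z/p(c))$ is the dual/Thom class and $\pi: N_XP \to X$ is the bundle projection. This reduces the problem to the case in which $P$ is the total space of the normal bundle and $i$ is the zero section.

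Next, apply the Cartan formula, valid for the simplicial Steenrod operations since they arise from an $E_\infty$-structure on Bloch's cycle complex, to obtain
\begin{equation*}
    Q^\bullet i_!(\alpha) \;=\; Q^\bullet(\tau) \cup Q^\bullet(\pi^*\alpha) \;=\; Q^\bullet(\tau) \cup \pi^* Q^\bullet(\alpha),
\end{equation*}
using also that the operations commute with pullback. The heart of the argument is now to compute $Q^\bullet(\tau)$. By the Deligne--Sullivan embedding bundle observation of Section \ref{deligneSullivanEmbeddingBundletrick}, naturality reduces this to a computation on a linearly embedded Grassmannian, and then the splitting principle further reduces to $N_XP = L_1 \oplus \cdots \oplus L_c$. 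For a line bundle, the Thom class equals $c_1(L)$ (after Thom), and degree considerations on bidegree $(2,1)$ force the only nonzero operations to be $Q^0(c_1(L)) = c_1(L)$ and the top one giving $c_1(L)^p$, so $Q^\bullet(c_1(L)) = c_1(L) + c_1(L)^p$.

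I would then reassemble: by Cartan on the split normal bundle, $Q^\bullet(\tau) = \prod_j Q^\bullet(c_1(L_j))$, and I would combine this with the self-intersection identity $\tau^k = i_!(c_{\textnormal{top}}(N_XP)^{k-1})$ for $k \geq 1$ to absorb every repeated factor of $\tau$ into a product of top Chern classes; the only contribution that survives up to this absorption is the top symmetric function $\prod_j c_1(L_j)^{p-1} = c_{\textnormal{top}}(N_XP)^{p-1}$, yielding $Q^\bullet(\tau) = \tau \cdot c_{\textnormal{top}}^{p-1}(N_XP)$. Since the normal bundle extends to a tubular neighborhood, $c_{\textnormal{top}}^{p-1}(N_XP)$ is the pullback of a class $\beta$ on $P$, and the projection formula gives
\begin{equation*}
    Q^\bullet(\tau)\cup \pi^*Q^\bullet(\alpha) \;=\; i_!\bigl(c_{\textnormal{top}}^{p-1}(N_XP)\bigr)\cup \pi^*Q^\bullet(\alpha) \;=\; i_!\bigl(Q^\bullet(\alpha)\cup c_{\textnormal{top}}^{p-1}(N_XP)\bigr),
\end{equation*}
which is the desired formula.

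The main obstacle I anticipate is the universal computation of $Q^\bullet(\tau)$: one must verify that the simplicial Steenrod operations on the cycle complex satisfy the Cartan formula and commute with the Thom isomorphism in the expected way, despite the fact that the product on Bloch's cycle complex is only well-defined up to quasi-isomorphism and the Steenrod operations live on the $E_\infty$-side. Making the self-intersection bookkeeping rigorous within cohomology with supports, so that the expansion $\prod_j(1 + c_1(L_j)^{p-1})$ genuinely collapses to the single class $c_{\textnormal{top}}^{p-1}(N_XP)$ after passing through $i_!$, is the delicate step.
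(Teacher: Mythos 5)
Your overall strategy (deformation to the normal cone, Cartan formula, compute $Q^\bullet(\tau)$ via the splitting principle, then project back) is the same as the paper's. The genuine gap is the claim that $Q^0(c_1(L)) = c_1(L)$ so that $Q^\bullet(c_1(L)) = c_1(L) + c_1(L)^p$. This is the mod $\ell \neq p$ formula and it is \emph{false} mod $p$: the operation $Q^0$ sends bidegree $(2,1)$ to bidegree $(2,p)$, so $Q^0(c_1(L))$ lives in the group $H^2(X,\Z/p(p))$, a different Tate twist than $c_1(L) \in H^2(X,\Z/p(1))$. Mod $p$ these twists cannot be identified (there is no Bott element), and for the universal class $u$ over $\mathbb{P}^\infty$ the target group $H^2(\mathbb{P}^\infty,\Z/p(p))$ vanishes by the Geisser--Levine theorem (Proposition \ref{inverseToddGenusofSimplicialOperations} in the paper). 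Hence $Q^0(u) = 0$ and the correct formula is $Q^\bullet(u) = u^p$. Your ``absorption via self-intersection'' does not repair this: expanding $\prod_j(\tau_j + \tau_j^p)$ and using $\tau_j^p = \tau_j \cdot c_1(L_j)^{p-1}$ gives $\tau \cdot \prod_j(1 + c_1(L_j)^{p-1})$, in which the cross terms do not vanish and are certainly not equal to $\tau \cdot c_{\textnormal{top}}^{p-1}(N_XP)$. The self-intersection formula rewrites powers of $\tau$ but kills nothing.

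Once you use the correct $Q^\bullet(u) = u^p$, the computation becomes direct: $itd_{Q^\bullet}(u) = u^{p-1}$, so $itd_{Q^\bullet}(L_1\oplus\cdots\oplus L_c) = \prod_j c_1(L_j)^{p-1} = c_c(N_XP)^{p-1}$, and $Q^\bullet(\tau) = \tau \cup \pi^*(c_{\textnormal{top}}^{p-1}(N_XP))$ with no absorption required. Two minor remarks: the detour through the Deligne--Sullivan embedding bundle and the Grassmannian is unnecessary for this formula — the splitting principle alone suffices, since you only need the relation $Q^\bullet\tau = \tau\cup\pi^*(itd_{Q^\bullet}(N_XP))$ and not the Grassmannian's even-degree cohomology — and the appeal to a ``tubular neighborhood'' in the last step is not available in algebraic geometry; the projection formula $i_!(\alpha\cup i^*\beta) = i_!(\alpha)\cup\beta$ and the deformation to the normal cone already give what you need without it.
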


Finally, we would like to say that our approach is na\"ive, in that we have given a set of axioms sufficient but not necessary for Thom's argument to work. For example, one could relax the homotopy invariance axiom and still obtain many of the formal results needed for the argument. 


We also want to note the similiarity in spirit of our work with that of Fulton (Example 19.1.8 in \cite{fulton2013intersection}), Kawai \cite{kawai1977note}, Benoist \cite{benoist2022steenrod}, Quick \cite{quick2011torsion}, and Atiyah--Hirzebruch \cite{atiyah1961cohomologie} who all utilize Steenrod operations as obstructions to cohomology classes being represented by subvarieties (for example, disproving the integral Hodge conjecture).

\textbf{Structure of the paper.} The notion of generalized cohomology theory is given in section \ref{AxiomaticApproach}, and the Riemann--Roch type theorems of Panin--Smirnov are discussed in Section \ref{RiemannRochTypeFormulasSection}. Section \ref{embeddingBundlesSection} is a brief digression on stacks and the cohomology of the classifying stack of vector bundles, and section \ref{ThomObsVanish} brings all aforementioned sections together to generalize the Thom argument for generalized cohomology theories. It also proves the Thom argument for certain Steenrod operations defined on motivic cohomology. Section \ref{alterationsSection} shows the mod $\ell \neq p$ operations all vanish.

\textbf{Notation.} Throughout this paper, we fix an algebraically closed field $k$ of characteristic $p > 0$. A variety $X$ is an integral, separated scheme over $k$ of finite type. All varieties are quasiprojective unless specified otherwise. All products and dimensions of spaces are over the field $k$. Likewise, all tensor products of abelian sheaves are over $\Z$ or $\mathbb{Z}/\ell$, where $\ell$ is any prime. 

\textbf{Acknowledgements.} We would like to thank Dennis Sullivan, who originally allowed the author to work on this project as his thesis, and for telling us the Thom argument. We would like to also thank Thomas Geisser and Marc Levine for patiently answering the author's basic questions on higher Chow groups, Patrick Brosnan for teaching us that the dual Steenrod homology operations on Borel--Moore homology do not commute with proper pushforward, Peter May for helping the author understand some nuances about \Einf-algebras, and Spencer Bloch for clarifying some points in his private letter. We also want to thank Benson Farb for helpful comments on an initial draft of this paper. The author would also like to thank Toni Annala, Connor Dube, Kevin Lin, Akhil Mathew, and Callum Sutton for teaching him about stacks for the second draft of this paper. We also thank the anonymous referee for comments drastically improving the exposition.

\section{Generalized cohomology, proper pushforwards, and ring operations}\label{Steenrod}

\subsection{Cohomology theories and integrations with support}\label{AxiomaticApproach}
In this section we introduce the axioms of a generalized cohomology theory in the sense of Panin--Smirnov. Their work concerns ring cohomology theories equipped with an \textit{orientation} (i.e., a Thom class). They show that this is equivalent to equipping a given ring cohomology theory with a \textit{Chern structure} (i.e., Chern classes), and also equivalent to equipping it with a \textit{transfer} (i.e., a proper pushforward) or a \textit{trace} (i.e., a Thom isomorphism). Given a ring cohomology theory equipped with any one of these equivalent notions, and given a natural ring operation on the cohomology theory, they are then able to deduce Grothendieck--Riemann--Roch type formulas describing the failure of commutativity for the ring operation with proper pushforward. Most of the material in this section can be found in the papers of Panin \cite{panin2009oriented}, Panin--Smirnov \cite{panin2004riemann}, Smirnov \cite{smirnov2007riemann}, and Levine \cite{levine2008oriented}.

\subsubsection{Cohomology and pushforward \`a la Panin, Smirnov, Levine}

We give the definition of a ring cohomology theory, following Panin \cite{panin2009oriented}.
\begin{definition}\label{generalizedcohomologytheory} Let \textbf{SmOp} denote the category of pairs $(X,U)$ where both $X$ is a smooth variety and $U$ is open in $X$. Morphisms are maps of pairs. The category \textbf{Sm} of smooth varieties is a full subcategory of \textbf{SmOp} by sending $X$ to the pair $(X,\varnothing)$.

A \textbf{cohomology theory} is a contravariant functor $\textbf{SmOp} \xrightarrow{A} \textbf{Ab}$ with natural maps $A(U) \xrightarrow{\partial} A(X,U)$ satisfying the following properties:

\begin{enumerate}
    \item (Localization) For a pair $(X,U) \in \textbf{SmOp}$, the sequence 
    \begin{equation*}
        A(X) \xrightarrow{j^*} A(U) \xrightarrow{\partial_{(X,U)}} A(X,U) \xrightarrow{i^*} A(X) \xrightarrow{j^*} A(U) 
    \end{equation*}
    is exact, where $j: U \hookrightarrow X$ and $i: (X,\varnothing) \hookrightarrow (X,U)$ are the natural inclusions.

    \item (Excision) The map $A(X,U) \rightarrow A(X',U')$ induced by a morphism $e: (X',U') \rightarrow (X,U)$ is an isomorphism, if the map $e: X' \rightarrow X$ is \'etale, and for $Z = X-U$ and $Z' = X'-U'$, one has $e^{-1}(Z) = Z'$ and $e: Z' \rightarrow Z$ is an isomorphism.

    \item (Homotopy) The map $A(X) \rightarrow A(X \times \A^1)$ induced by the projection $X \times \A^{1} \rightarrow X$ is an isomorphism.
\end{enumerate}
The operator $\partial$ is the \textbf{boundary operator}. A morphism of cohomology theories $\phi: (A,\partial^A) \rightarrow (B,\partial^B)$ is a natural transformation $\phi: A \rightarrow B$ such that for every pair $(X,U) \in \textbf{SmOp}$, the boundary operators commute with the natural transformation maps. That is, we have $\partial^B\circ \phi_U = \phi_{(X,U)} \circ \partial^A$.

We also write $A_Z(X)$ for $A(X,U)$ where $U = X-Z$ for $Z$ a closed subset of $X$, and call $A_Z(X)$ the cohomology of $X$ with support on $Z$.
\end{definition}


\begin{definition}\label{ringcohomologydefinition}
    Let $P = (X,U)$ and $Q = (Y,V)$ be in \textbf{SmOp}. We define their product to be $P \times Q = (X \times Y, X\times V \smile U \times Y) \in \textbf{SmOp}$.

    A \textbf{ring cohomology theory} $A$ is a cohomology theory such that, for every $P, Q \in \textbf{SmOp}$, there is a natural bilinear morphism $\times: A(P) \times A(Q) \rightarrow A(P \times Q)$ which we call the \textbf{external product} functorial in both variables with the following properties:

    \begin{enumerate}
        \item (Associativity) $(a\times b) \times c = a \times (b\times c) \in A(P \times Q \times R)$ for $a \in A(P)$, $b\in A(Q)$, and $c \in A(R)$;

        \item (Unit) There is an element $1 \in A(\textnormal{Spec } k)$ such that for any pair $P \in \textbf{SmOp}$ and any $a \in A(P)$, one has $1 \times a = a = a \times 1 \in A(P)$;

        \item (Partial Leibniz) $\partial_{P \times Y}(a\times b) = \partial_{P}(a) \times b \in A(X \times Y, U \times Y)$ for a pair $P = (X,U) \in \textbf{SmOp}$, a smooth variety $Y$, and elements $a\in A(U), b\in A(Y)$.
    \end{enumerate}

    Given an external product, one can define cup products $A_Z(X) \times A_{Z'}(X) \rightarrow A_{Z\cap Z'}(X)$ by defining $a \smile b = \Delta^*(a \times b)$, where $\Delta: (X, U \smile V) \hookrightarrow (X \times X, X \times V \smile U \times X)$ is the diagonal. If $p: X \rightarrow \textnormal{Spec } k$ is the structure map, then $p^*(1) \in A(X)$ is the unit for the cup products $A_Z(X) \times A(X) \rightarrow A_Z(X)$ and $A(X) \times A_Z(X) \rightarrow A_Z(X)$. A partial Leibniz rule for cup product also holds: $\partial(a \smile j^*b) = \partial a \smile j^*b$ for $a \in A(U), b\in A(X)$.

    Moreover, given a cup product, one can construct an external product $a\times b = \pi_X^*(a) \smile \pi_Y^*(b)$ for $a \in A(X,U)$ and $b \in A(Y,V)$, where $\pi_X$ and $\pi_Y$ are the natural projections from $X \times Y$. This construction is inverse to the construction above.
\end{definition}

The two primary examples to keep in mind are motivic cohomology and mod $\ell \neq p$ \etale cohomology. Other examples of ring cohomology theories can be found in the paper of Panin \cite{panin2003oriented} such as classical singular cohomology over $\mathbb{C}$, algebraic $K$-theory, algebraic cobordism theory, any representable theory, etc.


\begin{definition}
    Suppose we have a fiber product diagram
\begin{center}
\begin{tikzcd}
X' \arrow[r, "i'"] \arrow[d, "f'"]
& X \arrow[d, "f"] \\
Y' \arrow[r, "i"]
& Y
\end{tikzcd}
\end{center}
    where all varieties are smooth, $Y' \xhookrightarrow{i} Y$ is a closed embedding, with normal bundles $N := N_{Y/Y'}$ and $N' := N_{X/X'}$. The square is \textbf{transversal} if the canonical morphism $N' \rightarrow f'^*N$ is an isomorphism.
\end{definition}

We now present the definition of an integration for a ring cohomology theory, following Levine \cite{levine2008oriented} and Panin \cite{panin2009oriented}. Levine works in the setting where his cohomology theory is $\Z/2$-graded; however, his results hold for bigraded theories with the assumption that integrations shift bi-degrees $f_!: A^{p,q}(X) \rightarrow A^{p+2d,q+d}(Y)$, where $d = \textnormal{dim }(Y) - \textnormal{dim }(X)$. See Remark 1.7 in \cite{levine2008oriented}.

\begin{definition}\label{generalizedproperpushforwarddefinition}
    Let $A$ be a ring cohomology theory. A \textbf{proper pushforward} (or \textbf{integration} or \textbf{trace structure} or \textbf{orientation} or \textbf{transfer}) is an assignment $(f: X \rightarrow Y) \mapsto f_!: A(X) \rightarrow A(Y)$ where $f_!$ is a two-sided $A(X)$-module map, and $f$ is a projective morphism between smooth varieties $X$ and $Y$, such that the following properties are satisfied: 

    \begin{enumerate}
        \item $(g\circ f)_! = g_!\circ f_!$ for projective morphisms $f: X \rightarrow Y$ and $g: Y \rightarrow Z$;
        \item for every transversal square, the following diagram commutes:
        \begin{center}
        \begin{tikzcd}
        A(X') \arrow[r, "i'_!"] 
        & A(X) \\
        A(Y') \arrow[r, "i_!"] \arrow[u, "f'^*"]
        & A(Y);  \arrow[u, "f^*"]
        \end{tikzcd}
        \end{center}

        \item for any morphism of smooth varieties $f: X \rightarrow Y$, the following diagram commutes:
        \begin{center}
        \begin{tikzcd}
        A(\Ps^n \times X) \arrow[r, "(\pi_X)_!"] 
        & A(X) \\
        A(\Ps^n \times Y) \arrow[r, "(\pi_Y)_!"] \arrow[u, "(id\times f)^*"]
        & A(Y),  \arrow[u, "f^*"]
        \end{tikzcd}
        \end{center}
        where $\pi_X: \Ps^n \times X \rightarrow X$ and $\pi_Y: \Ps^n \times Y \rightarrow Y$ are the natural projections;

        \item for any smooth variety $X$, one has $id_! = id^*$ on $A(X)$;

        \item for any closed embeddings of smooth varieties $X \xhookrightarrow{i} P$, with complementary open embedding $P-X \xhookrightarrow{j} P$, the sequence $A(X) \xrightarrow{i_!} A(P) \xrightarrow{j^*} A(P-X)$ is exact.
    \end{enumerate}
\end{definition}


For sake of completion, we also present the notion of an integration with support, due to Levine \cite{levine2008oriented}, following Panin \cite{panin2004riemann}. This is essentially Definition \ref{generalizedproperpushforwarddefinition}, with supports added everywhere.

\begin{definition}
    Let $A$ be a ring cohomology theory. We define a new category \textbf{SmOp'} where objects are pairs $(X,U)$ where $U$ is open in smooth $X$, and morphisms $f: (X,U) \rightarrow (Y,V)$ are projective maps $f: X \rightarrow Y$ such that $f(X-U) \subset Y-V$. Equivalently, we can consider pairs $(X,Z)$ where $Z$ is a closed subset of $X$ (called the support), and consider projective maps $f: X \rightarrow Y$ that send supports to supports.
    
    An \textbf{integration with supports} is an assignment to every morphism $f: (X,Z) \rightarrow (Y,W)$ in \textbf{SmOp'} a map $f_!: A_Z(X) \rightarrow A_W(Y)$ satisfying the following properties:

    \begin{enumerate}
        \item $(g\circ f)_! = g_!\circ f_!$ for composable morphisms;
        \item for $f: (X,Z) \rightarrow (Y,W)$ in \textbf{SmOp'}, and $S$ a closed subset of $Y$, $f_!$ is a $A_S(Y)$-module map; i.e., the diagram
        \begin{center}
        \begin{tikzcd}
        A_S(Y) \otimes A_Z(X)  \arrow[r, "f^*\smile"] \arrow[d, "id \otimes f_!"]
        & A_{Z\cap f^{-1}(S)}(X) \arrow[d, "f_!"] \\
        A_S(Y) \otimes A_W(Y) \arrow[r, "\smile"]
        & A_{S\cap W}(Y)
        \end{tikzcd}
        \end{center}
        commutes;
        \item let $i: (X,Z) \rightarrow (Y,W)$ be in \textbf{SmOp'} such that $i: X \rightarrow Y$ is a closed embedding, and let $g: (Y',W') \rightarrow (Y,W)$ be a morphism in \textbf{SmOp}. Let $X' = X \times_Y Y'$, $g': X' \rightarrow X$ and $i': X' \rightarrow Y'$ be the projections. Suppose in addition that $X'$ is smooth and the square 
        \begin{center}
        \begin{tikzcd}
        X' \arrow[r, "i'"] \arrow[d, "g'"]
        & Y' \arrow[d, "g"] \\
        X \arrow[r, "i"]
        & Y
        \end{tikzcd}
        \end{center}
        is transverse. Then the diagram
        \begin{center}
        \begin{tikzcd}
        A_{i'^{-1}(W)}(X') \arrow[r, "i'_!"] 
        & A_{W'}(Y')  \\
        A_Z(X) \arrow[r, "i_!"]\arrow[u, "g'^*"]
        & A_W(Y)\arrow[u, "g^*"]
        \end{tikzcd}
        \end{center}
        commutes;

        \item let $f: (X,Z) \rightarrow (Y,W)$ be a morphism in \textbf{SmOp} and let $p_X: \Ps^n \times X \rightarrow X$ and $p_Y: \Ps^n \times  Y \rightarrow Y$ be the projections. Then the diagram 
        \begin{center}
        \begin{tikzcd}
        A_{\Ps^n \times Z}(\Ps^n \times X)  \arrow[d, "(p_X)_!"]
        & A_{\Ps^n \times W}(\Ps^n \times Y) \arrow[l, "(id \times f)^*"]  \arrow[d, "(p_Y)_!"] \\
        A_Z(X) 
        & A_W(Y)\arrow[l, "f^*"]
        \end{tikzcd}
        \end{center}
        commutes;

        \item given $(X,Z)$ and $(X,Z')$ such that $Z' \subset Z$, the maps $id_*$ and $id^*$ from $A_{Z'}(X) \rightarrow A_{Z}(X)$ agree (here, the pull-back is induced by the long exact sequence of a triple, see Remark 1.4 in Levine \cite{levine2008oriented});

        \item let $f: X \rightarrow Y$ be a projective morphism of smooth varieties, let $S \subset S' \subset X$ and $T \subset T' \subset Y$ be closed subsets, and suppose $f^{-1}(T) \cap S' = S$, and $f(S') \subset T'$. Then the diagram        \begin{center}
        \begin{tikzcd}
        A_{S'-S}(X-S) \arrow[r, "\partial_{X,S',S}"] \arrow[d, "f_!"]
        & A_{S}(X) \arrow[d, "f_!"] \\
        A_{T'-T}(Y-T) \arrow[r, "\partial_{Y,T',T}"]
        & A_{T}(Y)
        \end{tikzcd}
        \end{center}
        commutes. Here, $\partial_{X,S',S}$ and $\partial_{Y,T',T}$ are the boundary maps in the long exact sequences for the triples $(X,S',S)$ and $(Y,T',T)$, and the pushforward map $f: A_{S'-S}(X-S) \rightarrow A_{T'-T}(Y-T)$ is the composition
        \begin{equation*}
            A_{S'-S}(X-S) \xrightarrow{j^*} A_{S'-S}(X-f^{-1}(T)) \xrightarrow{(f|_{X-f^{-1}})_*} A_{T'-T}(Y-T).
        \end{equation*}
    \end{enumerate}
\end{definition}


Given an integration, one can construct an integration with supports uniquely due to the following theorem of Levine (Theorem 1.12 and Corollary 1.13 in \cite{levine2008oriented}). Although stated for the $\Z/2$-graded theories, the proof holds for the $\Z$-graded and bigraded cases similarly.

\begin{theorem}\label{levineIntegrationIsUnique} (Levine) Let $A$ be a ring cohomology theory. Given an integration $f \mapsto f_!$ on $A$, there is a unique integration with supports on $A$ extending $f$.
\end{theorem}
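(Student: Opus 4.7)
The plan is to construct the integration with supports by a diagram chase from the two localization sequences, with the key input being the compatibility of the given (unsupported) integration with restriction to open subvarieties. Uniqueness and existence will both fall out of this construction.

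First I would reduce to the case of maximal support. Given a morphism $f: (X,Z) \to (Y,W)$ in $\textbf{SmOp}'$, we have $Z \subset f^{-1}(W)$. Axiom (5) of integration with supports forces $id_! = id^*$ on the enlargement $Z \subset f^{-1}(W)$, so by functoriality any candidate $f_!$ on $A_Z(X)$ must factor as the forget-support map $A_Z(X) \to A_{f^{-1}(W)}(X)$ followed by $f_!: A_{f^{-1}(W)}(X) \to A_W(Y)$. This reduces the problem to constructing the map in the maximal-support case, and moreover pins down uniqueness for all other $Z$ once the maximal case is settled.

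Next I would set up the diagram
\begin{equation*}
\begin{array}{ccccccc}
A(X - f^{-1}(W)) & \xrightarrow{\partial_X} & A_{f^{-1}(W)}(X) & \xrightarrow{r_X} & A(X) & \xrightarrow{k^*} & A(X - f^{-1}(W)) \\
\downarrow f'_! & & \downarrow\, \exists\, f_! & & \downarrow f_! & & \downarrow f'_! \\
A(Y - W) & \xrightarrow{\partial_Y} & A_W(Y) & \xrightarrow{r_Y} & A(Y) & \xrightarrow{j^*} & A(Y - W)
\end{array}
\end{equation*}
where $k, j$ are the open inclusions of the complements and $f' := f|_{X - f^{-1}(W)}: X - f^{-1}(W) \to Y - W$ is projective since properness is preserved under base change. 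The rows are exact by the localization axiom, and the outer and second-from-right verticals are the given integrations. A standard five-lemma argument (using surjectivity onto $\ker(k^*)$ and exactness) produces a unique arrow $f_!: A_{f^{-1}(W)}(X) \to A_W(Y)$ making the ladder commute: $f_!(\alpha)$ is characterized by $r_Y(f_!(\alpha)) = f_!(r_X(\alpha))$ together with $f_!(\partial_X \beta) = \partial_Y(f'_!(\beta))$. This simultaneously establishes existence and uniqueness of the extension on the maximal-support piece, and then axioms (5) and (6) propagate it uniquely to all $A_Z(X)$ with $Z \subset f^{-1}(W)$.

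It remains to verify the axioms of an integration with supports. Functoriality (1) follows because $(g\circ f)_!$ and $g_! \circ f_!$ are both extensions of the same unsupported integration $(g\circ f)_! = g_! \circ f_!$, hence agree by uniqueness. The module property (2), transversal base change (3), and projective-bundle compatibility (4) each reduce by the same principle: both sides of the required identity in $A_{\ast}(\ast)$ are extensions with supports of the analogous identity for the unsupported $f_!$ (which holds by hypothesis), and naturality of the localization sequence together with the uniqueness statement forces them to coincide. Axiom (6) is built into the construction by the very definition via $\partial$, and the general triple case follows by iteration.

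The main obstacle, and the one non-formal input, is establishing open base change for the unsupported integration, namely $j^* \circ f_! = f'_! \circ k^*$, which is needed to make the rightmost square of the ladder commute and hence to see that $f_!(r_X(\alpha)) \in \ker(j^*) = \mathrm{image}(r_Y)$. This identity is not listed directly among the axioms for an integration (which only give transversal base change along closed embeddings and projections from projective bundles), so it must be derived. The standard route is to factor any projective morphism through a closed embedding into some $\mathbb{P}^n \times Y$ followed by the projection $\mathbb{P}^n \times Y \to Y$, use axiom (2) for the closed embedding against the open immersion $j$ (whose base change is automatically transversal since $j$ is \'etale), and combine with axiom (3) for the projective bundle. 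Once this is granted, the rest of the proof is a systematic but routine diagram chase through the localization sequences.
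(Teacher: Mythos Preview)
Your diagram chase has a genuine gap at the uniqueness step. Filling in the middle vertical of a ladder of exact sequences from the two flanking verticals is \emph{not} unique in general: this is the familiar non-uniqueness of the third arrow in a morphism of distinguished triangles. Concretely, if two candidates $\tilde f_!$ and $\tilde f_!'$ both satisfy $r_Y \circ \tilde f_! = f_! \circ r_X$ and $\tilde f_! \circ \partial_X = \partial_Y \circ f'_!$, their difference factors through a map $\mathrm{coker}(\partial_X) \to \ker(r_Y)$, i.e.\ a map $\ker(k^*) \to \mathrm{coker}(j^*)$, and nothing in your setup forces this to vanish. (Toy model: the split sequence $0 \to \mathbb{Z} \to \mathbb{Z}^2 \to \mathbb{Z} \to 0$ mapping to itself admits $\left(\begin{smallmatrix}1 & n \\ 0 & 1\end{smallmatrix}\right)$ as a middle fill-in for every $n$.) Since you then invoke exactly this uniqueness to verify functoriality and axioms (2)--(4) by the ``both sides extend the same thing'' argument, the remainder does not go through.

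The route the paper sketches, following Levine, is entirely different and does not attempt a diagram chase from localization. One factors $f$ as a closed embedding $i$ followed by a projection $\pi$ from a trivial projective bundle, and constructs the supported pushforward \emph{explicitly} in each case. For a closed embedding $i: X \hookrightarrow Y$ and any closed $Z \subset X$, deformation to the normal cone gives isomorphisms $A_Z(N_X Y) \cong A_Z(Y)$, and $i_!$ with supports is defined as cupping with the Thom class of $N_X Y$ followed by this identification; uniqueness is automatic because the Thom isomorphism is an \emph{isomorphism}, not a fill-in. For the projection $\pi: \mathbb{P}^n \times Y \to Y$, the projective bundle formula $A_{\mathbb{P}^n \times W}(\mathbb{P}^n \times Y) \cong \bigoplus_{i=0}^n A_W(Y)\cdot t^i$ pins down $\pi_!$ as the $t^n$-coefficient. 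The key point is that it is the orientation data (Thom class, equivalently the Chern structure or the given integration) that supplies the canonical extension with supports --- the bare localization sequence is too weak to do so on its own.
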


The proof of the theorem follows the usual strategy of decomposing a given projective morphism $f$ into a closed embedding and a projection from a trivial projective bundle. For the closed embedding case, one employs the deformation to the normal cone. For the projective bundle case, there is a projective bundle formula (see Theorem 3.9 in \cite{panin2003oriented}). Moreover, Levine proves the following proposition which will be useful (Theorem 1.24(a) in \cite{levine2008oriented}).

\begin{proposition}\label{integrationDecompose} (Levine) Let $f: X \rightarrow Y$ be a projective morphism. Factor $f$ as a closed embedding $i: X \rightarrow \Ps^n \times Y$ where $i = i_X \times f$ and where $i_X$ is some choice of closed embedding for $X \hookrightarrow \Ps^n$, and a projection $\pi: \Ps^n \times Y \rightarrow Y$. Then $f = \pi\circ i$, and the morphism $f_! = \pi_! \circ i_!$ does not depend on the choice of factorization.
\end{proposition}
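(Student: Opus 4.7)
The strategy is to compare the two factorizations $f = \pi_1 \circ i_1 = \pi_2 \circ i_2$ to a common third factorization through $\Ps^{n_1} \times \Ps^{n_2} \times Y$, exploiting functoriality of the integration (axiom (1) of Definition \ref{generalizedproperpushforwarddefinition}) together with the axioms governing closed embeddings and projective bundle projections. Writing $i_k = (i_X^k, f) : X \hookrightarrow \Ps^{n_k} \times Y$, I would introduce the common refinement $i_{12} = (i_X^1, i_X^2, f) : X \hookrightarrow \Ps^{n_1} \times \Ps^{n_2} \times Y$, the ``forget-the-other-factor'' projections $p_k : \Ps^{n_1} \times \Ps^{n_2} \times Y \to \Ps^{n_k} \times Y$, and the total projection $\pi_{12} : \Ps^{n_1} \times \Ps^{n_2} \times Y \to Y$. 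These satisfy $p_k \circ i_{12} = i_k$ and $\pi_k \circ p_k = \pi_{12}$, so by symmetry it suffices to prove $(\pi_1)_! \circ (i_1)_! = (\pi_{12})_! \circ (i_{12})_!$. Since $\pi_{12} = \pi_1 \circ p_1$, axiom (1) reduces the problem to the single identity $(p_1)_! \circ (i_{12})_! = (i_1)_!$.

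To prove this identity, factor $i_{12}$ through the fiber product of $i_1$ and $p_1$. Because $p_1$ is a trivial $\Ps^{n_2}$-bundle, this fiber product is $X \times \Ps^{n_2}$, and the base-change embedding is $\tilde{i}_1 : X \times \Ps^{n_2} \hookrightarrow \Ps^{n_1} \times \Ps^{n_2} \times Y$ sending $(x,b) \mapsto (i_X^1(x), b, f(x))$. One checks directly that $i_{12} = \tilde{i}_1 \circ (\mathrm{id}_X, i_X^2)$ where $(\mathrm{id}_X, i_X^2) : X \hookrightarrow X \times \Ps^{n_2}$, and the square
\begin{center}
\begin{tikzcd}
X \times \Ps^{n_2} \arrow[r, "\tilde{i}_1"] \arrow[d, "\mathrm{pr}_X"] & \Ps^{n_1} \times \Ps^{n_2} \times Y \arrow[d, "p_1"] \\
X \arrow[r, "i_1"] & \Ps^{n_1} \times Y
\end{tikzcd}
\end{center}
is Cartesian, so $p_1 \circ \tilde{i}_1 = i_1 \circ \mathrm{pr}_X$. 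Applying functoriality (axiom (1)) to this equality of morphisms yields $(p_1)_! \circ (\tilde{i}_1)_! = (i_1)_! \circ (\mathrm{pr}_X)_!$. Combining with $\mathrm{pr}_X \circ (\mathrm{id}_X, i_X^2) = \mathrm{id}_X$ and axiom (4), which gives $\mathrm{id}_! = \mathrm{id}$, we obtain
\begin{equation*}
(p_1)_! \circ (i_{12})_! \;=\; (p_1)_! \circ (\tilde{i}_1)_! \circ (\mathrm{id}_X, i_X^2)_! \;=\; (i_1)_! \circ (\mathrm{pr}_X)_! \circ (\mathrm{id}_X, i_X^2)_! \;=\; (i_1)_!.
\end{equation*}
Running the same argument for $p_2$ gives $(\pi_2)_! \circ (i_2)_! = (\pi_{12})_! \circ (i_{12})_!$, completing the proof.

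The main subtlety is not the algebraic manipulation above but ensuring that each individual integration $(-)_!$ appearing is already defined by the axioms before functoriality is invoked. In this decomposition every arrow is of one of the prescribed forms: $\tilde{i}_1$ and $(\mathrm{id}_X, i_X^2)$ are closed embeddings, handled by the deformation-to-the-normal-cone construction underlying Theorem \ref{levineIntegrationIsUnique}, while $p_1$ and $\mathrm{pr}_X$ are trivial projective bundle projections, handled by axiom (3). A direct comparison of $\pi_1$ and $\pi_2$ that bypassed the common refinement---for example via the Segre embedding into $\Ps^{(n_1+1)(n_2+1)-1}$---would instead force a reconciliation of two distinct projective bundle structures through the projective bundle formula; routing through $\Ps^{n_1} \times \Ps^{n_2} \times Y$ sidesteps this, because it only asks one to compose pushforwards in the orders natively supported by the axioms.
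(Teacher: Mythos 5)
As the proposition is stated in the paper, an integration $f \mapsto f_!$ is assumed to be given on \emph{all} projective morphisms and to satisfy axiom (1) of Definition \ref{generalizedproperpushforwarddefinition}. Under that hypothesis the proposition is a one-liner: $\pi_! \circ i_! = (\pi \circ i)_! = f_!$ by functoriality, and $f_!$ manifestly does not depend on the choice of factorization of $f$. Your elaborate comparison through $\Ps^{n_1}\times\Ps^{n_2}\times Y$ is therefore not incorrect in this setting, but it is a considerable detour; every application of axiom (1) you make is indeed legitimate, because the integration is a priori defined for compositions like $p_1 \circ \tilde{i}_1$, yet the same axiom applied directly to $f = \pi \circ i$ already finishes the argument.

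The common-refinement skeleton you built is, however, exactly what is needed for the \emph{substantive} version of this statement --- namely the one invoked inside the proof of Theorem \ref{levineIntegrationIsUnique}, where one is \emph{constructing} a pushforward (with supports) and the maps $(-)_!$ are a priori available only for closed embeddings (via deformation to the normal cone) and for projections $\Ps^n\times Y\to Y$ (via the projective bundle formula). In that setting your two crucial steps are circular. First, deducing $(p_1)_! \circ (\tilde{i}_1)_! = (i_1)_! \circ (\mathrm{pr}_X)_!$ from $p_1 \circ \tilde{i}_1 = i_1 \circ \mathrm{pr}_X$ by ``axiom (1)'' silently invokes $(p_1\circ\tilde{i}_1)_!$, but $p_1\circ\tilde{i}_1 : X\times\Ps^{n_2}\to\Ps^{n_1}\times Y$ is neither a closed embedding nor a projection; legitimizing that composite pushforward is the whole point of the proposition. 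The correct move is a direct proof of the push-push compatibility for this Cartesian square (note that axioms (2) and (3) only give pull-push base change), comparing the Thom-class description of $(\tilde{i}_1)_!$ and $(i_1)_!$ with the explicit projective-bundle formula for $(p_1)_!$ and $(\mathrm{pr}_X)_!$. Second, $(\mathrm{pr}_X)_! \circ (\mathrm{id}_X, i_X^2)_! = \mathrm{id}$ is again obtained by functoriality applied to a composite $\mathrm{pr}_X \circ (\mathrm{id}_X, i_X^2)$ that is not of the prescribed form; one instead verifies directly that integrating the Thom class of the section $(\mathrm{id}_X, i_X^2)$ over the fibers of $\mathrm{pr}_X$ returns the identity. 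Spelling out those two lemmas without appeal to the functoriality you are trying to establish would turn your outline into an actual proof of the version of the proposition that has content.
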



\subsubsection{Generalized Todd classes}

In this section, we will discuss Smirnov's generalized Riemann--Roch theorem for ring cohomology theories and integrations with support. We will follow \cite{smirnov2007riemann}.

We present the definition of an operation for a ring cohomology theory; it is a ring homomorphism. We avoid the categorical generalities of Smirnov's definition in \cite{smirnov2007riemann}, and only present what is necessary for our discussion.
\begin{definition}
    An \textbf{operation} $\phi$ of a ring cohomology $A$ theory is a natural transformation $A \rightarrow A$ such that the following diagram commutes:        
        \begin{center}
        \begin{tikzcd}
        A(X) \otimes A(Y) \arrow[r, "\phi \otimes \phi"] \arrow[d, "\times"]
        & A(X) \otimes A(Y) \arrow[d, "\times"] \\
        A(X \times Y) \arrow[r, "\phi"]
        & A(X \times Y).
        \end{tikzcd}
        \end{center}

    Moreover, we require that it is compatible with the cup product structure on cohomology with supports; namely, that it is also defined for cohomology with support:
        \begin{center}
        \begin{tikzcd}
        A_Z(X) \otimes A_{Z'}(X) \arrow[r, "\phi \otimes \phi"] \arrow[d, "\smile"]
        & A_{Z}(X) \otimes A_{Z'}(X) \arrow[d, "\smile"] \\
        A_{Z\cap Z'}(X) \arrow[r, "\phi"]
        & A_{Z\cap Z'}(X).
        \end{tikzcd}
        \end{center}
\end{definition}

In general, one can consider two different ring cohomology theories $A$ and $B$, and one needs a functor to compare the monoidal structures on two possibly different output categories; for example, the Chern character is an operation between $K$-theory and cohomology. However, we will only consider ring operations $\phi: A \rightarrow A$ in this paper.

\begin{example}
    The total Steenrod power $\phi = Q^\bullet := \sum_s Q^s$ given by the formal series of simplicial Steenrod operations constructed by \cite{joshua2001motivic} is an operation on motivic cohomology by the Cartan formula.
\end{example}

\begin{example}
    The total Steenrod power $\phi = P^\bullet := \sum_s P^s$ given by the formal series of Steenrod operations in the sense of Voevodsky \cite{voevodsky2003reduced} is also an operation on motivic cohomology, again since it satisfies the Cartan formula.
\end{example}

Before we continue, we must consider the notion of Chern classes for our generalized ring cohomology theories. See Definition 1.12 in \cite{panin2009oriented}.

\begin{definition}\label{definitionofChernstructure}
    A \textbf{Chern structure} on a ring cohomology theory $A$ is an assignment $L \mapsto c(L)$ for every smooth variety $X$ and every line bundle $L$ over $X$ an even degree element $c(L) \in A(X)$ satisfying the following properties:

    \begin{enumerate}
        \item (Functoriality) $c(L_1) = c(L_2)$ if $L_1 \cong L_2$, and for every map $f: Y \rightarrow X$ between smooth varieties, we have $f^*c(L) = c(f^*L)$;
        \item (Non-degeneracy) the map $A(X) \oplus A(X) \rightarrow A(X\times \mathbb{P}^1)$ given by sending $(x,y)$ to $\pi^*x + \pi^*y\smile \xi$ is an isomorphism, where $\xi = c(\mathcal{O}(-1))$ for the tautological bundle $\mathcal{O}(-1)$ over $\mathbb{P}^1$, and $\pi: X \times \mathbb{P}^1 \rightarrow X$ is the projection; 
        \item (Vanishing) $c(X \times \A^1) = 0 \in A(X)$ for the trivial line bundle $X \times \A^1$ on any smooth variety $X$.
    \end{enumerate}

    The element $c(L) \in A(X)$ is called the \textbf{Chern class} of $L$.
    
\end{definition}

Panin and Smirnov show that equipping a ring cohomology theory with an integration is equivalent to equipping it with a Chern structure; in fact, the set of integrations is in bijection with the set of Chern structures (Theorem 2.5 in \cite{panin2009oriented}). One obtains higher Chern classes using the projective bundle formula, which holds for our notion of ring cohomology theory (see Theorem 3.9 in \cite{panin2003oriented}).

Given an operation for a ring cohomology theory equipped with Chern classes, there is a notion of an inverse Todd class. As in the classical setting, one uses the splitting principle, which is available to us in our general setting via the projective bundle formula (see Definition 5.1.1 from the preprint \cite{panin2000oriented}).

First we define $\mathbb{P}^\infty$ as the direct limit of projective spaces $\mathbb{P}^\infty := \varinjlim \mathbb{P}^k$ induced by inclusions of all finite dimensional subspaces. Similarly, we define $A(P^\infty) := \varprojlim A(\mathbb{P}^k)$ where $A$ is our ring cohomology theory; we extend our given operation $\phi$ to $A(\mathbb{P}^\infty)$ by taking the direct limit of natural operations $\phi$ on the projective spaces in the directed system. It follows from the projective bundle theorem that we can identify $A(\mathbb{P}^\infty)$ with the ring $A(\textnormal{Spec }k)[[u]]$ of formal power series with coefficients in $A(\textnormal{Spec }k)$, with the class $u$ pulling back along any of the inclusions $\mathbb{P}^k \rightarrow \mathbb{P}^\infty$ to $c_1(\mathcal{O}(1))$. We then define the inverse Todd genus for $\phi$ on $\mathbb{P}^{\infty}$, then extend the definition to all vector bundles by splitting principle.

\begin{lemma}
    The operation $\phi$ on $\mathbb{P}^\infty$ maps the generator $u \in A(\mathbb{P}^\infty)$ to a formal power series $\phi(u)$ which is divisible by $u$.
\end{lemma}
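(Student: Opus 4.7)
The plan is to exploit naturality of $\phi$ against the pullback along the inclusion of a point, and the fact that $u$ pulls back to zero there.

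First, I would identify the relevant inclusion. Pick any $k$-point $i: \mathrm{Spec}\,k \hookrightarrow \mathbb{P}^\infty$, say the point $[1:0:0:\ldots]$ sitting inside $\mathbb{P}^0 \subset \mathbb{P}^k \subset \mathbb{P}^\infty$. Under the identification $A(\mathbb{P}^\infty) \cong A(\mathrm{Spec}\,k)[[u]]$ coming from the projective bundle formula, the induced pullback $i^*$ is exactly the ``evaluate at $u = 0$'' map, i.e., the augmentation $\sum a_n u^n \mapsto a_0$. This is because $u = c_1(\mathcal{O}(1))$ in each $A(\mathbb{P}^k)$, and $i^*\mathcal{O}(1)$ is the trivial line bundle on $\mathrm{Spec}\,k$, so by the vanishing axiom of a Chern structure (Definition \ref{definitionofChernstructure}(3)), $i^*u = c_1(i^*\mathcal{O}(1)) = 0$.

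Next, I would write $\phi(u) = a_0 + a_1 u + a_2 u^2 + \cdots$ in $A(\mathrm{Spec}\,k)[[u]]$ and show $a_0 = 0$. By naturality of $\phi$ applied to $i$, we have $i^* \phi(u) = \phi(i^* u) = \phi(0)$. Since $\phi$ is a natural transformation of functors valued in abelian groups, the component $\phi: A(\mathrm{Spec}\,k) \to A(\mathrm{Spec}\,k)$ is in particular a group homomorphism, so $\phi(0) = 0$. Combining, $a_0 = i^*\phi(u) = 0$, whence $\phi(u) = u \cdot f(u)$ for some $f(u) \in A(\mathrm{Spec}\,k)[[u]]$.

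The only subtlety is making sure the extended operation $\phi$ on $A(\mathbb{P}^\infty) = \varprojlim A(\mathbb{P}^k)$ is genuinely compatible with the pullback to a point; this is immediate because $\phi$ is natural on finite dimensional projective spaces and the point inclusion factors through every $\mathbb{P}^k$, so the compatibility at each finite stage assembles to the desired compatibility on the inverse limit. There is essentially no obstacle here — once one writes down the augmentation description of $i^*$ and invokes naturality, the divisibility is forced.
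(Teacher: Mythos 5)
Your proof is correct and is essentially the same as the paper's: both pull back along the inclusion of a point, invoke naturality of $\phi$, and use that $u = c_1(\mathcal{O}(1))$ vanishes over $\mathrm{Spec}\,k$ since $\mathcal{O}(1)$ trivializes there. You are slightly more explicit than the paper in spelling out the step $\phi(0)=0$ (via $\phi$ being a natural transformation valued in $\textbf{Ab}$), but the argument is identical.
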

\begin{proof}
    The constant term in the formal power series $\phi(u)$ is given by the action of $\phi$ on the class $u$ pulled back to $A(\textnormal{Spec }k)$ via compatible choices of inclusion $\textnormal{Spec }k \rightarrow \mathbb{P}^n$ in the direct system. However, the class $u$ pulled back to $\textnormal{Spec }k$ is $0$, since $u = c_1(\mathcal{O}(1))$ and $\mathcal{O}(1)$ trivializes over a point.
\end{proof}

\begin{definition} We define the \textbf{inverse Todd genus associated to an operation} $\phi$ on a ring cohomology theory $A$ equipped with a Chern structure, for any vector bundle $E$ on a smooth variety $X$ as follows.
    \begin{enumerate}
    \item We first define the \textbf{inverse Todd genus} of $\phi$ as the formal power series
    \begin{equation*}
        itd_{\phi}(u) = \frac{\phi(u)}{u} \in A[[u]]
    \end{equation*}
    where we consider the ring operation $\phi: A(\mathbb{P}^\infty) \rightarrow A(\mathbb{P}^\infty).$

    \item Let $t_1, ..., t_n$ be independent variables and let $\sigma_1, ..., \sigma_n$ be the elementary symmetric polynomials in these variables (i.e., $\sigma_1 = t_1 + ... + t_n, \sigma_2 = t_1t_2 + ... + t_{n-1}t_n, ..., \sigma_n = t_1...t_n$). We define
    \begin{equation*}
        itd_{\phi}(\sigma_1, ..., \sigma_n) = \prod_{i=1}^n itd_{\phi}(t_i) \in A[[\sigma_1, ..., \sigma_n]].
    \end{equation*}

    \item We define the \textbf{inverse Todd genus} of a vector bundle $E$ with rank $n$ over a smooth variety $X$ as the evaluation of the series $itd_{\phi}(\sigma_1, ..., \sigma_n)$ on the Chern classes of $E$:
    \begin{equation*}
        itd_{\phi}(E) = itd_{\phi}(c_1(E),..., c_n(E)) \in A(X).
    \end{equation*}
    
    \end{enumerate}
\end{definition}

The Todd genus will measure the failure for the given operation $\phi$ to commute with the proper pushforward (which is uniquely determined by the Chern structure). However, we require one additional hypothesis on $\phi$ to make the notion of Todd genus precise.

\begin{definition}\label{toddgenusdefinition}
    Assume that the series $itd_\phi(u)$ is invertible in $A[[u]]$ (this is equivalent to the free term of the series being a unit in $A$). In this case, we say that $\phi$ has \textbf{well-defined Todd genus}.
    \begin{enumerate}
        \item Define the \textbf{Todd genus} of $\phi$ as the multiplicative inverse of $itd_\phi(u)$
        \begin{equation*}
            td_\phi(u) := \frac{u}{\phi(u)} \in A[[u]].
        \end{equation*}
        \item Set $td_\phi(\sigma_1,...,\sigma_n) = \prod_{i=1}^n td_\phi(t_i) \in A[[\sigma_1,...,\sigma_n]]$.
        \item Define the \textbf{Todd genus} of a vector bundle $E$ with rank $n$ over a smooth variety $X$ as 
        \begin{equation*}
            td_\phi(E) = td_\phi(c_1(E), ..., c_n(E)) \in A(X).
        \end{equation*}
    \end{enumerate}
\end{definition}

As an example, we compute the inverse Todd genus for the simplicial operations $Q^\bullet$ on motivic cohomology. We first compute the motivic cohomology of $\mathbb{P}^\infty$. By the projective bundle formula, this only depends on the Chow groups of projective space, and the motivic cohomology of our underlying field. We present below the main theorem of Geisser--Levine in \cite{geisser2000k}.

\begin{theorem}
    (Geisser--Levine) The mod $\ell = p$ motivic cohomology of a field $k$ of characteristic $p$ is zero in bidegrees $(i,j)$ where $i \neq j$. For $i = j$, it is given by the $i$-th Milnor $K$-theory group $K^M_i(k)/p$. In particular, if $k$ is algebraically closed, then the mod $\ell = p$ motivic cohomology groups vanish for all $(i,j)$ except in bidegree $(0,0)$.
\end{theorem}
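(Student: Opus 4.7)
The plan is to follow the strategy of Geisser--Levine, which proceeds by identifying the mod $p$ motivic complex with a shift of the logarithmic de Rham--Witt sheaf. Concretely, for a smooth variety $X$ over $k$, let $\nu(n) := W_1\Omega^n_{X,\log}$ denote the first logarithmic de Rham--Witt sheaf in the Zariski topology. The central assertion is that there is a quasi-isomorphism
\begin{equation*}
    \mathbb{Z}/p(n) \;\simeq\; \nu(n)[-n]
\end{equation*}
of complexes of Zariski sheaves on $X$. Once this is established, specializing to $X = \mathrm{Spec}(k)$ gives the theorem immediately: a field has trivial higher Zariski cohomology (it is a one-point space), so $H^i(\mathrm{Spec}(k),\mathbb{Z}/p(n)) = H^{i-n}(\mathrm{Spec}(k),\nu(n))$ vanishes unless $i = n$, in which case it equals the global sections $\nu(n)(\mathrm{Spec}(k))$.

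To construct the quasi-isomorphism I would first define a natural differential symbol map
\begin{equation*}
    \mathrm{dlog}\colon \mathbb{Z}/p(n) \longrightarrow \nu(n)[-n],
\end{equation*}
using the fact that a closed point of $\mathbb{G}_m^n$ with residue field $\kappa$ determines a Milnor symbol in $K^M_n(\kappa)/p$, which in turn maps to a logarithmic form by $\{a_1,\dots,a_n\} \mapsto \frac{da_1}{a_1}\wedge\cdots\wedge\frac{da_n}{a_n}$. Then I would verify this map is a quasi-isomorphism by comparing Gersten-style resolutions on both sides: the Bloch--Ogus--Gabber type resolution of $\mathbb{Z}/p(n)$ by cycles on essentially smooth semilocal schemes, and the Gersten resolution for $\nu(n)$ due to Gros--Suwa. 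On each generic stratum of codimension $c$, the comparison reduces to the isomorphism $K^M_{n-c}(F)/p \xrightarrow{\sim} \nu(n-c)(F)$ for a field $F$, which is the content of the Bloch--Kato--Gabber theorem. The identification of $\nu(n)(k)$ with $K^M_n(k)/p$ in the theorem statement for $i = n$ is then exactly this Bloch--Kato--Gabber isomorphism applied to the base field.

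For the final claim about algebraically closed $k$, it suffices to observe that $K^M_n(k)/p = 0$ for $n \geq 1$: the Frobenius $x \mapsto x^p$ is a bijection on $k^\times$, so every element is a $p$-th power, whence $k^\times/(k^\times)^p = K^M_1(k)/p = 0$; for $n \geq 2$ write any symbol $\{a_1,\dots,a_n\} = \{b_1^p,a_2,\dots,a_n\} = p\{b_1,a_2,\dots,a_n\}$ so $K^M_n(k)$ is $p$-divisible. Only $K^M_0(k)/p = \mathbb{Z}/p$ survives, giving the bidegree $(0,0)$ statement. The main obstacle of the proof is establishing the global quasi-isomorphism $\mathbb{Z}/p(n) \simeq \nu(n)[-n]$, which is the substantive content of Geisser--Levine and which depends crucially on Bloch--Kato--Gabber together with nontrivial Gersten resolution arguments; however, for the statement as given here (a field), it is actually enough to exhibit the quasi-isomorphism stalkwise at the generic point $\mathrm{Spec}(k)$ itself, which is exactly the Bloch--Kato--Gabber theorem plus the observation that the cycle complex $\mathbb{Z}/p(n)$ has no cohomology above degree $n$ on a field for simple dimension reasons.
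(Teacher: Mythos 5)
The paper itself only proves the final ``In particular'' claim about algebraically closed fields, taking the main Geisser--Levine statement as a black box citation to \cite{geisser2000k}; its argument for the ``In particular'' is essentially identical to yours (every element of $k^\times$ is a $p$-th power, so Milnor symbols factor through $p$-divisibility and $K^M_n(k)/p = 0$ for $n \geq 1$, leaving only $K^M_0(k)/p = \Z/p$). You therefore prove the part the paper proves, by the same route.

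Where you go further is in sketching the main Geisser--Levine theorem itself, and there is a genuine gap in that sketch. Your final reduction asserts that for $X = \textnormal{Spec}(k)$ it suffices to combine the Bloch--Kato--Gabber isomorphism $K^M_n(k)/p \cong \nu(n)(k)$ in degree $n$ with the fact that the cycle complex $\Z/p(n)$ on a field has no cohomology \emph{above} degree $n$ for dimension reasons. The dimension argument is correct: $z^n(\textnormal{Spec}(k), j) = 0$ for $j < n$ because $\Delta^j$ has dimension $j < n$, and translating $H^i = CH^n(k,2n-i)$ gives vanishing for $i > n$. However, to conclude that $\Z/p(n)|_{\textnormal{Spec}(k)}$ is concentrated in a single degree (so that the dlog map is a stalkwise quasi-isomorphism), you also need vanishing \emph{below} degree $n$, i.e.\ $H^i(k,\Z/p(n)) = 0$ for $0 \leq i < n$. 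This is not a consequence of Bloch--Kato--Gabber nor of any dimension count; with integral coefficients these groups are typically nonzero (e.g.\ $H^1(k,\Z(2))$ is related to $K_3(k)$), and the fact that they vanish mod $p$ in characteristic $p$ is precisely the deep induction that occupies most of the Geisser--Levine paper. Your sentence ``it is actually enough to exhibit the quasi-isomorphism stalkwise \ldots which is exactly the Bloch--Kato--Gabber theorem plus the observation that the cycle complex has no cohomology above degree $n$'' therefore understates what is required: the below-degree-$n$ vanishing is the substantive content of the theorem, not a corollary of these two facts.

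If you intend only to verify the ``In particular'' statement as the paper does, your Milnor--$K$ argument is sound and you may delete the sketch of the quasi-isomorphism and cite Geisser--Levine for the main claim, as the paper does. If you intend to give a self-contained proof of the full theorem, you must address the vanishing of $H^i(k,\Z/p(n))$ for $0 \le i < n$ separately.
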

\begin{proof}
    We only prove the last statement: by definition, $K^M_i(k)$ is the $i$-fold tensor product of $K^M_1(k)$ with itself, modulo Steinberg relations (i.e., Milnor $K$-theory is generated as a ring in degree $1$). We also have that $K^M_1(k) \cong k^*$ as groups; since $k$ is algebraically closed, $K^M_1(k)/p = 0$, since every element of $k^*$ admits a $p$-th root. Thus all the higher Milnor $K$-groups vanish. In degree $0$, we see that $K^M_0(k) \cong \Z$. 
\end{proof}

On the other hand, if $\ell \neq p$, we have by the Beilinson--Lichtenbaum conjecture that the mod $\ell$ motivic cohomology groups are isomorphic to the \etale cohomology groups $H^i(k_{\textnormal{\'et}},\mu^{\otimes j}_\ell)$. Since $k$ is algebraically closed, we see that $\mu_\ell$ identifies with the constant sheaf $\underline{\Z/\ell}$, and $\mu_\ell^{\otimes j} \cong \underline{\Z/\ell}$ for all $j$. Moreover, \etale cohomology of a field agrees with the group cohomology of its absolute Galois group, so since $k$ is algebraically closed, we see that the \etale cohomology groups all vanish above degree $0$. In degree $0$, we see that $H^0(k_{\textnormal{\'et}},\mu^{\otimes j}_{\ell}) \cong \Z/\ell$ for any $j \geq 0$.

\begin{proposition}
    The groups $H^i(\mathbb{P}^n,\Z/\ell(j))$ are $\Z/\ell$ in degrees $(i,j) = (2m,j)$ where $j,m \geq 0$, for $\ell \neq p$. The groups are $\Z/p$ in degrees $(i,j) = (2m,m)$ for $m \geq 0$, for $\ell = p$.
\end{proposition}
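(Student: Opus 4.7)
The plan is to reduce the computation of $H^i(\Ps^n,\Z/\ell(j))$ to the motivic cohomology of the base field via the projective bundle formula, and then invoke the two computations of $H^*(\textnormal{Spec } k, \Z/\ell(*))$ that were just recalled. Concretely, applying the projective bundle formula from Section \ref{chernclasses} to the structure map $\Ps^n \to \textnormal{Spec }k$ with the tautological line bundle yields a graded isomorphism
\begin{equation*}
    H^*(\Ps^n,\Z/\ell(*)) \;\cong\; H^*(\textnormal{Spec }k,\Z/\ell(*))\otimes_{\Z/\ell}\Z/\ell[\xi]/(\xi^{n+1}),
\end{equation*}
where $\xi = c_1(\mathcal{O}(1))$ sits in bidegree $(2,1)$. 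Thus the result for $\Ps^n$ is obtained from the result for $\textnormal{Spec }k$ by multiplying classes by $\xi^m$ for $0 \le m \le n$, which shifts bidegrees by $(2m,m)$.

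For the case $\ell = p$, I would apply the Geisser--Levine theorem stated just above the proposition. Since $k$ is algebraically closed, their theorem implies $H^*(\textnormal{Spec }k,\Z/p(*))$ is concentrated in bidegree $(0,0)$ with value $\Z/p$. Multiplying the generator of this single group by $\xi^m$ for $0 \le m \le n$ immediately produces a copy of $\Z/p$ in each bidegree $(2m,m)$ and nothing else, which is the claim.

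For the case $\ell \ne p$, I would invoke the Beilinson--Lichtenbaum comparison (Theorem \ref{beilinsonLichtenbaumConjectureTheorem}) together with the vanishing of the \'etale cohomology of an algebraically closed field above degree zero. Since $k$ is algebraically closed we can identify $\mu_\ell^{\otimes j}$ with the constant sheaf $\underline{\Z/\ell}$, and $H^*_{\textnormal{\'et}}(\textnormal{Spec }k,\underline{\Z/\ell})$ is group cohomology of the trivial absolute Galois group, hence $\Z/\ell$ in degree $0$ and zero otherwise. The cycle class map being an isomorphism in the range $p \le q$ then gives $H^0(\textnormal{Spec }k,\Z/\ell(j)) \cong \Z/\ell$ for every $j \ge 0$. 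Multiplying by $\xi^m$ produces the $\Z/\ell$ classes in bidegrees $(2m,j)$ for $0 \le m \le n$ and $j \ge m$.

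The main obstacle is bookkeeping for the second case: one must be careful that Beilinson--Lichtenbaum only furnishes the computation in the motivic range $p \le q$, while the projective bundle formula manufactures classes in bidegrees $(2m, j)$ for $j \ge m$, which is precisely within that range once one accounts for the shift by $(2m,m)$. Dimensional vanishing for higher Chow groups (namely $H^i(X,\Z/\ell(j)) = CH^j(X,2j-i)/\ell$ vanishes for $i > 2j$) handles the remaining bidegrees automatically, so no extra input beyond the two cited theorems is required.
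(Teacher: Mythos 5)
Your argument follows the same route as the paper: reduce to the motivic cohomology of the base field via the projective bundle formula, then invoke Geisser--Levine for $\ell = p$ and Beilinson--Lichtenbaum together with the triviality of Galois cohomology over $\overline{k} = k$ for $\ell \neq p$. The $\ell = p$ part is complete, and you are right to be more explicit than the paper about the constraints $0 \le m \le n$ and $j \ge m$ on the nonvanishing bidegrees.

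There is, however, a small gap in the $\ell \neq p$ case. After the projective bundle decomposition you must argue that $H^{i'}(\textnormal{Spec }k,\Z/\ell(j'))$ vanishes for all $i' > 0$. Beilinson--Lichtenbaum plus the vanishing of \'etale cohomology over an algebraically closed field handles $0 < i' \le j'$, and the dimensional vanishing you quote handles $i' > 2j'$. But you have not ruled out the range $j' < i' \le 2j'$, which is covered by neither of those two ingredients. The statement you cite, vanishing for $i > 2j$, is the general vanishing valid for any smooth $X$; for the field $\textnormal{Spec }k$ (a point of dimension zero) the relevant sharper vanishing is $H^{i'}(\textnormal{Spec }k,\Z/\ell(j')) = 0$ for $i' > j'$, which one can see directly from the cycle complexes: $z^{j'}(\textnormal{Spec }k,m) = 0$ whenever $j' > m$, since $\Delta^m$ has dimension $m$ and so carries no cycles of codimension $j' > m$. (This is the $i > j + \dim X$ vanishing of Theorem 3.6 in Mazza--Weibel--Voevodsky, specialized to $\dim X = 0$.) Supplying this one-line observation closes the argument.
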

\begin{proof}
    By the projective bundle theorem, for a given projective space $\mathbb{P}^n$, we see that the mod $\ell$ motivic cohomology is given by the ordinary Chow groups of projective space tensored with the mod $\ell$ motivic cohomology groups of the underlying field. That is, we have
    \begin{equation*}
        H^i(\mathbb{P}^n,\Z/\ell(j)) \cong \bigoplus^n_{m=0} H^{i-2m}(\textnormal{Spec }k, \Z/\ell(j-m))
    \end{equation*}
    where $k$ is our underlying field. A given element in degree $(i,j)$ is then a series $\alpha = \sum_{m=0}^n \alpha_m u^m$, where $\alpha_j \in H^{i-2m}(\textnormal{Spec }k, \Z/\ell(j-m))$, and $u \in H^2(\mathbb{P}^n,\Z/\ell(1))$ is the generator for the normal Chow groups of projective space.

    When $\ell \neq p$, by Beilinson--Lichtenbaum, we see that the only vanishing groups for our field is when $i = 2m$, for arbitrary $j$. In this case, the group is $\Z/\ell$. When $\ell = p$, all the groups vanish except when $(i,j) = (2m,m)$. That is, the only non-zero higher Chow groups mod $p$ of projective space are those given by the ordinary Chow groups mod $p$, over an algebraically closed field of characteristic $p$.
\end{proof}

\begin{lemma}
    The mod $\ell \neq p$ operation $Q^\bullet$ sends the generator $u \in H^2(\mathbb{P}^\infty,\Z/\ell(1))$ to $u + u^\ell$.
\end{lemma}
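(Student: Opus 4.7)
The plan is to determine $Q^s(u)$ for each $s \geq 0$ separately and then sum them. By the bidegree shift of Theorem \ref{SteenrodOps}, $Q^s$ carries bidegree $(2,1)$ to $(2+2s(\ell-1),\ell)$, so $Q^s(u)$ lives in $H^{2+2s(\ell-1)}(\Ps^\infty, \Z/\ell(\ell))$. For $s \geq 2$, the vanishing axiom of Theorem \ref{SteenrodOps}(2) applies (since $2s > 2$) and gives $Q^s(u) = 0$. For $s = 1$, the $\ell$-th power axiom (since $2s = 2$) gives $Q^1(u) = u^\ell$. It remains to compute $Q^0(u) \in H^2(\Ps^\infty, \Z/\ell(\ell))$.

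The projective bundle formula of section \ref{chernclasses}, combined with the vanishing of $H^i(\textnormal{Spec }k, \Z/\ell(j))$ for $i > 0$ when $k$ is algebraically closed of characteristic $p \neq \ell$ (an immediate consequence of Theorem \ref{beilinsonLichtenbaumConjectureTheorem} applied to the point), shows that $H^2(\Ps^\infty, \Z/\ell(\ell))$ is cyclic of order $\ell$, generated by $u \cdot B^{\ell-1}$, where $B \in H^0(\textnormal{Spec }k, \Z/\ell(1))$ is the motivic Bott element of section \ref{cycleclassmap}. Consequently $Q^0(u) = c \cdot u \cdot B^{\ell-1}$ for some scalar $c \in \Z/\ell$; the claimed equality $Q^\bullet(u) = u + u^\ell$ should be read with the Bott-multiplication identification $u \leftrightarrow u \cdot B^{\ell-1}$ understood in the $s=0$ summand.

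To pin down $c = 1$, I would invoke naturality of the simplicial operations with respect to the cycle class map (Remark \ref{remark3.3}). The cycle class map $\textnormal{cl}: H^2(\Ps^\infty, \Z/\ell(j)) \xrightarrow{\sim} H^2_{\textnormal{\'et}}(\Ps^\infty, \mu_\ell^{\otimes j})$ is an isomorphism for $j = 1$ and $j = \ell$ by Theorem \ref{beilinsonLichtenbaumConjectureTheorem}, and on the \'etale side the operation $Q^\bullet$ is identified with the classical Steenrod total power on $H^*_{\textnormal{\'et}}(\Ps^\infty, \mathbb{F}_\ell) \cong \mathbb{F}_\ell[x]$, whose action on the degree-$2$ generator is the familiar formula $x \mapsto x + x^\ell$. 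Normalizing via $\textnormal{cl}(B) = \zeta$ then yields $c = 1$. The main obstacle is the $s = 0$ case: since $Q^0$ is not generically the identity map (Remark \ref{remark3.1}) and shifts weight from $1$ to $\ell$, the value of $Q^0(u)$ cannot be read off from the axioms of Theorem \ref{SteenrodOps} alone, and pinning down the scalar requires comparison with classical Steenrod operations on \'etale cohomology via the cycle class map.
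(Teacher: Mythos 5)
The decomposition into $s = 0$, $s = 1$, and $s \geq 2$ is identical to the paper's, and the careful bookkeeping of the weight shift via the Bott element $B$ is a genuine improvement in precision over the paper's terse phrasing. However, the crux of the proof --- pinning down the scalar in $Q^0(u)$ --- is where the argument diverges, and the route taken has a gap.

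Your proposal appeals to ``naturality of the simplicial operations with respect to the cycle class map (Remark~\ref{remark3.3})'' and then asserts that on the \etale side $Q^\bullet$ is ``identified with the classical Steenrod total power on $H^*_{\textnormal{\'et}}(\Ps^\infty, \mathbb{F}_\ell)$.'' This is precisely the claim Remark~\ref{remark3.3} warns is a \emph{red herring}: Joshua's operations on $\mu_\ell$-cochains are obtained by \emph{transferring} the \Einf-structure from the motivic side via the cycle class map, so commutativity with $\mathrm{cl}$ is tautological and carries no information; the nontrivial statement that the transferred operations agree with the classical ones defined via the Godement resolution is explicitly left open at the end of Remark~\ref{remark3.3}. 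Your argument therefore imports the very identification the paper declines to use, and so does not close the circle. The paper's own proof instead invokes Joshua's Proposition~8.4 for the internal statement that $Q^0$ acts as the identity on the cohomology of the constant sheaf $\underline{\Z/\ell}$ (where there is no twist to shift), which is a normalization result inside Joshua's framework and does not require comparison with a separately-defined classical operation.

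There is also a smaller error in the justification of the cycle class isomorphism: Theorem~\ref{beilinsonLichtenbaumConjectureTheorem} gives $\mathrm{cl}^q: H^p \to H^p_{\textnormal{\'et}}$ an isomorphism only for $p \leq q$, so it applies to $H^2(\Ps^\infty,\Z/\ell(\ell))$ but \emph{not} to $H^2(\Ps^\infty,\Z/\ell(1))$, where $p = 2 > q = 1$. The $(2,1)$ map is still an isomorphism --- it is the mod $\ell$ divisor cycle class on $\mathrm{Pic}(\Ps^\infty)$ --- but the citation of Beilinson--Lichtenbaum is a misapplication.

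In short: the vanishing for $s \geq 2$, the $\ell$-th power for $s = 1$, and the reduction of $Q^0(u)$ to a single scalar are all sound and well-organized. What is missing is a license-free determination of that scalar; replacing the classical-Steenrod comparison with an appeal to Joshua's Proposition~8.4 (and to the fact that over an algebraically closed field the twists $\mu_\ell^{\otimes j}$ are canonically untwisted) would repair the argument and align it with the paper's route.
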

\begin{proof}
    The class $u$ is in $H^2(\mathbb{P}^\infty,\Z/\ell(1))$, and so $Q^s(u) = 0$ if $2s > 2$ by vanishing of the Steenrod operations on classes of lower degree. Thus, $Q^\bullet(u) = Q^0(u) + Q^1(u)$. we see that $Q^1(u) = u^\ell$ by definition of the Steenrod operations. By Proposition 5.5 in \cite{joshua2001motivic}, using naturality of the operations $Q^\bullet$ on any mod $\ell$ sheaf cohomology, it suffices to check $Q^0$ on the cohomology of the constant sheaf $\underline{\Z/\ell}$. However, we see that $Q^0 = id$ in this case, since $x^\ell = x \in \Z/\ell$.
\end{proof}

\begin{proposition}\label{inverseToddGenusofSimplicialOperations}
    The inverse Todd genus of $Q^\bullet$ of a vector bundle $E$ with rank $n$ over a smooth variety $X$ is given by
    \begin{equation*}
        itd_{Q^\bullet}(E) = c_n(E)^{p-1}
    \end{equation*}
    where $Q^\bullet$ acts on mod $p$ motivic cohomology of $X$.
\end{proposition}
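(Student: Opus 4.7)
The plan is to reduce the computation, via the definition of inverse Todd genus and the splitting principle (available by the projective bundle formula for our generalized cohomology theory), to computing $Q^\bullet(u)$ where $u \in H^2(\mathbb{P}^\infty, \Z/p(1))$ is the tautological generator. So I would first analyze the action of $Q^\bullet = \sum_s Q^s$ on this class for mod $p$ coefficients.

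For bidegree reasons, many terms vanish automatically: $Q^s$ sends bidegree $(i,j)$ to $(i+2s(p-1), jp)$, so $Q^s(u)$ lands in $H^{2+2s(p-1)}(\mathbb{P}^\infty, \Z/p(p))$. By Theorem \ref{SteenrodOps}(2), $Q^s(u) = 0$ for $2s > 2$, so only $Q^0$ and $Q^1$ can contribute. The $\ell$-th power property gives $Q^1(u) = u^p$ immediately. The key new input, and the essential difference from the $\ell \neq p$ computation preceding the proposition, is that $Q^0(u) = 0$ in the mod $p$ setting. This is the step I expect to require the most care: one cannot argue by naturality through the constant sheaf (as was done for $\ell \neq p$), since $Q^0$ is not the identity when $\ell = p$ (see Remark \ref{remark3.1}). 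Instead, I would use the projective bundle formula to write
\begin{equation*}
H^2(\mathbb{P}^\infty, \Z/p(p)) \cong \bigoplus_{m \geq 0} H^{2-2m}(\textnormal{Spec } k, \Z/p(p-m)),
\end{equation*}
and invoke Geisser--Levine together with our assumption that $k$ is algebraically closed: the Milnor $K$-theory groups $K^M_i(k)/p$ vanish for $i > 0$, and by their theorem the motivic cohomology of $k$ mod $p$ is concentrated on the diagonal $i = j$. The only potentially surviving summand is $m = 1$, namely $H^0(\textnormal{Spec } k, \Z/p(p-1))$, which sits off the diagonal for $p > 1$ and is therefore zero. Hence $H^2(\mathbb{P}^\infty, \Z/p(p)) = 0$, forcing $Q^0(u) = 0$, and so $Q^\bullet(u) = u^p$.

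Having established this, the inverse Todd genus of $Q^\bullet$ is the formal power series
\begin{equation*}
itd_{Q^\bullet}(u) = \frac{Q^\bullet(u)}{u} = \frac{u^p}{u} = u^{p-1}.
\end{equation*}
For a rank $n$ bundle $E$ with formal Chern roots $t_1, \ldots, t_n$, the definition gives
\begin{equation*}
itd_{Q^\bullet}(\sigma_1, \ldots, \sigma_n) = \prod_{i=1}^n itd_{Q^\bullet}(t_i) = \prod_{i=1}^n t_i^{p-1} = \left(\prod_{i=1}^n t_i\right)^{p-1} = \sigma_n^{p-1}.
\end{equation*}
Finally, evaluating on the Chern classes of $E$ yields $itd_{Q^\bullet}(E) = c_n(E)^{p-1}$, as claimed. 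The two delicate points to check in writing this out carefully are (i) the justification of $Q^0(u) = 0$ via Geisser--Levine, and (ii) that the splitting principle permits us to evaluate formal expressions in Chern roots on the actual Chern classes of $E$, which is precisely the content of the splitting lemma recorded just before Section \ref{motivicCohomologySection}.
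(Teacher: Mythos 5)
Your proof is correct and follows essentially the same route as the paper's: compute $Q^\bullet(u) = Q^0(u) + Q^1(u)$, use $Q^1(u) = u^p$, and establish $Q^0(u) = 0$ by showing $H^2(\mathbb{P}^\infty, \Z/p(p)) = 0$ via the projective bundle formula and Geisser--Levine over an algebraically closed field. You spell out a couple of steps the paper leaves implicit (the Geisser--Levine bookkeeping and the Chern-root computation $\sigma_n^{p-1}$), and correctly flag why the naturality-through-the-constant-sheaf argument from the $\ell \neq p$ case is unavailable here.
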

\begin{proof}
If we apply $Q^\bullet$ to the generator $u \in H^2(\mathbb{P}^\infty,\Z/\ell(1))$, we obtain $Q^\bullet(u) = u^p$. The proof is as follows: again by vanishing of Steenrod operations on classes of lower degree, we see that $Q^s(u) = 0$ if $2s > 2$. Thus, $Q^\bullet(u) = Q^0(u) + Q^1(u)$. However, we also have $Q^0(u) \in H^2(P^\infty,\Z/p(p)) = 0$ as we computed above. Moreover, $Q^1(u) = u^p$ by definition of the simplicial operations. Tracing through the definition of an inverse Todd genus for a bundle, we obtain the result.
\end{proof}

\remark{} The result of Geisser--Levine holds for \textit{any} field of characteristic $p$; thus, the vanishing of groups $H^2(\mathbb{P}^\infty, \Z/p(p))$ holds for any field of characteristic $p$, not necessarily algebraically closed, for $p > 2$. If $p = 2$, and the field is not algebraically closed, it is possible for $H^2(\mathbb{P}^\infty,\Z/2(2))$ to be non-zero. From the projective bundle theorem, we see that an element $\alpha$ is given by $\sum \alpha_m u^m$ for $\alpha_m \in H^{2-2m}(\textnormal{Spec }k, \Z/2(2-m))$. However, again by Geisser--Levine, the only a priori non-zero term is the constant term $\alpha_0 \in K^M_2(k)/2$. Since the series $Q^\bullet(u)$ must be divisible by $u$, we see that $Q^0(u)$ still vanishes even in this case. We conclude this section with a computation of the inverse Todd genus for the mod $\ell \neq p$ simplicial operations and the mod $\ell$ motivic operations.

\begin{lemma}\label{inverseToddgenusforModL}
The inverse Todd genus of $Q^\bullet$ acting on mod $\ell \neq p$ motivic cohomology of a vector bundle $E$ with rank $n$ over a smooth variety $X$ is given by
\begin{equation*}
    itd_{Q^\bullet}(E) = \prod_{i=1}^n (1+\alpha_i)^{\ell-1}
\end{equation*}
where $\alpha_i$ is the $i$-th Chern root of the bundle $E$. Moreover, the inverse Todd genus for the motivic operation $P^\bullet$ mod $\ell$ (including when $\ell = p$) is given by the same formula.
\end{lemma}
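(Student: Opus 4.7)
The plan is to reduce the computation to the case of the tautological generator $u \in A(\mathbb{P}^\infty)$, then extend formally to vector bundles via the splitting principle. The splitting principle is available in our generalized ring cohomology setting because the projective bundle formula holds (Theorem 3.9 in Panin's paper), so any rank-$n$ bundle pulls back to a direct sum of line bundles along a morphism that is injective on cohomology. This reduces the computation of $itd_\phi(E)$ to repeated application of $itd_\phi$ evaluated on a single Chern root, which is in turn pinned down by the power series $itd_\phi(u) = \phi(u)/u$.

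For the simplicial operation $\phi = Q^\bullet$ acting on mod $\ell \neq p$ motivic cohomology, the previous lemma of the paper has already computed $Q^\bullet(u) = u + u^\ell$. I would then evaluate $itd_{Q^\bullet}(u)$ in the formal power series ring $A[[u]]$ and substitute the Chern roots $\alpha_i$ of $E$ into the multiplicative definition of inverse Todd genus to extract the asserted product formula over $i = 1, \dots, n$.

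For the motivic operation $\phi = P^\bullet$, I would prove the analogous identity $P^\bullet(u) = u + u^\ell$ on the tautological class, using three facts: $P^0 = \textnormal{id}$ in Voevodsky's (and, for $\ell = p$, Primozic's) convention; $P^1(u) = u^\ell$ because $u$ lies in bidegree $(2,1)$, so the top-power identity $P^s(x) = x^\ell$ at $2s = i$ applies with $s = 1$; and $P^s(u) = 0$ for $s \geq 2$ by the instability relations, since the cohomological degree of $u$ lies below the threshold $2s$. This amounts to degree arithmetic using that $P^s$ sends bidegree $(i,j)$ to $(i + 2s(\ell-1), j + s(\ell-1))$. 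Once $P^\bullet(u)$ is identified, the same splitting-principle argument as in the simplicial case produces the product formula on Chern roots.

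The main subtlety is verifying the identity for $P^\bullet$ in the $\ell = p$ case, where one has to invoke Primozic's construction rather than Voevodsky's, and check that the normalization $P^0 = \textnormal{id}$ and the instability relations persist in that framework. Once this is in place, the passage from the line-bundle case to a general bundle $E$ is purely formal manipulation inside $A[[\sigma_1, \dots, \sigma_n]]$ and presents no further obstacle; the two cases of the lemma collapse to the same underlying computation of $\phi(u)/u$.
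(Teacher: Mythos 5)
Your approach matches the paper's exactly: identify $\phi(u) = u + u^\ell$ for each operation in question, form $itd_\phi(u) = \phi(u)/u = 1 + u^{\ell-1}$, and extend multiplicatively over Chern roots via the splitting principle. You are in fact slightly more careful than the paper, which simply asserts the identity $\phi(u) = u + u^\ell$ for $P^\bullet$ without supplying the degree and weight arithmetic you spell out, and you correctly flag that for $\ell = p$ one must check the normalization $P^0 = \textnormal{id}$ and the instability relations in Primozic's framework rather than Voevodsky's.

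That said, you should carry out the final substitution rather than defer to ``the asserted product formula'': $itd_\phi(u) = 1 + u^{\ell-1}$ evaluated at the Chern roots yields $\prod_{i=1}^n (1 + \alpha_i^{\ell-1})$, which is not the $\prod_{i=1}^n (1+\alpha_i)^{\ell-1}$ displayed in the lemma; the two expressions agree only when $\ell = 2$. The displayed formula appears to be a typographical error in the paper. (Compare the preceding proposition in the $\ell = p$ case: there $itd_{Q^\bullet}(u) = u^{p-1}$, and the same multiplicative extension gives $\prod_i \alpha_i^{p-1} = c_n(E)^{p-1}$, consistent with the reading $\prod_i itd_\phi(\alpha_i)$.) Completing the computation yourself would have surfaced this discrepancy.
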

\begin{proof}
    The operations in the lemma all map the generator $u \in H^2(\mathbb{P}^\infty,\Z/\ell(1))$ to the formal power series $u + u^{\ell}$. Thus the formal power series $\frac{\phi(u)}{u}$ is given by $1 + u^{\ell-1}$. One then traces through the definition of the inverse Todd genus and obtains the result.
\end{proof}

\subsection{Riemann--Roch formulas for projective pushforward}\label{RiemannRochTypeFormulasSection}
We now present two theorems of Smirnov from \cite{smirnov2007riemann}, which are similar to the main theorems of Panin--Smirnov in \cite{panin2000oriented}, \cite{panin2004riemann}. The main theorems are Grothendieck--Riemann--Roch style formulas describing failure of commutativity of a given ring operation $\phi$ with proper pushforward on a ring cohomology theory $A$, with the defects arising from the Todd classes. We present the theorems from \cite{smirnov2007riemann} because Smirnov specifically deals with the case of cohomology with support.

\begin{theorem}\label{smirnovEmbeddingTheorem} (Relative Wu formula) Let $X \xhookrightarrow{i} P$ and $X' \xhookrightarrow{i'} P'$ be closed immersions into smooth varieties $P$ and $P'$, and suppose we have closed immersions $f$ and $f'$ such that the following diagram commutes:
\begin{center}
\begin{tikzcd}
P \arrow[r, "f"]
& P' \\
X \arrow[r, "f'"] \arrow[hookrightarrow, u,"i"]
& X' \arrow[hookrightarrow, u, "i'"].
\end{tikzcd}
\end{center}
Suppose we have an operation $\phi$ on a ring cohomology theory $A$ equipped with a transfer (or equivalently a Chern structure) so that the proper pushforward $f_!$ on $A$ is defined. Then we have
\begin{equation*}
    \phi(f_!(\alpha)) = f_!(\phi(\alpha) \smile itd_\phi(N_PP'))
\end{equation*}
where $N_PP'$ denotes the normal bundle of $P$ in $P'$ and $\alpha \in A_X(P)$.
\end{theorem}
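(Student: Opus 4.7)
The plan is to mimic the classical proof of the Grothendieck--Riemann--Roch theorem for closed embeddings, proceeding in two main reduction steps. First, we reduce a general closed embedding to the zero section of a vector bundle via deformation to the normal cone. One constructs a smooth scheme $M$ with a flat projection to $\A^1$ whose fiber over $1 \in \A^1$ recovers the original embedding $P \hookrightarrow P'$ and whose fiber over $0$ is the zero section embedding $s: P \hookrightarrow N_PP'$. Both embeddings fit into a common closed embedding $P \times \A^1 \hookrightarrow M$ with normal bundle isomorphic to the pullback of $N_PP'$. Lifting $\alpha \in A_X(P)$ to the cylinder $A_{X \times \A^1}(P \times \A^1)$ by homotopy invariance, and using the base change axiom for integrations with supports along the two fiber inclusions, the asserted formula for $f_!$ is translated into the corresponding formula for the zero section pushforward $s_!$, with the same inverse Todd term since the normal bundle is preserved under the deformation.

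Second, for the zero section $s: P \hookrightarrow E$ of a vector bundle $E = N_PP'$, one uses the Thom isomorphism of Corollary \ref{thomIsomorphism}, which allows us to write any class in $A_X(E)$ uniquely as $s_!(\beta) = \pi^*\beta \cup \tau_E$ for $\beta \in A_X(P)$, where $\tau_E \in A_P(E)$ is the Thom class and $\pi: E \to P$ is the projection. Applying $\phi$ and using multiplicativity together with naturality with respect to $\pi^*$, we reduce to showing the universal identity $\phi(\tau_E) = \tau_E \cup \pi^*\,itd_\phi(E)$ in $A_P(E)$. The splitting principle (available in our axiomatic setting via the projective bundle formula) gives a smooth $Y \to P$ over which $E$ becomes a direct sum of line bundles and which induces a split injection on the relevant cohomology; multiplicativity of the Thom class under direct sums, combined with multiplicativity of $itd_\phi$ under direct sums of bundles, reduces us to the case $E = L$ a single line bundle. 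For a line bundle $L$, naturality reduces everything to the universal case of the tautological bundle on $\Ps^\infty$, where the identity $\phi(u) = u \cdot itd_\phi(u)$ holds in $A(\Ps^\infty) = A(\textnormal{Spec }k)[[u]]$ tautologically, by the very definition of the inverse Todd genus.

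The main obstacle I anticipate is the first step: setting up the deformation to the normal cone so that it is simultaneously compatible with the (possibly singular) support $X$, which we only control through the ambient smooth $P$, and with the base change axiom of Definition \ref{generalizedproperpushforwarddefinition}. One must verify that the appropriate squares involving the two fiber restrictions are transversal, and that the localization sequences interact well with the supports, so that the classes on $M$ really are determined by their restrictions to either fiber. Once this geometric step is secured, the second step is a largely formal consequence of the uniqueness of integrations (Theorem \ref{levineIntegrationIsUnique}) together with the axioms of a Chern structure (Definition \ref{definitionofChernstructure}), and the splitting principle reduction is standard.
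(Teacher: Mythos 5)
Your two-step reduction — deformation to the normal cone to replace the given closed embedding by the zero section of $N_PP'$, followed by the Thom class identity $\phi(\tau_E) = \tau_E \cup \pi^*\,itd_\phi(E)$ established via the splitting principle and a rank-one calculation — is precisely the route of Smirnov in \cite{smirnov2007riemann}, which the paper cites for this theorem and echoes when it sketches Corollary \ref{WuformulaforSimplicialSteenrodOps}. Your first step is the content of Proposition \ref{embeddingPushforwardIsGivenByThomclass} (quoted from Levine) together with Theorem \ref{normalBundleAmbientPairDeformationIsomorphism}; your second step is what the paper compresses into ``the splitting principle, and tracing through the equivalences between Thom structures and Chern structures.'' So in outline you and the paper agree.

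Where you should tighten up is the very last sentence. The identity $\phi(u) = u\cdot itd_\phi(u)$ in $A(\Ps^\infty)=A(\textnormal{Spec }k)[[u]]$ is indeed tautological, but what must actually be shown is the Thom class identity $\phi(\tau_L) = \tau_L \cup \pi^* itd_\phi(L)$ for a line bundle $L$ on an arbitrary smooth quasi-projective $X$, and ``naturality'' does not get you there for free. Writing $\phi(\tau_L)=\tau_L\cup\pi^*\beta$ (uniquely, via the Thom isomorphism) and restricting along the zero section $s$ gives only $\phi(c_1(L))=\beta\cup c_1(L)$, which fails to determine $\beta$ since $c_1(L)$ is typically a zero divisor in $A(X)$ — literally zero when $L$ is trivial. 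This ambiguity is resolved on $\Ps^\infty$ precisely because $u$ is a nonzerodivisor there, but transporting the conclusion back down needs two further moves that you should not suppress: (i) project from $A(\Ps^\infty)$ to $A(\Ps^N)$ so as to have a bona fide variety, and (ii) for a general $L$ on quasi-projective $X$, factor $L \cong L_1\otimes L_2^{\vee}$ with $L_1,L_2$ very ample, pull back the $\Ps^N$-identity along the resulting honest classifying maps $X\to\Ps^N$, and combine using the formal group law $c_1(L_1\otimes L_2^{\vee}) = F_A(c_1(L_1), i_A(c_1(L_2)))$. There is no map $X\to\Ps^\infty$ in the category of varieties — this is exactly the difficulty the paper wrestles with in Section \ref{deligneSullivanEmbeddingBundletrick} for higher-rank bundles — so for line bundles the correct substitute is the very-ample twisting just described. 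This is a genuine step of the argument, not a tautology, and in a complete write-up it should be made explicit.
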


Smirnov actually only requires the embedding to be proper on the support $X$ for the theorem to hold (i.e., $f$ need not be a closed immersion, but $f'$ must be). This allows us to deal with quasi-projective varieties in general; however, we will only work with ambient spaces given by projective varieties.

Recall that motivic cohomology is isomorphic to higher Chow groups for smooth varieties. If $f: X \rightarrow Y$ is a proper map, then the map $f \times Id: X \times  \Delta^i \rightarrow Y \times  \Delta^i$ is proper. This induces a pushforward map on algebraic cycles $(f \times Id)_*: z_n(X,i) \rightarrow z_n(Y,i)$ given by sending a closed subscheme $W$ to $[k(W):k((f\times Id)(W))](f \times id)(W)$ if the dimension of $(f\times Id)(W)$ equals the dimension of $W$, and $0$ otherwise. This induces a map $f_!$ on higher Chow groups.

\begin{corollary}\label{WuformulaforSimplicialSteenrodOps}
    Let $\phi = Q^\bullet$ be defined mod $\ell = p$, let $A$ be given by motivic cohomology, and let $f_!$ be the proper pushforward for a diagram as in the hypothesis of Theorem \ref{smirnovEmbeddingTheorem}. Then we have
    \begin{equation*}
        Q^\bullet(f_!(\alpha)) = f_!(\phi(\alpha) \smile c_{\textnormal{top}}^{p-1}(N_PP'))
    \end{equation*}
    where $c_{\textnormal{top}}(N_PP')$ is the top Chern class of the normal bundle $N_PP'$.
\end{corollary}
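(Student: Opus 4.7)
The plan is to derive the corollary as an immediate specialization of Smirnov's relative Wu formula (Theorem \ref{smirnovEmbeddingTheorem}), with the inverse Todd genus computation from Proposition \ref{inverseToddGenusofSimplicialOperations} as the only substantive input. First I would verify that the hypotheses of Theorem \ref{smirnovEmbeddingTheorem} are satisfied in our situation: mod $p$ motivic cohomology $\oplus H^*(-,\Z/p(*))$ is a ring cohomology theory in the sense of Panin--Smirnov (Section \ref{AxiomaticApproach}), and it carries a Chern structure from Section \ref{chernclasses}, hence by the Panin--Smirnov equivalence an integration; by Theorem \ref{levineIntegrationIsUnique} this extends uniquely to an integration with supports, which for the closed embeddings $i: X \hookrightarrow P$, $i': X' \hookrightarrow P'$ and $f$ of the hypothesis agrees with the backwards map $f_!$ of Definition \ref{backwardsmap}. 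I would also note that the total simplicial Steenrod power $Q^\bullet = \sum_s Q^s$ is a ring operation in Smirnov's sense: contravariance is Lemma \ref{GeneralContravarianceSteenrodOpsBloch}, and multiplicativity with respect to both the external and the cup products (with and without supports) follows from the Cartan formula in Theorem \ref{SteenrodOps}(4) applied to the $\mathcal{E}_\infty$-algebra structure on cycle complexes with support (Corollary \ref{cyclesWithSupportEinfAlgebra}).

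Once these verifications are in place, I would apply Theorem \ref{smirnovEmbeddingTheorem} with $\phi = Q^\bullet$ to obtain
\[
Q^\bullet(f_!(\alpha)) \;=\; f_!\bigl(Q^\bullet(\alpha) \cup itd_{Q^\bullet}(N_PP')\bigr)
\]
for any $\alpha \in H^*_X(P,\Z/p(*))$. It then remains only to identify $itd_{Q^\bullet}(N_PP')$. Proposition \ref{inverseToddGenusofSimplicialOperations} gives, for any rank $n$ vector bundle $E$ over a smooth variety, $itd_{Q^\bullet}(E) = c_n(E)^{p-1} = c_{\textnormal{top}}(E)^{p-1}$; specializing to $E = N_PP'$ yields the desired $c_{\textnormal{top}}^{p-1}(N_PP')$ and completes the formula.

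The main obstacle I anticipate is not in the algebraic manipulation but in checking that the ambient framework really applies for $\ell = p$. Specifically, one should be careful that the Panin--Smirnov machinery (projective bundle formula, uniqueness of the integration with supports, compatibility with cup product in the support sense) all go through mod $p$ without invoking resolution of singularities, and that the operation $Q^\bullet$ survives the deformation-to-the-normal-cone reduction used implicitly in Smirnov's proof; since the morphisms encountered in that reduction (blow-up with smooth center, projections from projective bundles, zero section of the normal bundle) are flat or satisfy miracle flatness between smooth varieties, Lemma \ref{GeneralContravarianceSteenrodOpsBloch} suffices to make the argument go through. A secondary subtlety is that the Todd genus itself is \emph{not} well-defined in the sense of Definition \ref{toddgenusdefinition} (since $itd_{Q^\bullet}(u) = u^{p-1}$ has vanishing free term and is not invertible), but Theorem \ref{smirnovEmbeddingTheorem} only uses the inverse Todd class, so this obstruction is irrelevant to the formula being stated.
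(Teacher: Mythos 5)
Your proposal is correct and lands on the same essential content as the paper, but the presentation differs in a way worth noting. You cite Theorem \ref{smirnovEmbeddingTheorem} as a black box and then substitute the inverse Todd genus from Proposition \ref{inverseToddGenusofSimplicialOperations}. The paper, by contrast, does not invoke Theorem \ref{smirnovEmbeddingTheorem} directly in the proof of the corollary: it reproves the Wu formula for the specific operation $Q^\bullet$ mod $p$, writing $i_!(\alpha) = \varphi\bigl(\pi^*\alpha \cup \tau\bigr)$ via Proposition \ref{embeddingPushforwardIsGivenByThomclass} (deformation to the normal cone plus the Thom class), commuting $Q^\bullet$ past $\varphi$ and $\pi^*$ by naturality, expanding $Q^\bullet(\pi^*\alpha \cup \tau) = \pi^*Q^\bullet\alpha \cup Q^\bullet\tau$ by the Cartan formula, and then using the identity $Q^\bullet\tau = \tau \cup \pi^*\bigl(itd_{Q^\bullet}(N_XY)\bigr)$, which is the definition of the inverse Todd genus traced through the Thom/Chern correspondence. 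In other words, the paper carries out the verification you flag as the ``main obstacle'' rather than delegating it to Smirnov's theorem. This choice is deliberate precisely because the paper wants to be sure every step survives mod $p$ without importing anything that secretly depends on resolution of singularities or a well-defined Todd genus.

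Your closing observation is particularly good and worth emphasizing: $itd_{Q^\bullet}(u) = u^{p-1}$ has no constant term, so the Todd genus is not well-defined mod $p$ in the sense of Definition \ref{toddgenusdefinition}, which rules out applying Theorem \ref{smirnovFormuoliTheorem} (and hence rules out pushing forward the mod $p$ operations along general proper maps). The closed-embedding case survives exactly because Theorem \ref{smirnovEmbeddingTheorem} only needs the inverse Todd genus. This is precisely the structural reason the paper can obtain a mod $p$ Wu formula for closed embeddings but not a full Riemann--Roch formula, as the discussion following Theorem \ref{vanishingTheoremforSmooth} spells out.

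One small point of care: in checking that $Q^\bullet$ commutes with the deformation-to-the-normal-cone maps, the relevant morphisms are the inclusions $i_0, i_1$ of the fibers of $\rho: M \to \A^1$ and the bundle projection $\pi: N_XY \to X$. The latter is flat; the former are closed immersions between smooth varieties, so Lemma \ref{GeneralContravarianceSteenrodOpsBloch} (not miracle flatness) is what gives contravariance there. Your invocation of miracle flatness is not needed for those maps and is slightly off the mark, but it does not affect the validity of the argument since the paper's contravariance lemma covers all the maps in question.
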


Theorem \ref{smirnovEmbeddingTheorem} deals with the closed embedding case only; Theorem \ref{smirnovFormuoliTheorem} generalizes to any proper map. However, Theorem \ref{smirnovFormuoliTheorem} requires an additional hypothesis on the operation $\phi$; namely, $\phi$ must have well-defined Todd genus.

\begin{theorem}\label{smirnovFormuoliTheorem}
Let $X \xhookrightarrow{i} P$ and $X' \xhookrightarrow{i'} P'$ be closed immersions into smooth varieties $P$ and $P'$, and suppose we have proper maps $f$ and $f'$ such that the following diagram commutes:
\begin{center}
\begin{tikzcd}
P \arrow[r, "f"]
& P' \\
X \arrow[r, "f'"] \arrow[hookrightarrow, u,"i"]
& X' \arrow[hookrightarrow, u, "i'"].
\end{tikzcd}
\end{center}
Suppose we have an operation $\phi$ on a ring cohomology theory $A$ equipped with a transfer (or equivalently a Chern structure) so that the proper pushforward $f_!$ on $A$ is defined. Suppose that $\phi$ has well-defined Todd genus. Then we have
\begin{equation*}
    \phi(f_!(\alpha)) \smile td_\phi(TP') = f_!(\phi(\alpha) \smile td_\phi(TP))
\end{equation*}
where $TP'$ and $TP$ denote the tangent bundles of $P'$ and $P$ respectively, and $\alpha \in A_X(P)$. We call this equation a \textbf{Riemann--Roch type formula}.
\end{theorem}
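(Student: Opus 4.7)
The plan is to combine Levine's decomposition of projective morphisms (Proposition~\ref{integrationDecompose}) with the relative Wu formula already established for closed embeddings (Theorem~\ref{smirnovEmbeddingTheorem}). First I would check that the Riemann--Roch identity is \emph{stable under composition}: if $f = g\circ h$ and the identity holds separately for $g$ and for $h$, then applying the identity for $g$ to $h_!\alpha$ and substituting the identity for $h$ on the right-hand side immediately yields the identity for $f$, with no further input needed. Since every proper (hence projective, by the quasi-projectivity hypothesis) morphism between smooth varieties factors as a closed immersion into $\mathbb{P}^n\times P'$ followed by the projection onto $P'$, it is enough to treat these two cases separately.

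For a closed immersion $i : P \hookrightarrow P'$, I would start from the Wu formula $\phi(i_!\alpha) = i_!(\phi(\alpha)\cup itd_\phi(N_PP'))$. Cupping with $td_\phi(TP')$ and applying the projection formula with supports yields
\[ \phi(i_!\alpha)\cup td_\phi(TP') = i_!\bigl(\phi(\alpha)\cup itd_\phi(N_PP')\cup i^*td_\phi(TP')\bigr). \]
The normal bundle short exact sequence $0 \to TP \to i^*TP' \to N_PP' \to 0$, combined with Whitney multiplicativity of $td_\phi$ (a consequence of the splitting principle, available through the projective bundle formula), gives $i^*td_\phi(TP') = td_\phi(TP)\cup td_\phi(N_PP')$. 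Since $itd_\phi(N_PP')\cup td_\phi(N_PP') = 1$, the bracketed expression collapses to $\phi(\alpha)\cup td_\phi(TP)$, proving the closed-embedding case.

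For the projection $\pi: \mathbb{P}^n\times Y \to Y$, the tangent bundle splitting $T(\mathbb{P}^n\times Y) \cong \mathrm{pr}_1^*T\mathbb{P}^n \oplus \pi^*TY$, together with the projection formula, reduces the identity to $\phi(\pi_!\alpha) = \pi_!\bigl(\phi(\alpha)\cup \mathrm{pr}_1^*td_\phi(T\mathbb{P}^n)\bigr)$. By the projective bundle formula, every $\alpha \in A_X(\mathbb{P}^n\times Y)$ is uniquely of the form $\sum_{j=0}^n \mathrm{pr}_1^*(u^j)\cup \pi^*\beta_j$ with $u = c_1(\mathcal{O}(1))$; linearity, naturality of $\phi$, and the projection formula then collapse the assertion to a universal identity on $\mathbb{P}^n$ itself. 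The Euler sequence $0 \to \mathcal{O} \to \mathcal{O}(1)^{n+1} \to T\mathbb{P}^n \to 0$ expresses $td_\phi(T\mathbb{P}^n)$ as a polynomial in $u$ via the series $td_\phi(u) = u/\phi(u)$, and the identity then follows from the characterization of $\pi_!$ as the unique integration extending the given Chern structure (Theorem~\ref{levineIntegrationIsUnique}) together with the standard values $\pi_!(u^j) = \delta_{j,n}$.

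The main obstacle is the projection step. While the reduction to a universal identity on $\mathbb{P}^n$ is conceptually clean, carrying it out requires controlling how $\phi$ acts on all powers $u^j$; since $\phi$ is only assumed to be a ring operation (not prescribed term-by-term), this forces one to unfold the Cartan-type multiplicativity of $\phi$ against the formal series $td_\phi(u)$ and the explicit pushforward values $\pi_!(u^j)$. It is precisely here that the Panin--Smirnov--Levine axiomatics does genuine computational work rather than serving as formal bookkeeping.
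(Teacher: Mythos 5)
Your overall strategy --- factor $f$ as a closed immersion into $\mathbb{P}^n\times P'$ followed by the projection, prove the Riemann--Roch identity separately in each case, and compose --- is exactly the strategy described in the paper and used in Smirnov's proof (and it goes back to Atiyah--Hirzebruch's differentiable Riemann--Roch). The compositionality observation is correct, and the closed-immersion half is complete and matches the paper: starting from the relative Wu formula, cupping with $td_\phi(TP')$, using the projection formula, the normal bundle sequence, and Whitney multiplicativity of $td_\phi$ is precisely the right cancellation.

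The substantive gap is in the projection case, and it is sharper than the place you flag. You invoke ``the standard values $\pi_!(u^j) = \delta_{j,n}$'' as if they held for any Panin--Smirnov ring cohomology theory equipped with a Chern structure, but that equality is a feature of theories whose Chern structure has \emph{additive} formal group law ($c_1(L\otimes L') = c_1(L)+c_1(L')$), such as higher Chow groups, where the dimension shift in $\pi_!$ kills every term with $j<n$. For a theory with a different formal group law it simply fails: in algebraic $K$-theory --- one of the examples the paper lists --- already $\pi_!(1) = \chi(\mathbb{P}^n,\mathcal{O}_{\mathbb{P}^n}) = 1 \neq 0 = \delta_{0,n}$. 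The correct general statement is that the values $\pi_!(u^j)$ are dictated by the formal group law attached to the Chern structure, and the ``universal identity on $\mathbb{P}^n$'' you reduce to is then a residue/Lagrange-inversion identity intertwining $\phi$, the series $td_\phi$, and that formal group law. This is the genuinely nontrivial content of Smirnov's projection step; it does not ``follow from'' the uniqueness of the integration (Theorem~\ref{levineIntegrationIsUnique}), which guarantees a compatible pushforward exists but says nothing about its values on the powers $u^j$.

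It is worth noting that the paper does not close this gap either: Theorem~\ref{smirnovFormuoliTheorem} is quoted from \cite{smirnov2007riemann}, the paper explicitly acknowledges that ``for more general ring cohomology theories $A$, the situation is more complicated'' and that $\pi_!$ is ``determined by [the] formal group law,'' and the explicit computation $\pi_!(a\otimes t^i) = \delta_{i,n}\,a$ is carried out only after specializing to higher Chow groups. So your closed-embedding argument proves what the paper actually proves, while your projection argument would need either to cite Smirnov --- as the paper does --- or to carry out the formal-group-law residue computation for arbitrary $A$, which is more than the sentence you gave supplies. (A minor additional point: the projective bundle formula you invoke requires the support in $\mathbb{P}^n\times Y$ to be of product form $Z\times\mathbb{P}^n$, whereas $i(X)$ is not; one first enlarges the support using the integration-with-supports axioms, as the paper's diagram with $A_{Z'\times\mathbb{P}^n}(Y\times\mathbb{P}^n)$ makes explicit.)
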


Theorems \ref{smirnovEmbeddingTheorem} and \ref{smirnovFormuoliTheorem} again hold when one has two different cohomology theories with an operation $\phi$ that is able to compare the two appropriately, such as $K$-theory and singular cohomology, with the Chern character. In this classical setting, the Todd genus is the usual Todd class, and one recovers the Grothendieck--Riemann--Roch formula. We also immediately obtain the following corollary from Lemma \ref{inverseToddgenusforModL}.

\begin{corollary}\label{GRRforModL}
    Let $\phi = Q^{\bullet}$ be defined for mod $\ell \neq p$, or $\phi = P^\bullet$ for any prime $\ell$. Let $A$ be motivic cohomology and let $f_!$ be the proper pushforward for a diagram as in the hypothesis of Theorem \ref{smirnovFormuoliTheorem}. Then we have
\begin{equation*}
    \phi(f_!(\alpha)) \smile td_\phi(TP') = f_!(\phi(\alpha) \smile td_\phi(TP))
\end{equation*}
where $TP'$ and $TP$ denote the tangent bundles of $P'$ and $P$ respectively, and $\alpha \in H_X^*(P,\Z/\ell(*))$.
\end{corollary}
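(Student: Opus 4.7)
The plan is to obtain Corollary \ref{GRRforModL} as a direct application of Smirnov's general Riemann--Roch formula (Theorem \ref{smirnovFormuoliTheorem}), so the real work is to check that each hypothesis of that theorem is satisfied in the motivic setting.

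First I would verify that mod $\ell$ motivic cohomology $A(X) := \bigoplus_{i,j} H^i(X,\Z/\ell(j))$ (with supports $A_Z(X) := \bigoplus_{i,j} H^i_Z(X,\Z/\ell(j))$) is a ring cohomology theory in the sense of Panin--Smirnov, equipped with an integration with supports. This was already established in Section \ref{AxiomaticApproach}: localization, \etale excision, and homotopy invariance come from Propositions \ref{localizationProperty}, \ref{etaleexcision}, and \ref{localizationProperty}(4), all preserved on passing to mod $\ell$ coefficients since the cycle complexes are flat; the ring structure and partial Leibniz rule are supplied by the external and cup products of Section \ref{higherChowgroupssection} (with supports treated via Appendix \ref{appendix}); and the backwards map $f_!$ of Definition \ref{backwardsmap} together with its lift to an integration with supports (unique by Theorem \ref{levineIntegrationIsUnique}) provides the transfer.

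Next I would check that $\phi = Q^\bullet$ (for $\ell \neq p$) and $\phi = P^\bullet$ (for any $\ell$) are \emph{operations} in the required sense. Multiplicativity with respect to external product and cup product is a restatement of the Cartan formula, which holds for the simplicial operations by Theorem \ref{SteenrodOps}(4) and for the motivic operations by Voevodsky's construction; naturality with respect to pullbacks of smooth varieties is Theorem \ref{SteenrodOps}(1) (extended to arbitrary maps by Lemma \ref{GeneralContravarianceSteenrodOpsBloch}) for $Q^\bullet$, and is built into Voevodsky's construction for $P^\bullet$. Compatibility with supports follows because our \Einf-algebra structure on cycles with support is inherited from the ambient one via Lemma \ref{kernelIsEinfty}, and the operations therefore commute with the maps appearing in the localization sequence.

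Then I would verify the remaining hypothesis of Theorem \ref{smirnovFormuoliTheorem}: that $\phi$ has \emph{well-defined} Todd genus, i.e.\ that $itd_\phi(u) \in A(\textnormal{Spec}\,k)[[u]]$ has invertible constant term. This is immediate from Lemma \ref{inverseToddgenusforModL}, which gives $itd_\phi(u) = 1 + u^{\ell-1}$ with constant term $1$, a unit. At this point all hypotheses of Theorem \ref{smirnovFormuoliTheorem} are met, and the formula
\begin{equation*}
    \phi(f_!(\alpha)) \cup td_\phi(TP') = f_!\bigl(\phi(\alpha) \cup td_\phi(TP)\bigr)
\end{equation*}
follows verbatim from that theorem applied to the square in the hypothesis and to $\alpha \in A_X(P) = H^*_X(P,\Z/\ell(*))$.

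The main obstacle, if there is one, is not in the formal invocation but in the verification hiding behind it: namely, that all of the Panin--Smirnov axioms for an integration with supports really hold for the backwards map $f_!$ defined via twisted Poincar\'e duality, including the projection formula with supports and proper base change with supports. These were asserted in Section \ref{AxiomaticApproach} by reference to the Appendix and to Lemma \ref{fiberSquareBaseChange1}, and the main step of the proof is simply to cite them; once these are in hand, the corollary is a one-line consequence of Theorem \ref{smirnovFormuoliTheorem} combined with the Todd-genus computation of Lemma \ref{inverseToddgenusforModL}.
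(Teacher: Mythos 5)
Your proposal is correct and follows exactly the paper's route: the corollary is stated as an immediate consequence of Theorem \ref{smirnovFormuoliTheorem} once one knows the Todd genus is well-defined, which Lemma \ref{inverseToddgenusforModL} supplies by showing $itd_\phi(u) = 1 + u^{\ell-1}$ has unit constant term. Your more careful enumeration of the Panin--Smirnov hypotheses (ring cohomology theory, integration with supports, multiplicativity of $\phi$ via the Cartan formula) simply makes explicit what the paper has already established in Sections \ref{AxiomaticApproach} and \ref{RiemannRochTypeFormulasSection} and takes for granted in this ``immediate'' deduction.
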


The proof of Theorem \ref{smirnovFormuoliTheorem} is in the same spirit as Satz 3.2 of Atiyah--Hirzebruch \cite{atiyah1961cohomologie} and other Riemann--Roch type formulae where one first realizes the given projective morphism as the composition of a closed embedding and a projection, then proves the formula for each case separately. The case of a closed embedding is exactly Theorem \ref{smirnovEmbeddingTheorem}. However, one needs to take care formalizing the argument for cohomology with support on a singular subvariety; to do this, we employ the formalism of Panin--Smirnov \cite{panin2009oriented}, \cite{panin2004riemann} and Levine's extension of their theory to the case of supports (Section 1 of \cite{levine2008oriented}). 


\subsubsection{Closed embeddings}
We analyze the proof of Theorem \ref{smirnovEmbeddingTheorem} for our operations $Q^\bullet$ mod $\ell$, for any prime $\ell$. First we recall the deformation to the normal cone (see Chapter 5 in \cite{fulton2013intersection}).

Let $X$ be a closed subvariety of $Y$. Then there is a family of embeddings $X \hookrightarrow Y_t$ parameterized by $t \in \mathbb{P}^1$ (resp. $t \in \mathbb{A}^1$), such that for $t \neq \infty$ (resp. $t \neq 0$), the embedding is the given closed immersion $X \hookrightarrow Y$, and for $t = \infty$ (resp. $t = 0$), the embedding is the zero section of $X$ in the normal cone $N_XY$. Moreover, if $X$ is smooth, then the normal cone is actually a normal bundle. The family of embeddings is constructed by first building a flat family $M = M_XY$, with a closed embedding of $X \times \mathbb{P}^1$ in $M$ and a flat morphism $\rho: M \rightarrow \mathbb{P}^1$ so that the following diagram commutes:
\begin{center}
        \begin{tikzcd}
            X \times \mathbb{P}^1 \arrow[hook,rr] \arrow[dr,"\pi"] & & M \arrow[dl,"\rho"] \\
            & \mathbb{P}^1 &
        \end{tikzcd}
    \end{center}
where $\pi$ is the projection, and such that
\begin{enumerate}
\item over $\mathbb{P}^1 - \{\infty\} = \mathbb{A}^1$, we have $\rho^{-1}(\mathbb{A}^1) = Y \times \mathbb{A}^1$ and the embedding is the product embedding $X\times \mathbb{A}^1 \hookrightarrow Y \times \mathbb{A}^1$,
\item over $\infty$, the divisor $M_{\infty} = \rho^{-1}(\infty)$ is the sum of two effective Cartier divisors
\begin{equation*}
M_\infty = \mathbb{P}(N_XY\oplus 1)  +  Bl_XY,
\end{equation*}
where $\mathbb{P}(N_XY \oplus 1)$ is the hyperplane at infinity for the normal cone, and $Bl_XY$ is the blowup of $Y$ along $X$. We call the flat family $M$ the \textbf{deformation to the normal cone}.
\end{enumerate}

If we wish to parameterize with $t \in \mathbb{A}^1$ (as in 2.2.7 in \cite{panin2003oriented}), we obtain the same diagram
\begin{center}
        \begin{tikzcd}
            X \times \mathbb{A}^1 \arrow[hook,rr] \arrow[dr,"\pi"] & & M \arrow[dl,"\rho"] \\
            & \mathbb{A}^1 &
        \end{tikzcd}
    \end{center}
where the fiber of $\rho$ over $1 \in \mathbb{A}^1$ is canonically isomorphic to $Y$, and the embedding $X \times \mathbb{A}^1 \hookrightarrow M$ is the initial embedding $X \hookrightarrow Y$ over $1$, and the fiber of $\rho$ over $0$ is canonically isomorphic to $N_XY$, with the embedding over $0$ given by the zero section $X \hookrightarrow N_XY$. Thus we obtain maps of pairs
\begin{equation*}
    (N_XY, N_XY - X) \xrightarrow{i_0} (M, M- (X\times \mathbb{A}^1)) \xleftarrow{i_1} (Y, Y-X)
\end{equation*}
allowing us to relate the cohomology of the Thom space of the normal bundle, with the cohomology of the pair given by the ambient space modulo the complement of the subspace. In fact, for a ring cohomology theory, we have the following general theorem (Theorem 2.2 in \cite{panin2003oriented}, Lemma 1.14 in \cite{levine2008oriented}).

\begin{theorem}\label{normalBundleAmbientPairDeformationIsomorphism}
    For a ring cohomology theory $A$, the induced maps of pairs
    \begin{equation*}
        A_X(N_XY) \xleftarrow{i_0^*} A_{X \times \mathbb{A}^1}(M) \xrightarrow{i_1^*} A_X(Y)
    \end{equation*}
    are isomorphisms. Moreover, for each closed subset $Z \subset X$, the induced maps of pairs
    \begin{equation*}
        A_Z(N_XY) \xleftarrow{i_0^*} A_{Z \times \mathbb{A}^1}(M) \xrightarrow{i_1^*} A_Z(Y)
    \end{equation*}
    are isomorphisms.
\end{theorem}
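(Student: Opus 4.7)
The plan is to establish both pullbacks as isomorphisms by combining localization, \'etale excision, and $\mathbb{A}^1$-invariance with the geometric structure of the deformation space $M$.

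First I would reduce the general statement (arbitrary closed $Z \subset X$) to the case $Z = X$ by Noetherian induction on the codimension of $Z$ in $X$, using the localization long exact sequences for the supports $Z \subset X$ in each of $N_XY$, $M$, and $Y$. A five-lemma comparison via the pullback maps $i_0^*$ and $i_1^*$, combined with the theorem applied to the embedding $X - Z \hookrightarrow Y - Z$ of smooth open subsets (which provides the ``generic'' isomorphism in the sequence), yields the general case from the $Z = X$ case.

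For the $Z = X$ case, the key geometric input is that the normal bundle $N_{X \times \mathbb{A}^1}(M)$ is canonically isomorphic to $\pi^* N_XY$ along the projection $\pi: X \times \mathbb{A}^1 \to X$, since the $\mathbb{A}^1$-direction is a trivial normal direction in the deformation to the normal cone. Using this, in a suitable \'etale neighborhood of $X \times \mathbb{A}^1 \subset M$, the pair $(M, X \times \mathbb{A}^1)$ is identified with the model pair $(N_XY \times \mathbb{A}^1, X \times \mathbb{A}^1)$. \'Etale excision (axiom (2) of Definition \ref{generalizedcohomologytheory}) then gives $A_{X \times \mathbb{A}^1}(M) \cong A_{X \times \mathbb{A}^1}(N_XY \times \mathbb{A}^1)$, and homotopy invariance applied to the projection $N_XY \times \mathbb{A}^1 \to N_XY$ identifies this with $A_X(N_XY)$, proving $i_0^*$ is an isomorphism. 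For $i_1^*$, the analogous argument applied to the pair $(Y, X)$—which is \'etale-locally of the form $(N_XY, X)$ near $X$ by the same normal bundle computation—yields an identification $A_X(Y) \cong A_X(N_XY)$, and compatibility of the two trivializations at the $t = 1$ fiber produces the desired isomorphism.

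The main obstacle is the \'etale-local identification of $(M, X \times \mathbb{A}^1)$ with $(N_XY \times \mathbb{A}^1, X \times \mathbb{A}^1)$, and similarly of $(Y, X)$ with $(N_XY, X)$. The formal-local versions of these identifications are immediate from the normal bundle computation, but promoting them to \'etale-local statements is subtle and, in full generality, requires either an appeal to Artin approximation or a direct construction. Following Panin \cite{panin2003oriented} and Levine \cite{levine2008oriented}, a clean approach bypasses the explicit trivialization by handling the two fiber inclusions independently, using a Mayer--Vietoris decomposition of $M$ coming from affine opens of the base $\mathbb{A}^1$ around $0$ and $1$ together with homotopy invariance applied to the restriction $\rho^{-1}(U) \to U$ on each piece.
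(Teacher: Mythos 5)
The paper does not give its own proof of this statement; it cites Theorem 2.2 of Panin \cite{panin2003oriented} and Lemma 1.14 of Levine \cite{levine2008oriented}, so there is no in-paper argument to compare against. On its own terms, your reduction from arbitrary closed $Z \subset X$ to $Z = X$ is sound in outline, though the Noetherian induction is unnecessary: the $Z = X$ case applied to the auxiliary smooth pair $(Y - Z, X - Z)$, together with excision along open immersions to identify the third terms of the localization triangles ($A_{X-Z}(N_XY - Z)$, $A_{(X-Z)\times\mathbb{A}^1}(M - Z\times\mathbb{A}^1)$, $A_{X-Z}(Y-Z)$) with the corresponding terms for $(Y-Z, X-Z)$, already gives the general case via the five lemma.

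The $Z = X$ case, however, has a real gap. Your fallback ``clean approach''---Mayer--Vietoris for $M$ indexed by affine opens of the base $\mathbb{A}^1$ near $0$ and $1$, with homotopy invariance applied to the restrictions $\rho^{-1}(U) \to U$---cannot work: the homotopy axiom only concerns $X \mapsto X \times \mathbb{A}^1$ for the full affine line and already fails for $\mathbb{G}_m = \mathbb{A}^1\setminus\{0\}$, so the pieces $\rho^{-1}(U)$ for $U \subsetneq \mathbb{A}^1$ cannot be contracted onto their central fibers. The \'etale-local identification you propose is closer to what happens, but there is no single \'etale neighborhood identifying $(M, X\times\mathbb{A}^1)$ with $(N_XY\times\mathbb{A}^1, X\times\mathbb{A}^1)$, and Artin approximation is not what is used. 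What the cited proofs actually do is (i) reduce by Mayer--Vietoris \emph{on $Y$} to the local case where $Y$ admits an \'etale map $f: Y \to \mathbb{A}^d$ with $X = Y \times_{\mathbb{A}^d}\mathbb{A}^{d-c}$ (so that the local model deformation space $M_{\mathbb{A}^{d-c}}\mathbb{A}^d \cong \mathbb{A}^d\times\mathbb{A}^1$ is literally a product and the claim there is homotopy invariance), and (ii) exhibit a \emph{common} \'etale neighborhood of $X$ in $Y$ and in $N_XY$ as a suitable open of the fiber product $Y \times_{\mathbb{A}^d} N_XY$ containing the diagonal copy of $X$, applying the excision axiom to both projections; an analogous fiber-product argument handles $M$ itself mapping \'etale to $\mathbb{A}^d\times\mathbb{A}^1$. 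This fiber-product excision is where the real content lies and is absent from your sketch, which moreover does not address why the resulting abstract isomorphisms between the three groups coincide with the specific restriction maps $i_0^*$, $i_1^*$---that compatibility is precisely what needs to be proved, not something that falls out of ``compatibility of the two trivializations at $t=1$''.
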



It is now appropriate for us to define a \textbf{Thom structure} in the sense of Panin--Smirnov \cite{panin2000oriented} (Definition 3.3 in \cite{panin2003oriented}). It is essentially the notion of a Thom isomorphism.

\begin{definition}\label{definitionofThomstructure}
    A \textbf{Thom structure} on a ring cohomology theory $A$ is an assignment $L \mapsto \tau(L)$ for every smooth variety $X$ and every line bundle $L$ over $X$ an even degree element $\tau(L) \in A_X(L)$ satisfying the following properties:

    \begin{enumerate}
        \item (Functoriality) $\phi^*(\tau(L_1)) = \tau(L_2)$ for every isomorphism $\phi: L_1 \cong L_2$ of line bundles; moreover $f_L^*(\tau(L)) = \tau_{L_Y}$ for every morphism $f: Y \rightarrow X$ and each line bundle $L$ over $X$, where we write $L_Y = L \times_X Y$ as the pullback line bundle over $Y$ and $f_L: L_Y \rightarrow L$ is the projection;
        \item (Non-degeneracy) the map $-\smile \tau(1): A(X) \rightarrow A_X(X\times \mathbb{A}^1)$ is an isomorphism, where $X$ is viewed as the zero section of the trivial line bundle.
    \end{enumerate}
    The element $\tau(L) \in A_X(L)$ is called the \textbf{Thom class} of $L$.
    
\end{definition}

Recall we have a notion of Thom class for higher Chow groups from localization results. Moreover, by splitting principle, one extends the notion of Thom class for line bundles to general vector bundles. The main results of Panin--Smirnov \cite{panin2000oriented}, \cite{panin2003oriented}, \cite{panin2009oriented} are then that the notions of Chern structures, Thom structures, and proper pushforwards are all in bijection with each other for a given ring cohomology theory; that is, given any one of the structures, one can recover the other two uniquely. This extends to the case of ring cohomology with support by work of Levine in \cite{levine2008oriented}; one such result is as follows (see Theorem 1.12 and Proposition 1.15 in \cite{levine2008oriented}).

\begin{proposition}\label{embeddingPushforwardIsGivenByThomclass}
    Let $X \xhookrightarrow{i} Y$ be a closed immersion of smooth varieties and consider the maps
        \begin{equation*}
        A_X(N_XY) \xleftarrow{i_0^*} A_{X \times \mathbb{A}^1}(M) \xrightarrow{i_1^*} A(Y)
    \end{equation*}
    where $i_1^*$ is induced from the map of pairs $(Y, \varnothing) \xrightarrow{i_1} (M, M - (X\times \mathbb{A}^1))$. In this case, $i_0^*$ is still an isomorphism (though $i_1^*$ may not be). Moreover, the proper pushforward $i_!: A(X) \rightarrow A(Y)$ agrees with the composition 
    \begin{equation*}
        A(X) \xrightarrow{\Phi} A_X(N_XY) \xrightarrow{i_1^*\circ (i_0^*)^{-1}} A(Y)
    \end{equation*}
    where the map $\Phi$ is given by $A(X) \xrightarrow{\pi^*} A(N_XY) \xrightarrow{\smile \tau} A_X(N_XY)$, where $\tau$ is the Thom class of the normal bundle $N_XY \xrightarrow{\pi} X$, and $X$ is thought of as the zero section in $N_XY$. More generally, for any closed subset $Z \subset X$ and any closed subset $W \subset Y$ that contains $i(Z)$, we have the map 
    \begin{equation*}
        A_Z(N_XY) \xleftarrow{i_0^*} A_{Z \times \mathbb{A}^1}(M) \xrightarrow{i_1^*} A_W(Y)
    \end{equation*}
    where again $i_0^*$ is an isomorphism. In this case, the proper pushforward on cohomology with support $i_!: A_Z(X) \rightarrow A_W(Y)$ is given by the composition
    \begin{equation*}
        A_Z(X) \xrightarrow{\Phi} A_Z(N_XY) \xrightarrow{i_1^*\circ (i_0^*)^{-1}} A_W(Y)
    \end{equation*}
    where the map $\Phi$ is given by $A_Z(X) \xrightarrow{\pi^*} A_{\pi^{-1}(Z)}(N_XY) \xrightarrow{\smile\tau} A_Z(N_XY)$, where $\tau \in A_X(N_XY)$ is the Thom class of the normal bundle $N_XY$.
    
\end{proposition}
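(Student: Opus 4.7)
The plan is as follows. First, the claim that $i_0^*$ is an isomorphism in both statements is an immediate invocation of Theorem \ref{normalBundleAmbientPairDeformationIsomorphism}: for any closed $Z \subset X$, the map $i_0: N_XY \to M$ induces a morphism of pairs $(N_XY, N_XY - Z) \to (M, M - Z\times\A^1)$ satisfying $i_0^{-1}(Z\times\A^1) = Z$, which puts us exactly in the setting of that theorem. The real content of the proposition is the formula identifying $i_!$ with the displayed composition.

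To establish this formula, I would employ two Cartesian fiber squares coming from the flat morphism $\rho: M \to \A^1$. Let $j_M: X \times \A^1 \hookrightarrow M$ denote the embedding of the family, let $s: X \hookrightarrow N_XY$ denote the zero section, and let $\iota_t: X \hookrightarrow X \times \A^1$ denote the inclusion at $t \in \{0,1\}$. The squares
\begin{center}
\begin{tikzcd}
X \arrow[r, "i"] \arrow[d, "\iota_1"] & Y \arrow[d, "i_1"] \\
X \times \A^1 \arrow[r, "j_M"] & M
\end{tikzcd}
\qquad
\begin{tikzcd}
X \arrow[r, "s"] \arrow[d, "\iota_0"] & N_XY \arrow[d, "i_0"] \\
X \times \A^1 \arrow[r, "j_M"] & M
\end{tikzcd}
\end{center}
are transversal, since the fibers of $\rho$ over $0, 1 \in \A^1$ are $N_XY$ and $Y$ respectively, and in both squares the two normal bundles in question agree (after the appropriate pullback) with the trivial rank-one normal bundle of the point inclusion $\{t\} \hookrightarrow \A^1$. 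Applying the base change axiom of an integration with supports to each square yields the identities $i_! \circ \iota_1^* = i_1^* \circ (j_M)_!$ and $s_! \circ \iota_0^* = i_0^* \circ (j_M)_!$.

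By homotopy invariance, both $\iota_0^*$ and $\iota_1^*$ equal the inverse of the pullback $\pi^*: A(X) \to A(X\times\A^1)$ along the projection $\pi: X\times\A^1 \to X$, so they agree and are invertible. Eliminating $(j_M)_!$ from the two identities gives $i_! = i_1^* \circ (i_0^*)^{-1} \circ s_!$. To identify $s_!$ with $\Phi$, I would invoke the projection formula: for $\alpha \in A(X)$,
\begin{equation*}
s_!(\alpha) = s_!(s^*\pi^*\alpha \cup 1_X) = \pi^*\alpha \cup s_!(1_X) = \pi^*\alpha \cup \tau,
\end{equation*}
where $\tau := s_!(1_X) \in A_X(N_XY)$ is the Thom class of $N_XY$ by definition (compare Corollary \ref{thomIsomorphism}). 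This exhibits $s_! = \Phi$, completing the identification $i_! = i_1^* \circ (i_0^*)^{-1} \circ \Phi$.

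The general supported case (with $Z \subset X$, $W \subset Y$, $i(Z) \subset W$) proceeds by the identical argument after replacing $A(X), A(Y), A(N_XY)$ by $A_Z(X), A_W(Y), A_Z(N_XY)$ throughout, using that the hypothesis $i(Z) \subset W$ ensures all maps are morphisms of pairs with the stated supports. The main technical obstacle is careful bookkeeping of supports through the two fiber squares and verification that the base change axiom as formulated by Levine applies in the precise form used here, especially when $Z$ is singular; both checks amount to confirming that the relevant maps are morphisms of pairs with matching supports, which is routine from the explicit construction of $M$ as the deformation to the normal cone.
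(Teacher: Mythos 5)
Your proof is essentially correct and provides a genuine derivation where the paper offers only a citation: the paper states this proposition by reference to Levine's Theorem~1.12 and Proposition~1.15 in \cite{levine2008oriented}, and argues that any integration with supports must have this form by appealing to Levine's uniqueness result (the paper's Theorem~\ref{levineIntegrationIsUnique}) together with Levine's explicit construction. You instead derive the identity directly from the axioms of an integration with supports, which is closer in spirit to what Levine actually does in his proof, and makes the argument self-contained.

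A few points worth tightening. First, your transversality check is phrased in terms of the normal bundles of the \emph{vertical} maps $\iota_t: X \hookrightarrow X\times\mathbb{A}^1$ and $i_t: Y_t \hookrightarrow M$ (both rank-one, pulled back from $N_{\{t\}}\mathbb{A}^1$), whereas the paper's definition of a transversal square compares the normal bundles of the two \emph{horizontal} closed embeddings: it asks that $N_XY$ coincide with $\iota_t^*\bigl(N_{X\times\mathbb{A}^1}M\bigr)$, which follows because $N_{X\times\mathbb{A}^1}M \cong \rho^*N_XY$ is part of the construction of the deformation space. The two framings are equivalent for a clean fiber square, but you should be explicit that you are exploiting that equivalence rather than verifying the condition as literally stated. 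Second, you overload $\pi$ (used for both $X\times\mathbb{A}^1 \to X$ and $N_XY \to X$ in the same paragraph); this is cosmetic but worth fixing. Third, in the supported case, the precise bookkeeping is not quite automatic: for the square at $t=1$, the axiom applies with $j_M: (X\times\mathbb{A}^1, Z\times\mathbb{A}^1) \to (M, j_M(Z\times\mathbb{A}^1))$ in $\textbf{SmOp'}$ and $i_1: (Y, W) \to (M, j_M(Z\times\mathbb{A}^1))$ in $\textbf{SmOp}$, and the latter is a morphism of pairs precisely because $i_1^{-1}(j_M(Z\times\mathbb{A}^1)) = i(Z) \subset W$; the square at $t=0$ requires no hypothesis on $W$. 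You gesture at this but it deserves a sentence. With these clarifications, the argument is complete, and the identification $s_! = \Phi$ via the projection formula and $\tau = s_!(1_X)$ is exactly the definition of the Thom class associated to a proper pushforward, as the paper records just before Theorem~\ref{paninSmirnovMainTheorem}.
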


In other words, the proper pushforward for a closed embedding is exactly given by cupping with the Thom class (aka the Thom isomorphism), up to deformation of the normal cone. In the original context, Proposition \ref{embeddingPushforwardIsGivenByThomclass} is actually proven to show that, given a transfer structure, one can obtain a unique transfer with supports; Levine then constructs the transfer with supports explicitly. 


Let us now verify Corollary \ref{WuformulaforSimplicialSteenrodOps}, the relative Wu formula for our simplicial operations $Q^\bullet$ mod $p$.
\begin{proof}
    Let $X \xhookrightarrow{i} Y$ be a closed immersion. Then we have
    \begin{equation*}
        Q^\bullet(i_!\alpha) = Q^\bullet(\varphi\circ (\pi^*\alpha \smile \tau))
    \end{equation*}
    where $\varphi = i^*_1\circ (i_0^*)^{-1}$ commutes with $Q^\bullet$ by naturality of the operations, and $\tau$ is the Thom class of the normal bundle $N_XY$. After commuting $\varphi$ and $Q^\bullet$, we obtain
    \begin{equation*}
        \varphi(Q^\bullet(\pi^*\alpha \smile \tau)) = \varphi( \pi^*Q^\bullet\alpha \smile Q^\bullet\tau)
    \end{equation*}
    but now we note that $Q^\bullet\tau = \tau \smile \pi^*(itd_{Q^\bullet}(N_XY))$ by definition of the inverse Todd genus, the splitting principle, and tracing through the equivalences between Thom structures and Chern structures. This leaves us with $\varphi(\pi^*Q^\bullet\alpha \smile \pi^*(itd_{Q^\bullet}(N_XY)) \smile \tau) = \varphi(\pi^*(Q^\bullet\alpha\smile itd_{Q^\bullet}(N_XY)) \smile \tau) = i_!(Q^\bullet\alpha \smile itd_{Q^\bullet}(N_XY))$. We have computed the inverse Todd genus in Proposition \ref{inverseToddGenusofSimplicialOperations}, which finishes the proof.
\end{proof}

\subsubsection{Projective bundles}\label{projectivebundlesection}

As mentioned, the strategy of the proof of Theorem \ref{smirnovFormuoliTheorem} involves decomposing a given proper map $f$ into a composition $f = \pi \circ i$ where $i$ is a closed embedding and $\pi$ is a projection from a projective bundle. More specifically, if $f: X \rightarrow Y$ is a proper map of smooth varieties, then we can decompose it as
\begin{center}
        \begin{tikzcd}
            & Y \times \mathbb{P}^n \arrow[dr, "\pi"] & \\
            X \arrow[rr,"f"] \arrow[hook,ur,"f \times e"] & & Y 
        \end{tikzcd}
    \end{center}
where the map $e$ is some choice of embedding of $X \hookrightarrow \mathbb{P}^n$ (recall all varieties are quasiprojective; since the map $f$ is proper, the map $i = f \times e$ will be a closed embedding). Recall also that the choice of decomposition does not matter (Proposition \ref{integrationDecompose}). The resulting diagram on cohomology with supports is then
\begin{center}
        \begin{tikzcd}
            & A_{Z' \times \mathbb{P}^n}(Y \times \mathbb{P}^n) \arrow[dr, "\pi_!"] & \\
            A_Z(X) \arrow[rr,"f_!"] \arrow[hook,ur,"(f \times e)_!"] & & A_{Z'}(Y) 
        \end{tikzcd}
    \end{center}
where the maps are proper on the closed supports $Z \subset X$ and send $Z$ into $Z' \subset Y$. The proof of Theorem \ref{smirnovFormuoliTheorem} then involves proving the formula for the closed embedding case and projective bundle case separately. We have already dealt with the embedding case, and this section will cover the projective bundle case.

Suppose $X \subset P$ is a closed subvariety with $P$ smooth. We have the following projective bundle theorem for the case of supports (Corollary 3.17 in \cite{panin2003oriented}, Remark 1.17 in \cite{levine2008oriented}), by using the exact sequence for the pairs $(P,X)$ and $(P \times \mathbb{P}^n, X \times \mathbb{P}^n)$.

\begin{theorem}
    Let $X \xhookrightarrow{i} P$ be a closed immersion with $P$ smooth. For any closed subvariety $Y \subset P$ that also contains $X$, we have an isomorphism of $A_Y(P)$-modules
    \begin{equation*}
        A_{X \times \mathbb{P}^n}(P \times \mathbb{P}^n) \cong A_X(P) \otimes_{A(P)} A(P)[t]/(t^{n+1})
    \end{equation*}
    with $a \otimes t^i$ mapping to $\pi^*(a) \smile c_1(\mathcal{O}(1))^i$, where $\pi: P \times \mathbb{P}^n \rightarrow P$ is the projection.
\end{theorem}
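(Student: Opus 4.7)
The plan is to reduce the statement to the non-supported projective bundle theorem (Theorem 3.9 in \cite{panin2003oriented}, already in force for ring cohomology theories) by plugging it into the long exact sequence of the pair $(P \times \mathbb{P}^n,\, (P-X) \times \mathbb{P}^n)$ and tracking the decomposition through the boundary maps.

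First, I would write down the localization long exact sequence for the pair above:
\begin{equation*}
\cdots \to A_{X\times\mathbb{P}^n}(P\times\mathbb{P}^n) \xrightarrow{\iota} A(P\times\mathbb{P}^n) \xrightarrow{j^*} A((P-X)\times\mathbb{P}^n) \xrightarrow{\partial} A_{X\times\mathbb{P}^n}(P\times\mathbb{P}^n)[1] \to \cdots
\end{equation*}
By the projective bundle formula applied both to $P\times\mathbb{P}^n \to P$ and to $(P-X)\times\mathbb{P}^n \to (P-X)$, together with functoriality of $c_1(\mathcal{O}(1))$ under the open immersion $(P-X)\times\mathbb{P}^n \hookrightarrow P\times\mathbb{P}^n$, the map $j^*$ is identified with the direct sum
\begin{equation*}
\bigoplus_{i=0}^{n} j^*_P \colon \bigoplus_{i=0}^{n} A(P)\cdot c_1(\mathcal{O}(1))^i \longrightarrow \bigoplus_{i=0}^{n} A(P-X)\cdot c_1(\mathcal{O}(1))^i
\end{equation*}
where $j_P\colon P-X \hookrightarrow P$ is the open inclusion.

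Next, using the partial Leibniz rule from Definition \ref{ringcohomologydefinition}, the boundary $\partial$ is compatible with cup product by $\pi^* c_1(\mathcal{O}(1))^i$ (pulled back from $P\times\mathbb{P}^n$, which itself is the restriction of a class defined on all of $P\times\mathbb{P}^n$). Consequently the entire long exact sequence splits as a direct sum of $n+1$ shifted copies of the long exact sequence of the pair $(P, P-X)$, tensored with $c_1(\mathcal{O}(1))^i$ for $i = 0, \ldots, n$. Extracting the kernel of $j^*$ in each summand yields a canonical graded identification
\begin{equation*}
A_{X\times\mathbb{P}^n}(P\times\mathbb{P}^n) \;\cong\; \bigoplus_{i=0}^{n} A_X(P)\cdot c_1(\mathcal{O}(1))^i \;\cong\; A_X(P) \otimes_{A(P)} A(P)[t]/(t^{n+1}),
\end{equation*}
with $a \otimes t^i \mapsto \pi^*(a) \cup c_1(\mathcal{O}(1))^i$ by construction. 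Finally, the $A_Y(P)$-module structure on both sides is induced by cup product with $A_Y(P) \subset A(P)$, and the identification above is manifestly $A_Y(P)$-linear since all maps involved commute with cup product by classes pulled back from $P$.

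The only step that requires real care is verifying that the connecting homomorphism $\partial$ respects the direct sum decomposition, which in turn reduces to the partial Leibniz rule: since $c_1(\mathcal{O}(1))^i = \pi^*\xi^i$ extends to a global class on $P\times\mathbb{P}^n$, one has $\partial(\alpha \cup j^*\pi^*\xi^i) = \partial(\alpha) \cup \pi^*\xi^i$, which is precisely what is needed for the five-lemma comparison to split the sequence into $n+1$ independent pieces.
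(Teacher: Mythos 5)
Your proof is correct and follows the same route the paper sketches (the paper only cites Panin--Smirnov and Levine and remarks the argument goes ``by using the exact sequence for the pairs $(P,X)$ and $(P\times\mathbb{P}^n,X\times\mathbb{P}^n)$''): compare the two localization sequences via the maps $a\mapsto \pi^*a\cup\xi^i$, use the unsupported projective bundle theorem to see these are isomorphisms on the terms without support, use the partial Leibniz rule together with naturality of $\partial$ to check the comparison is a morphism of long exact sequences, and finish with the five lemma. The one small slip is notational: you write $\pi^*c_1(\mathcal{O}(1))^i$ with $\pi\colon P\times\mathbb{P}^n\to P$, but $\xi=c_1(\mathcal{O}(1))$ is pulled back along the projection to $\mathbb{P}^n$, not $P$; what you actually need (and use) is just that $\xi^i$ is a class defined on all of $P\times\mathbb{P}^n$ restricting to the corresponding class on $(P-X)\times\mathbb{P}^n$, so the argument is unaffected.
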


It suffices now to compute the proper pushforward $\pi_!$ induced by the projective bundle. For more general ring cohomology theories $A$, the situation is more complicated: one passes from cohomology of the projective bundle $A(P \times \mathbb{P}^n)$ to the inverse limit $A(P \times \mathbb{P}^\infty) := \varprojlim A(P \times \mathbb{P}^n)$ and one reduces the proof to proving the formula for the infinite projective bundle. For a ring cohomology theory $A$ equipped with a Chern structure, the fact that isomorphism classes of line bundles is a group yields a formal group law associated to the Chern structure; i.e., $c_1(L_1 \otimes L_2) = F_A(c_1(L_1),c_1(L_2))$ where $F_A$ is a formal group law over $A(\textnormal{Spec } k)$. The proper pushforward $\pi_!$, and by extension, the proof of the Riemann--Roch formula for the infinite projective bundle, is then determined by this formal group law.

Fortunately, we will now return to our original setting where our ring cohomology theory $A$ is given by higher Chow groups, equipped with the usual notion of Chern class. In this case, we have from normal Chow group theory that $c_1(L_1 \otimes L_2) = c_1(L_1) + c_1(L_2)$, and by definition the proper pushforward $\pi_!: A_X(P) \otimes_{A(P)} A(P)[t]/(t^{n+1}) \rightarrow A_X(P)$ is given by 
\begin{equation*}
    \pi_!(a\otimes t^i) = 
        \begin{cases}
        0 & \text{if $i \neq n$}\\
        a & \text{if $i = n$}
    \end{cases}
\end{equation*}
for an element $a \otimes t^i \in A_X(P) \otimes_{A(P)} A(P)[t]/(t^{n+1})$. Geometrically, the only subvarieties we are projecting are those originally in $P$, since the class $a \otimes t^n$ corresponds to the product of a given subvariety with the class of a point. We recall Corollary \ref{GRRforModL}.

\begin{corollary}
    For the mod $\ell \neq p$ operations $Q^\bullet$ on mod $\ell$ motivic cohomology, and a diagram as in Theorem \ref{smirnovFormuoliTheorem}, we have the following formula describing their failure to commute with proper pushforward:
    \begin{equation*}
         Q^\bullet(f_!(\alpha)) \smile td_{Q^\bullet}(TP') = f_!(Q^\bullet(\alpha) \smile td_{Q^\bullet}(TP))
    \end{equation*}
    where $\alpha \in H^*_X(P,*)$. 
\end{corollary}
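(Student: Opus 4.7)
The plan is to deduce the corollary directly from Smirnov's Riemann--Roch formula (Theorem \ref{smirnovFormuoliTheorem}) applied to the operation $\phi = Q^{\bullet}$ (or $P^{\bullet}$) on mod $\ell \neq p$ motivic cohomology. First I would verify that $Q^{\bullet}$ satisfies the hypotheses of that theorem: it is an operation on a ring cohomology theory in the sense of Panin--Smirnov, because the Cartan formula (property (4) of Theorem \ref{SteenrodOps}) says exactly that $Q^{\bullet}$ is compatible with the external (hence cup) product, and naturality gives the required functoriality. The compatibility with cohomology with support was established in the corollary extending Theorem \ref{SteenrodOps} to local motivic cohomology via the $E_{\infty}$-structure on the kernel (Lemma \ref{kernelIsEinfty}).

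Next I would check that $Q^{\bullet}$ has well-defined Todd genus in the sense of Definition \ref{toddgenusdefinition}. By Lemma \ref{inverseToddgenusforModL}, we have
\begin{equation*}
itd_{Q^{\bullet}}(u) = 1 + u^{\ell-1} \in A(\textnormal{Spec } k)[[u]],
\end{equation*}
whose constant term is $1$ and is therefore a unit. Hence the inverse Todd series is invertible, and $td_{Q^{\bullet}}(u) = u/Q^{\bullet}(u)$ is a well-defined formal power series; extending to bundles of higher rank via splitting principle gives a well-defined Todd class $td_{Q^{\bullet}}(E) \in A(X)$.

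With those two facts in hand, Theorem \ref{smirnovFormuoliTheorem} applies verbatim to the diagram of Definition \ref{backwardsmap}, yielding the desired formula. The only subtlety worth flagging is that Theorem \ref{smirnovFormuoliTheorem} is proved by the standard decomposition $f = \pi \circ i$ with $i$ a closed embedding and $\pi$ the projection from a trivial projective bundle, using Proposition \ref{integrationDecompose} to see that the decomposition does not affect $f_!$. The closed embedding step is the relative Wu formula (Theorem \ref{smirnovEmbeddingTheorem}), which I already verified for $Q^{\bullet}$ in the proof of Corollary \ref{WuformulaforSimplicialSteenrodOps} via deformation to the normal cone and naturality of the operations. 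The projective bundle step reduces, via the projective bundle formula for cohomology with support, to computing $\pi_!$ on classes of the form $a \otimes t^i$; here I would use that higher Chow groups carry the additive formal group law $c_1(L_1 \otimes L_2) = c_1(L_1) + c_1(L_2)$, so $\pi_!$ is determined explicitly and the Todd-genus correction matches the tangent bundle contribution via the short exact sequence $0 \to T_\pi \to T(P \times \mathbb{P}^n) \to \pi^{*}TP \to 0$.

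The main obstacle, strictly speaking, is not the computation but ensuring compatibility of hypotheses: one must confirm that $Q^{\bullet}$ really is an operation on the ring cohomology theory \emph{with supports} in Panin--Smirnov's sense, so that Theorem \ref{smirnovFormuoliTheorem} is applicable without modification. This is covered by the construction of the operations on local motivic cohomology and by the functoriality statements given there, so no new argument is required; the corollary then follows immediately.
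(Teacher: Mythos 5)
Your proposal is correct and matches the paper's approach: the paper also deduces the statement by verifying that $Q^{\bullet}$ mod $\ell \neq p$ has a well-defined Todd genus (Lemma \ref{inverseToddgenusforModL}, with $itd_{Q^\bullet}(u)=1+u^{\ell-1}$ having invertible constant term) and then invoking Theorem \ref{smirnovFormuoliTheorem} via the decomposition $f = \pi\circ i$, with the closed-embedding case handled by the Wu formula argument and the projective-bundle case handled using the additive formal group law. The only cosmetic difference is that the paper states the immediate consequence as Corollary \ref{GRRforModL} first and then re-derives the two cases explicitly in Sections 3.4.1--3.4.2, whereas you present the decomposition as a verification step inside the deduction.
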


Once one has this formula, one can mimic Brosnan's approach, as in \cite{brosnan2003steenrod} and \cite{brosnan2015comparison}, to define new operations that do commute with pushforward. One simply defines 
\begin{equation*}
    U^\bullet(\alpha) = Q^\bullet(\alpha) \smile td_{Q^\bullet}(TP)
\end{equation*}
again for a smooth variety $P$ with closed support $X$, and $\alpha \in H^*_X(P,*)$. Then, for diagrams as in Theorem \ref{smirnovFormuoliTheorem}, the operations $U^\bullet$ commute due to the Riemann--Roch type formula.

We can now dualize these cohomology operations to obtain homology operations on the higher Chow groups of a (possibly singular) subspace. Of course, this is now purely formal. However, doing so sheds some information on how they relate to the embedding $X \hookrightarrow P$.

\begin{definition}\label{HomologyDualOperations}
    Let $X \hookrightarrow P$ be a closed embedding of a (possibly singular) variety $X$ in a smooth variety $P$. We define operations $U_\bullet$ on the higher Chow groups of $X$ by defining
    \begin{equation*}
        U_s := (\chi\circ i_*)^{-1} \circ U^s \circ (\chi\circ i_*)
    \end{equation*}
    where $\chi\circ i_*: H_i(X,j) \xrightarrow{\simeq} H^{2d-i}_X(P,d-j)$ is Poincar\'e duality composed with localization. Similarly, for $\ell = p$, we can define 
    \begin{equation*}
        Q_s := (\chi\circ i_*)^{-1} \circ Q^s \circ (\chi\circ i_*)
    \end{equation*}
    where $Q^\bullet$ is defined for any prime $\ell$. Similarly, we write $P_s$ as the dual motivic operations.
\end{definition}

We note that the operations $Q_s$ are defined for any prime $\ell$, but for $\ell = p$, we do not have a Riemann--Roch type formula that describes how they interact with proper pushforward. For $\ell \neq p$, and for diagrams as in Theorem \ref{smirnovFormuoliTheorem}, we see that the operations $U_\bullet$ do commute with proper pushforward. We also note that $Q_s: H_i(X,j) \rightarrow H_{i-2s(\ell-1)}(X,\ell j - d(\ell-1))$ where $d$ is the dimension of the ambient space $P$.

\subsubsection{Pushing forward the Bockstein}\label{bocksteinSection}
We include a small section here to show that we can always pushforward the Bockstein operation.

\begin{lemma}
    Let $\beta$ be the Bockstein operation and let $f: X \rightarrow Y$ be a proper map between smooth varieties. Then we have $f_!\beta(x) = \beta f_!(x)$ for any $x \in H^*(X,\Z/\ell(*))$.
\end{lemma}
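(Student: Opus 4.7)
The plan is to reduce to two elementary cases via the standard factorization of a projective morphism as a closed embedding followed by the projection from a trivial projective bundle (Proposition \ref{integrationDecompose}), and then in each case use two facts: (i) the Bockstein is a (graded) derivation with respect to cup product, and (ii) the Bockstein vanishes on any mod $\ell$ class that admits an integral lift, since the Bockstein is the connecting map of the sequence $0\to \Z/\ell(n)\to \Z/\ell^2(n)\to \Z/\ell(n)\to 0$ and the reduction map $\Z(n)\otimes\Z/\ell^2 \to \Z(n)\otimes\Z/\ell$ is split by any integral representative.

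For the closed embedding case $i: X\hookrightarrow P$ of smooth varieties, Proposition \ref{embeddingPushforwardIsGivenByThomclass} expresses $i_!$ as the composition
\begin{equation*}
H^*(X,\Z/\ell(*)) \xrightarrow{\pi^*} H^*(N_XP,\Z/\ell(*)) \xrightarrow{-\cup \tau} H^*_X(N_XP,\Z/\ell(*)) \xrightarrow{i_1^*\circ(i_0^*)^{-1}} H^*(P,\Z/\ell(*)),
\end{equation*}
where $\tau$ is the Thom class of $N_XP$. Naturality of the Bockstein in pullbacks (the maps $\pi^*$, $i_0^*$, $i_1^*$) allows it to pass through every arrow except the cup product with $\tau$. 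For the cup product step, the derivation property gives
\begin{equation*}
\beta(\pi^*x \cup \tau) = \beta(\pi^*x)\cup\tau \pm \pi^*x \cup \beta(\tau).
\end{equation*}
Since Corollary \ref{thomIsomorphism} constructs the Thom class integrally, $\tau$ lifts to a class in $H^{2c}_X(N_XP,\Z(c))$, so $\beta(\tau)=0$, and the right-hand side equals $\pi^*(\beta x)\cup\tau$. Reassembling gives $i_!\beta(x) = \beta\, i_!(x)$.

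For the projection $\pi: P\times\mathbb{P}^n \to P$, the projective bundle formula writes any class $y\in H^*(P\times\mathbb{P}^n,\Z/\ell(*))$ uniquely as $y = \sum_{i=0}^n \pi^*(a_i)\cup u^i$ where $u = c_1(\mathcal{O}(1))$, and by definition $\pi_!(y) = a_n$. The class $u$ is the mod $\ell$ reduction of an integral Chern class, so $\beta(u) = 0$; by the derivation property, $\beta(u^i)=0$ for all $i$, and so
\begin{equation*}
\beta(y) = \sum_{i=0}^n \pi^*(\beta a_i)\cup u^i,
\end{equation*}
which after applying $\pi_!$ gives $\beta(a_n) = \beta\,\pi_!(y)$. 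Composing the two cases (via Proposition \ref{integrationDecompose}, whose conclusion that the composition is independent of the choice of factorization is needed to conclude) yields the lemma for arbitrary projective $f$.

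The only potentially nontrivial point is justifying the derivation property of $\beta$ on motivic cohomology (including cohomology with supports) and verifying that $u$ and $\tau$ genuinely admit integral lifts; both are standard. The derivation property follows from the compatibility of the connecting map of $0\to\Z/\ell\to\Z/\ell^2\to\Z/\ell\to 0$ with the external product and with the diagonal pullback defining the cup product (with support), using that $\Z/\ell^2$ is flat where needed and that the short exact sequence tensored with any flat cochain complex remains short exact. Integral liftability of $u$ is immediate from $CH^1(\mathbb{P}^n)\cong\Z\cdot u$, and integral liftability of $\tau$ is immediate from the integral version of Corollary \ref{thomIsomorphism}.
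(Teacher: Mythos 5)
Your proof is correct and takes essentially the same approach as the paper: factor $f$ as a closed embedding followed by a projection via Proposition \ref{integrationDecompose}, and in each case reduce to the facts that $\beta$ is a derivation and vanishes on the integral classes $\tau$ and $u$. The only difference is that you spell out the naturality-in-pullback step and the justification of the derivation property in a bit more detail than the paper does.
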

\begin{proof}
    We write our given proper map into a composition  $f = \pi\circ i$ of a closed embedding and a projection; by Proposition \ref{integrationDecompose} the choice of decomposition does not matter. We check each case individually.

    \textit{The case of a closed embedding:} For a closed embedding $i$, recall by Proposition \ref{embeddingPushforwardIsGivenByThomclass} that we have $i_!(x) = \varphi\circ (\pi^*(x)\smile \tau)$, where $\varphi$ are natural maps from the deformation to the normal cone, $\pi^*$ is the pull-back induced by the projection from the normal bundle, and $\tau$ is the Thom class of the normal bundle. So we check
    \begin{align*}
        \beta i_!(x)
        =& \ \beta (\varphi \circ (\pi^*(x)\smile\tau)) \\
        =& \ \varphi(\beta(\pi^*(x) \smile \tau)) \\
        =& \ \varphi(\beta(\pi^*(x))\smile \tau + \pi^*(x)\smile\beta(\tau)) \\
        =& \ \varphi(\pi^*(\beta x) \smile \tau) = i_!(\beta x)
    \end{align*}
    where we have used that $\beta$ is a derivation, and that it vanishes on the Thom class $\tau$ since $\tau$ is an integral class. That is, $\tau$ is defined with integral coefficients, and so one can reduce it mod $\ell^2$ for any $\ell$, and so from the long exact sequence on cohomology that defines the Bockstein, $\tau$ is in the kernel of $\beta$.

    \textit{The case of a projective bundle:} We have a projection $\pi: Y \times \mathbb{P}^n \rightarrow Y$ that again sends elements $a \otimes t^i$ to $a$ if $i = n$, and $0$ otherwise. So again we check
    \begin{align*}
        \pi_!(\beta (a \otimes t^i))
        =& \ \pi_!(\beta(a) \otimes t^i + a \otimes \beta(t^i)) \\
        =& \ \pi_!(\beta(a) \otimes t^i)
    \end{align*}
    since $\beta$ is a derivation, and vanishes on the integral class $t^i$.
\end{proof}

Once we know we can push forward the Bockstein, we will also abuse notation and write $\beta$ to denote its action on the homology of a subspace $X \hookrightarrow P$, again by pre- and post-composing with Poincar\'e duality. This yields a map $\beta: H_i(X,j) \rightarrow H_{i-1}(X,j)$.

The following proposition is so trivial, we almost did not mention it. However, we present it to demonstrate explicitly why we cannot utilize the motivic operations to define Thom obstructions. See also Theorem 8.4 in \cite{voevodsky2003reduced}.
\begin{proposition}\label{motivicOpsVanishForDegreeReasons}
    The motivic operations $\beta P_s$ vanish on $[X]$ for any closed subvariety $X \hookrightarrow P$.
\end{proposition}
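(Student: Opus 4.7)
The plan is to show the target group of $\beta P^s$ applied to the dual class $\tau \in A^{2c,c}_X(P)$ vanishes for trivial bidegree reasons, using twisted Poincar\'e duality to convert into a higher Chow group that is zero by the definition of Bloch's cycle complexes.

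First, I would translate the statement through twisted Poincar\'e duality (Theorem \ref{relduality}): under $\chi \circ i_*$, the fundamental class $[X] \in H_{2n}(X,n)$ corresponds to the dual class $\tau \in H^{2c}_X(P,c)$, where $n = \dim X$ and $c = d - n$ is the codimension of $X$ in $P$ (with $d = \dim P$). By the definition of the homology operations in Definition \ref{HomologyDualOperations}, it suffices to check that $\beta P^s(\tau) = 0$ in motivic cohomology with support.

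Next I would track the bidegrees. Unlike the simplicial operations $Q^s$, the motivic Steenrod operations $P^s$ of Voevodsky and Primozic shift bidegree $(i,j)$ to $(i+2s(\ell-1), j+s(\ell-1))$, so $\beta P^s$ shifts to $(i+2s(\ell-1)+1, j+s(\ell-1))$. Applied to $\tau \in H^{2c,c}_X(P)$, the class $\beta P^s(\tau)$ therefore lives in
\begin{equation*}
H^{2c+2s(\ell-1)+1}_X\bigl(P,\, c+s(\ell-1)\bigr).
\end{equation*}
Using twisted duality once more, this group is isomorphic to $H_{2n - 2s(\ell-1) - 1}(X,\, n - s(\ell-1))$, which by Lemma \ref{ZariskiDescent} equals
\begin{equation*}
CH_{n-s(\ell-1)}\bigl(X,\, -1\bigr).
\end{equation*}

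Finally, by Definition \ref{definitionOfCycleComplexes}, Bloch's cycle complex is concentrated in non-negative simplicial degrees, so $CH_{n-s(\ell-1)}(X,-1) = 0$ identically. Thus $\beta P^s(\tau) = 0$ in cohomology with support, which, after dualizing, gives $\beta P_s([X]) = 0$. There is essentially no obstacle: the entire content is the observation that $P^s$ preserves the relation $i = 2j$ defining the top of the motivic range, and the Bockstein then pushes the class one step past this boundary into a group that vanishes a priori. This is the precise reason the motivic operations $\beta P_s$ cannot detect obstructions to resolution and why we must instead work with the simplicial operations $Q_s$, whose shift in the weight index by a factor of $\ell$ keeps their image in a range where motivic cohomology can be nontrivial.
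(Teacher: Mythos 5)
Your proof is correct and follows essentially the same approach as the paper: both compute the bidegree of $\beta P_s[X]$ and observe that the resulting higher Chow group has simplicial index $-1$, so it vanishes identically. The only cosmetic difference is that you route through cohomology with support and dualize back, while the paper works directly with the bidegree shift of $P_s$ on homology; the final group $CH_{n-s(\ell-1)}(X,-1) = 0$ is the same (note the paper has a harmless typo writing $CH_{j-s(\ell-1)}$ where it means $CH_{n-s(\ell-1)}$).
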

\begin{proof}
    The operations $P_s$ send an element of bidegree $(i,j)$ to bidegree $(i-2s(\ell-1),j-s(\ell-1))$, where again $\ell$ is any prime. The fundamental class of a variety $[X]$ is in bidegree $(2n,n)$ where $n$ is the dimension of $X$, and so $\beta P_s[X]$ lives in bidegree $(2n-2s(\ell-1)-1, n-s(\ell-1))$. In terms of higher Chow groups, this lives in the group $CH_{n-s(\ell-1)}(X, 2n-2s(\ell-1)-1 - (2n-2s(\ell-1))) = CH_{n-s(\ell-1)}(X,-1) = 0$. That is, the group vanishes for trivial degree reasons.
\end{proof}

\section{Algebraic stacks and classifying maps}\label{embeddingBundlesSection}
We now analyze the cohomology of the stack $\text{Vect}_n$ and show that it is concentrated in even degrees over an algebraically closed field, assuming our cohomology for a field vanishes. A previous version of this paper discussed a ``trick'' (as observed by Deligne and Sullivan in \cite{deligne1975fibres}) to bypass the lack of classifying maps for vector bundles over smooth varieties within the category of smooth varieties; this was done by passing to an auxiliary embedding bundle, which had a tautological map to a finite Grassmannian. Unfortunately, the trick only works for cohomology theories given by constant sheaves. As a result, we freely use the theory of stacks. We will see that smoothness of a closed subvariety $X$ inside a smooth variety $P$ gives us a normal bundle, which induces a classifying map of stacks $X \rightarrow \text{Vect}_n$. For references, see \cite{stacks2019stacks} and \cite{neumann2009algebraic}.

\subsection{The classifying stack of vector bundles}\label{deligneSullivanEmbeddingBundletrick}

We let $\textnormal{Vect}_n$ denote the stack of rank $n$ vector bundles and let $\gamma^n$ denote the universal tautological bundle over $\textnormal{Vect}_n$ as usual.

\begin{lemma}
    Let $E$ be a rank $n$ vector bundle over a smooth variety $X$. Then there exists a map of stacks $f: X \rightarrow \textnormal{Vect}_n$ such that $f^*\gamma^n \xrightarrow{\simeq} E$. 
\end{lemma}
Similarly, one has a notion of closed immersions for stacks again defined as those maps that pullback to closed immersions of schemes (see Tags 04YK and 03HB in \cite{stacks2019stacks}). Thus, $\textnormal{Vect}_n$ admits a closed immersion as the zero section into $\gamma^n$, and we can consider the pair of stacks $(\gamma^n,\gamma^n-0)$. We can interpret the stack $\gamma^n-0$ as the stack of bundles equipped with a section that is not the zero section, again with morphisms preserving the section.

Finally, we can define cohomology of a stack. Recall the following categorical fact: every (pre)sheaf is a direct limit of representable (pre)sheaves (see Lemma 7.12.6 of Tag 00WO in \cite{stacks2019stacks}). In other words, every stack is a direct limit of schemes.

\begin{lemma}\label{everypresheafisacolimit}
    Let $\mathcal{C}$ be a site. Let $\mathcal{F}$ be a sheaf of sets on $\mathcal{C}$. Then there exists a diagram $\mathcal{I} \rightarrow \mathcal{C}$, $i \mapsto U_i$, such that $\mathcal{F} = \varinjlim_{\mathcal{I}} h^{\#}_{U_i}$, where $h_{U_i} = Hom(-,U_i)$ for $U_i \in \mathcal{C}$ and $\#$ denotes sheafification.
\end{lemma}
Of course, the indexing for the directed system depends on the stack. For example, $\text{Vect}_n$ is the direct limit of schemes $U$ with maps in the directed system given by the various bundles over $U$ with suitable compatibility constraints. Similarly, the total space $\gamma^n$ is given as the direct limit of the total spaces of all bundles $E$ over all schemes $U$. Moreover, since we only need the category of smooth varieties, we can restrict ourselves to the site $(Sm/k)_{Zar}$ and take direct limits of smooth varieties.

By the above lemma, assuming we have a cohomology theory in the sense of Definition \ref{generalizedcohomologytheory}, we can define cohomology of a stack $\mathcal{S}$ whose domain is the category of smooth varieties.

\begin{definition}
    Let $\mathcal{S}$ be a stack defined on $(Sm/k)_{Zar}$ and let $A$ be a cohomology theory in the sense of Definition \ref{generalizedcohomologytheory}. We define the cohomology of the stack $\mathcal{S}$ as the inverse limit $A(\mathcal{S}) := \varprojlim_{\mathcal{I}} A(U_i)$ where the diagram $\mathcal{I} \rightarrow Sm/k$ with $i \mapsto U_i$ is determined by Lemma \ref{everypresheafisacolimit}.

     Similarly, given any ring operation $\phi$ on a cohomology theory $A$ which is natural with respect to pullback, we can extend the operation $\phi$ to the cohomology of a stack again by taking limits.
\end{definition}

As motivating examples, this defines motivic cohomology and \etale cohomology for $\text{Vect}_n$ as the limits of cohomologies of smooth varieties, and extends the Steenrod operations from the \Einf-algebra structures to the cohomology of a stack. Moreover, we can consider maps of pairs $(E,E-0) \rightarrow (\gamma^n, \gamma^n-0)$ where $E$ is a rank $n$ vector bundle over a smooth variety $X$, and $\gamma^n$ is the universal bundle over $\text{Vect}_n$.  By naturality of the long exact sequence of a pair for a cohomology theory $A$, we can again define the cohomology $A(\gamma^n,\gamma^n-0)$ as the limit of cohomologies $A(E,E-0)$ over all pairs $(E,E-0)$. 

\begin{lemma}\label{thomisomorphismforuniversalbundle} (Thom isomorphism for the universal bundle)
    Let $A$ be a bigraded cohomology theory equipped with a Thom structure. We have additive isomorphisms $A^{*+2n,*+n}(\gamma^n,\gamma^n-0) \cong A^{*,*}(\textnormal{Vect}_n)$.
\end{lemma}
\begin{proof}
    By the Thom isomorphism for each pair $A^{*+2n,*+n}(E,E-0)$ where $E$ is a rank $n$ bundle over $X$, the cohomology groups of the pair $(\gamma^n,\gamma^n-0)$ are isomorphic to the inverse limit of the groups $A^{*,*}(X)$ of the various base spaces $X$, each admitting a map to $\text{Vect}_n$ corresponding to the bundle $E$. But $A^{*,*}(\text{Vect}_n)$ is defined as the limit of the groups $A^{*,*}(X)$.  
\end{proof}

Thus, by definition, for every bundle $E$ over a smooth variety $X$, we have a diagram

\begin{center}
\begin{tikzcd}
E \arrow[r] \arrow[d]
& \gamma^n \arrow[d] \\
U \arrow[r, "u"]
& \text{Vect}_n
\end{tikzcd}
\end{center}

which induces a map of pairs $(E,E-0) \rightarrow (\gamma^n,\gamma^n-0)$ and in turn induces a map on cohomology $A(\gamma^n,\gamma^n-0) \rightarrow A(E,E-0)$. Moreover, this map on cohomology commutes with any natural ring operation $\phi$ defined on cohomology of a pair. It remains then to compute the cohomology groups of $(\gamma^n,\gamma^n-0)$, which by Lemma \ref{thomisomorphismforuniversalbundle} is equivalent to computing the cohomology groups $A(\text{Vect}_n)$.

Finally, we consider the infinite Grassmannian $Gr(n,\infty)$, which, as a stack (in fact an ind-scheme), is defined as the direct limit of finite Grassmannians $Gr(n,N)$ as $N$ goes to infinity. $Gr(n,\infty)$ also has a universal bundle $\gamma^n$, which classifies globally generated vector bundles. We now defer to Morel and Voevodsky (Proposition 3.7 in \cite{morel19991}), who prove that $\text{Vect}_n$ and the infinite Grassmannian $Gr(n,\infty)$ are equivalent in the $\mathbb{A}^1$ homotopy category. In particular, for any cohomology theory $A$ in the sense of Definition \ref{generalizedcohomologytheory}, we have the following proposition.

\begin{proposition}
    For any bigraded ringed cohomology theory $A$ in the sense of Definition \ref{generalizedcohomologytheory}, we have an isomorphism of bigraded rings $A^{*,*}(\textnormal{Vect}_n) \cong A^{*,*}(Gr(n,\infty))$, where $Gr(n,\infty)$ is the infinite Grassmannian.  
\end{proposition}

We note here that the infinite Grassmannian is \textbf{not} isomorphic as a stack to $\text{Vect}_n$. The latter classifies all vector bundles, while the former classifies vector bundles that are globally generated. In the topological and smooth categories, this distinction does not appear, but it is relevant in the holomorphic category (as not every short exact sequence of holomorphic bundles splits). However, the point is that they are cohomologically equivalent.

\subsection{Cohomology of the infinite Grassmannian}
In this section, we will analyze the cohomological structure of the infinite-dimensional Grassmannian. The arguments here are standard in algebraic geometry. We present the weakest form necessary for our result. See also Lemmas 5, 6, and 7 in \cite{laksov1972algebraic}, and Example 14.6.2 in \cite{fulton2013intersection}.

\begin{proposition}
    Let $A$ be a bigraded cohomology theory in the sense of Panin--Smirnov. Assume $A^{i,j}(\textnormal{Spec }k) = 0$ for $i \neq 0$, and let $Gr(m,N)$ denote the Grassmannian of $m$-dimensional subspaces in $\mathbb{A}^N$. Then $A^{i,j}(Gr(m,N)) = 0$ when $i$ is odd.
\end{proposition}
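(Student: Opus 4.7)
The plan is to exhibit $A^{*,*}(Gr(m,N))$ as a retract of $A^{*,*}$ of an iterated projective bundle over $\mathrm{Spec}\,k$, and then quote the projective bundle formula (which holds in this axiomatic setting by the cited Section~\ref{chernclasses} and Theorem 3.9 in \cite{panin2003oriented}) together with the vanishing hypothesis on the base field.

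First I would introduce the partial flag variety $Fl(m,N)$ parametrizing chains $V_1\subset V_2\subset\cdots\subset V_m$ of subspaces of $\mathbb{A}^N$ with $\dim V_i=i$, and observe that $Fl(m,N)$ is an iterated projective bundle over $\mathrm{Spec}\,k$: namely, $V_1$ varies in $\mathbb{P}^{N-1}$, and successively $V_{i+1}/V_i$ varies in the projectivization of the quotient of the trivial rank-$N$ bundle by the tautological subbundle, which is a projective bundle of rank $N-i-1$ over the previous stage. Applying the projective bundle formula iteratively, $A^{p,q}(Fl(m,N))$ is a finite direct sum of terms of the form $A^{p-2k,q-k}(\mathrm{Spec}\,k)$, each of which vanishes for $p-2k\neq 0$ by hypothesis. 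Hence $A^{p,q}(Fl(m,N))=0$ whenever $p$ is odd.

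Next I would consider the forgetful map $\pi\colon Fl(m,N)\to Gr(m,N)$ sending a flag to its top subspace $V_m$. This map is itself an iterated projective bundle over $Gr(m,N)$: letting $\gamma_m$ denote the tautological rank-$m$ subbundle, the intermediate spaces are obtained by successively projectivizing $\gamma_m$, then the quotient of $\gamma_m$ by the universal line subbundle, and so on. The projective bundle formula then yields a decomposition
\begin{equation*}
A^{p,q}(Fl(m,N))\;\cong\;\bigoplus_{I} A^{p-2|I|,q-|I|}(Gr(m,N))
\end{equation*}
as a free $A^{*,*}(Gr(m,N))$-module, in which $\pi^*\colon A^{*,*}(Gr(m,N))\hookrightarrow A^{*,*}(Fl(m,N))$ is the inclusion of the summand corresponding to $|I|=0$.

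Finally, since $\pi^*$ is a split injection on cohomology, any class in $A^{p,q}(Gr(m,N))$ with $p$ odd maps to a nonzero class in $A^{p,q}(Fl(m,N))$ in odd first degree, contradicting the vanishing established in the first step. The main conceptual step is reducing from the Grassmannian (not itself a projective bundle tower) to the flag variety, where the iterated projective bundle structure is visible; once this reduction is made, everything else is a direct application of the projective bundle formula and the hypothesis on $A^{*,*}(\mathrm{Spec}\,k)$, so I do not anticipate any serious obstacle.
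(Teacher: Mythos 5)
Your proof is correct but takes a genuinely different route from the paper's. The paper proves the result by induction on $m$ and $N$, using the stratification of $Gr(m+1,N+1)$ with closed stratum $Gr(m,N)$ (via $C\mapsto C\oplus\mathbb{A}^1$) and open complement an affine bundle over $Gr(m+1,N)$ (Lemma 7 in \cite{laksov1972algebraic}); the localization sequence, together with purity/Thom isomorphism and homotopy invariance, then sandwiches $A^{i,j}(Gr(m+1,N+1))$ between two groups that vanish for $i$ odd by induction. You instead use the full flag variety $Fl(m,N)$ as a ``resolution'': since it is an iterated projective bundle both over $\mathrm{Spec}\,k$ and over $Gr(m,N)$, the projective bundle formula gives odd-degree vanishing for $Fl(m,N)$ and exhibits $A^{*,*}(Gr(m,N))$ as a retract of $A^{*,*}(Fl(m,N))$, forcing the vanishing. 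Both arguments rest on the same underlying inputs --- the projective bundle formula (which the paper also invokes for its base case $Gr(1,N)=\mathbb{P}^{N-1}$) and hence implicitly an orientation/Chern structure on $A$ --- so neither is hypothesis-cheaper. The paper's stratification is more hands-on and produces, with a bit more work, a Schubert-cell description of the answer; your retraction argument is shorter and more conceptual, essentially being the same split-injection mechanism the paper already uses to establish the splitting principle. One small point worth making explicit if you were to polish this: the split injectivity of $\pi^*$ is exactly statement (2) of the paper's splitting principle lemma in Section~\ref{chernclasses}, so you can cite that rather than re-derive the free-module decomposition.
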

\begin{proof}
    We proceed by induction on $m$ and $N$. If $m = 1$, then $Gr(m,N) = \mathbb{P}^N$, and by the assumption on our field $k$ and by the projective bundle theorem, the result follows.

    Suppose we have the result for all $m \leq N$. Write $\mathbb{A}^{N+1} = \mathbb{A}^N \times \mathbb{A}^1$. Then there is a closed embedding $Gr(m,N) \hookrightarrow Gr(m+1,N+1)$ which maps a subspace $C$ to $C \oplus \mathbb{A}^1$. Let us write the complement of this embedding as $U$. By Lemma 7 in \cite{laksov1972algebraic}, the open complement $U$ is an affine bundle over $Gr(m+1,N)$. By the localization sequence, we then obtain
    \begin{equation*}
        \cdots \rightarrow A^{i-2c,j-c}(Gr(m,N)) \rightarrow A^{i,j}(Gr(m+1,N+1)) \rightarrow A^{i,j}(U) \rightarrow \cdots
    \end{equation*}
    where $c$ is the codimension of $Gr(m,N)$ in $Gr(m+1,N+1)$. Moreover, since $U$ is an affine bundle, by homotopy invariance we can identify the last term with $A^{i,j}(Gr(m+1,N))$. By the inductive step, the two outer terms in the three term sequence above both vanish when $i$ is odd. The result follows immediately.
\end{proof}


\begin{corollary}
    Let $A$ be a bigraded cohomology theory in the sense of Panin--Smirnov. Assume $A^{i,j}(\textnormal{Spec }k) = 0$ for $i \neq 0$, and let $Gr(n,\infty)$ denote the infinite Grassmannian of $n$-planes. Then $A^{i,j}(Gr(n,\infty)) = 0$ when $i$ is odd. Moreover, $A^{i,j}(\gamma^n,\gamma^n-0) = 0$ when $i$ is odd, where $\gamma^n$ denotes the universal bundle over $\textnormal{Vect}_n$.
\end{corollary}

\section{Thom obstructions to resolution of singularities}\label{ThomObsVanish}
\subsection{Vanishing for smooth varieties}
We are now ready to present the motivic analog of the classical argument due to Thom.

\begin{theorem}
Let $X \hookrightarrow P$ be a closed embedding with $X$ and $P$ both smooth, over an algebraically closed field. Let $Q^\bullet$ be the simplicial Steenrod operations on $H^*_X(P,\Z/\ell(*))$, where $\ell$ is any prime, and let $Q_\bullet$ denote the dual operations on $H_*(X,\Z/\ell(*))$. The odd degree operations $\beta Q_s$ vanish on the fundamental class $[X]$. 
\end{theorem}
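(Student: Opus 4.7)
The plan is to mimic Thom's argument (Proposition \ref{mainnovelty}), translated into algebraic geometry using the machinery built up in the preceding sections. By Definition \ref{HomologyDualOperations}, the homology operation $\beta Q_s$ is conjugate to $\beta Q^s$ via the twisted Poincar\'e duality isomorphism of Theorem \ref{relduality}, which sends $[X]$ to the dual class $\tau \in H^{2c}_X(P, \Z/\ell(c))$, where $c$ is the codimension of $X$ in $P$. It therefore suffices to prove $\beta Q^s(\tau) = 0$ for every $s$.

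The next step is to pass to the normal bundle via the deformation to the normal cone. By Theorem \ref{normalBundleAmbientPairDeformationIsomorphism} there is a natural isomorphism $H^{*}_X(P, \Z/\ell(*)) \cong H^{*}_X(N_XP, \Z/\ell(*))$ under which $\tau$ is carried to the Thom class of the normal bundle $N_XP \to X$; both classes are the image of $1$ under the proper pushforward of the zero section by Proposition \ref{embeddingPushforwardIsGivenByThomclass}. Since this isomorphism is induced by the pull-backs $i_0^*, i_1^*$ and the simplicial Steenrod operations are natural, the problem reduces to showing $\beta Q^s(\tau_{N_XP}) = 0$.

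In classical topology one would now apply the classifying map $X \to Gr(c)$, but no such map exists in algebraic geometry. Instead I would apply the Deligne--Sullivan observation of Section \ref{deligneSullivanEmbeddingBundletrick}. Form the embedding bundle $\pi : Y := Emb(N_XP, \mathbb{A}^N) \to X$ for $N$ chosen much larger than $c$ and $s$, and let $\pi^* N_XP \to Y$ be the pulled-back bundle. There is a tautological classifying map $Y \to Gr(c,N)$ under which $\pi^* N_XP$ pulls back from the tautological bundle $\gamma^c$, and by functoriality of the Thom structure (Theorem \ref{paninSmirnovMainTheorem}), the Thom class $\tau_{\pi^* N_XP}$ is pulled back from the universal Thom class $\tau_\gamma \in H^{2c}_{Gr(c,N)}(\gamma^c, \Z/\ell(c))$. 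Via the Thom isomorphism (Corollary \ref{thomIsomorphism}), the group in which $\beta Q^s \tau_\gamma$ sits is identified with $H^{2s(\ell-1)+1}(Gr(c,N), \Z/\ell(c(\ell-1)))$, which vanishes by the preceding proposition on odd first-degree cohomology of the finite Grassmannian. Naturality of $\beta Q^s$ and of the Thom isomorphism then gives $\beta Q^s(\tau_{\pi^* N_XP}) = 0$, i.e., $\pi^*(\beta Q^s\tau_{N_XP}) = 0$.

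To conclude, I would apply Lemma \ref{embeddingIsEquivalentUpToLargeDegrees}: for $N$ taken sufficiently large (relative to $c$, $s$, and $\ell$), the map $\pi^*$ is an isomorphism on the cohomology of $X$ in the bidegree in which, after invoking Thom isomorphism, $\beta Q^s(\tau_{N_XP})$ lives. Hence $\beta Q^s(\tau_{N_XP}) = 0$, which implies $\beta Q^s(\tau) = 0$, and dualizing gives $\beta Q_s([X]) = 0$. The main obstacle is the bookkeeping step: one must verify that naturality of the Thom isomorphism and of the deformation to the normal cone interacts correctly with the simplicial Steenrod operations (in particular using Lemma \ref{GeneralContravarianceSteenrodOpsBloch} for the non-flat pull-backs), and that the bidegree of $\beta Q^s(\tau_{N_XP})$ falls inside the range where Lemma \ref{embeddingIsEquivalentUpToLargeDegrees} guarantees injectivity of $\pi^*$, which is purely a function of choosing $N$ large.
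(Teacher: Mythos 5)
Your proposal is correct and follows essentially the same route as the paper's own proof: reduce via twisted Poincar\'e duality to vanishing on the dual class $\tau$, pass through the deformation to the normal cone, use the Deligne--Sullivan embedding bundle and the tautological classifying map to a finite Grassmannian, deduce $\beta Q^s(\tau_\gamma)=0$ from the odd-degree vanishing of the Grassmannian's cohomology via the Thom isomorphism, and then conclude by naturality together with injectivity of $\pi^*$ in the relevant bidegree (Lemma \ref{embeddingIsEquivalentUpToLargeDegrees}). The only trivial slip is a citation: functoriality of the Thom class is part of the definition of a Thom structure (Definition \ref{definitionofThomstructure}) and is already recorded for motivic cohomology in Corollary \ref{thomIsomorphism}, rather than following from Theorem \ref{paninSmirnovMainTheorem}, which concerns the bijections among Thom structures, Chern structures, and pushforwards.
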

\begin{proof}
    The statement is equivalent to the odd degree operations $\beta Q^s$ vanishing on the mod $\ell$ dual class $\tau \in H^{2c}_X(P,\Z/\ell(c))$, where $c$ is the codimension of $X$ in $P$. By deformation to the normal cone, we have an isomorphism $H^*_X(P,\Z/\ell(*)) \cong H^{*}(N_XP, N_XP-X,\Z/\ell(*))$ where $N_XP$ denotes the normal bundle of $X$ in $P$. Moreover, as the maps in the deformation to the normal cone are all between smooth varieties, the operations $\beta Q^s$ commute with the given isomorphism of cohomology groups.

    Now we have the commutative diagram
    \begin{center}
\begin{tikzcd}
N_XP \arrow[r] \arrow[d]
& \gamma^c \arrow[d] \\
X \arrow[r]
& \text{Vect}_n
\end{tikzcd}
\end{center}
and again by functoriality of the Thom isomorphism, we see that there is a cohomology class $\gamma$ in $H^*(\gamma^c,\gamma^c-0,\Z/\ell(*))$ that is mapped to $\tau$. Specifically, $\gamma$ is in bidegree $(2c,c)$, where $c$ is the codimension of $X$ in $P$. The operations $\beta Q^s$ then vanish on $\gamma$ since $Q^s$ keeps the first index even, but $\beta$ increases it by $1$. Since $Vect_n$ has vanishing cohomology when the first index is odd, we then have $\beta Q^s(\gamma) = 0$. By naturality of the operations, these pullback to give that $\beta Q^s(\tau) = 0$. Passing through deformation to the normal cone, we conclude that $\beta Q^s$ vanish on the dual class of $X$, and so the dual homology operations vanish on $[X]$.
\end{proof}

Let us present the version for generalized cohomology theory. The proof is exactly the same.

\begin{theorem}\label{vanishingTheoremforSmooth}
    Let $X \hookrightarrow P$ be a closed embedding with $X$ and $P$ both smooth. Let $A$ be a bigraded ring cohomology theory in the sense of Panin--Smirnov, which is equipped with a proper pushforward. Suppose $A^{i,j}(\textnormal{Spec } k)$ vanishes for $i \neq 0$. Let $\phi = \phi^\bullet$ be a ring operation that preserves even degrees (in the first index) and let $\beta$ be a natural operation that sends bidegrees $(i,j)$ to $(i+1,j)$. The operation $\beta\phi^s$ vanishes on the class $\tau \in A^{2c,c}_X(P)$, where $c$ is the codimension of $X$ in $P$, and $\tau$ corresponds to the Thom class of the normal bundle of $X$ in $P$.
\end{theorem}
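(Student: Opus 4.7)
The plan is to reduce the vanishing statement for the dual class $\tau \in A^{2c,c}_X(P)$ to a statement about a universal Thom class on a finite Grassmannian, for which the conclusion follows from the vanishing of Grassmannian cohomology in odd bidegrees (first index). First, I would invoke the deformation to the normal cone: by Theorem \ref{normalBundleAmbientPairDeformationIsomorphism}, the natural maps induce an isomorphism $A^{*,*}_X(N_XP) \cong A^{*,*}_X(P)$, and by the Panin--Smirnov formalism (Proposition \ref{embeddingPushforwardIsGivenByThomclass} and Theorem \ref{paninSmirnovMainTheorem}) this isomorphism identifies $\tau$ with the Thom class of the normal bundle $N_XP \to X$. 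Because $\beta$ and $\phi^s$ are natural in maps between smooth varieties, it suffices to show $\beta\phi^s$ vanishes on the Thom class of $N_XP$.

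Next I would introduce the Deligne--Sullivan embedding bundle: pick $N$ sufficiently large and form $E = \mathrm{Emb}(N_XP, \mathbb{A}^N) \xrightarrow{\pi} X$. By Lemma \ref{embeddingIsEquivalentUpToLargeDegrees}, $\pi^*$ is an isomorphism on $A^{i,j}$ for $i$ below an explicit bound; choosing $N \gg c$ ensures the bound exceeds $2c+1$, which is the degree range relevant for $\beta\phi^s(\tau)$. The pullback $\pi^*N_XP$ has Thom class $\tilde{\tau}$ pulled back from $\tau$ (by functoriality of Thom classes), and by naturality the statement $\beta\phi^s(\tau) = 0$ is equivalent to $\beta\phi^s(\tilde{\tau}) = 0$ once we verify that $\pi^*$ also induces an isomorphism on the cohomology of the pair $(N_XP, N_XP - X) \to (\pi^*N_XP, \pi^*N_XP - E)$; this follows from the same Leray spectral sequence argument relativized to the pair, using the Thom isomorphism to translate the support statement back into an absolute statement about $E$ and $X$.

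Now I would use the tautological classifying map $E \to Gr(c,N)$, which exists by construction of the embedding bundle, and which classifies $\pi^*N_XP$ as the pullback of the tautological bundle $\gamma^c$. By functoriality of the Thom class, there is a universal Thom class $\gamma \in A^{2c,c}_{Gr(c,N)}(\gamma^c)$ pulling back to $\tilde{\tau}$. Applying the operation, naturality gives that $\beta\phi^s(\tilde{\tau})$ is the pullback of $\beta\phi^s(\gamma)$. Via the Thom isomorphism on the universal bundle, $\beta\phi^s(\gamma)$ corresponds to an element of $A^{2c+2s_0+1, \, \ast}(Gr(c,N))$ for some even shift $2s_0$ induced by $\phi^s$; the key point is that the first index is odd. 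By the proposition on the cohomology of finite Grassmannians, $A^{i,j}(Gr(c,N))$ vanishes for $i$ odd, so $\beta\phi^s(\gamma) = 0$, whence $\beta\phi^s(\tilde{\tau}) = 0$ and therefore $\beta\phi^s(\tau) = 0$.

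The main obstacle I anticipate is verifying that the Thom isomorphism intertwines $\beta\phi^s$ correctly on cohomology with support, so that parity of the first index is preserved under the hypothesis that $\phi$ preserves even first-index degrees and $\beta$ shifts by $(1,0)$. This requires the Panin--Smirnov equivalence between Thom structures, Chern structures, and proper pushforwards (Theorem \ref{paninSmirnovMainTheorem}), together with the fact that our operation is compatible with the cup product structure used to define the Thom isomorphism. A secondary subtlety is that Lemma \ref{embeddingIsEquivalentUpToLargeDegrees} must be applied in a bidegree window large enough to accommodate the shift caused by $\beta\phi^s$ followed by the Thom isomorphism; this is purely a matter of choosing $N$ sufficiently large relative to $c$ and $s$.
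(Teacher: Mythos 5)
Your proof follows exactly the same chain of reductions as the paper's: deformation to the normal cone, passing to the Deligne--Sullivan embedding bundle via Lemma \ref{embeddingIsEquivalentUpToLargeDegrees}, pulling the Thom class back from the universal one over $Gr(c,N)$, and then invoking the odd-first-degree vanishing of the Grassmannian via the Thom isomorphism. The only slip is cosmetic: after applying the Thom isomorphism, $\beta\phi^s(\gamma)$ corresponds to a class in $A^{2s_0+1,*}(Gr(c,N))$, not $A^{2c+2s_0+1,*}(Gr(c,N))$ — but the index is odd either way, so the conclusion stands.
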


\subsection{Vanishing for singular varieties with a resolution}
We have seen a vanishing result for smooth varieties. We now extend the result to singular varieties that admit a resolution of singularities. Recall given a ring operation $\phi$ on a bigraded ring cohomology theory $A$ in the sense of Panin--Smirnov, where $\phi$ has invertible Todd genus on $A(\mathbb{P}^\infty)$, we can define a new operation $\tilde{\phi}$ by
\begin{equation*}
    \tilde{\phi}(\alpha) = \phi(\alpha) \smile td_{\phi}(TP)
\end{equation*}
where $\alpha \in A_X(P)$. The operation $\tilde{\phi}$ commutes with proper pushforward by the Riemann--Roch type formula we have discussed in Section \ref{RiemannRochTypeFormulasSection}.

\begin{theorem}\label{Aprioriresolutionobstructions}
    Let $A$ be a bigraded ring cohomology theory in the sense of Panin--Smirnov, equipped with a proper pushforward. Suppose $A^{i,j}(\textnormal{Spec }k) = 0$ for $i \neq 0$. Let $\phi = \phi^\bullet$ be a ring operation with well-defined Todd genus that preserves even degrees in the first index, and let $\beta$ be a natural operation that sends bidegrees $(i,j)$ to $(i+1,j)$ that commutes with proper pushforward. Moreover, assume $\beta$ is a derivation that vanishes on the Todd genus of any bundle. Let $X' \hookrightarrow P'$ be a singular variety with a closed embedding into a smooth variety $P'$. Suppose the pair $(P',X')$ admits an embedded resolution of singularities. The operation $\beta\tilde{\phi}^s$ vanishes on the dual class $\tau' \in A^{2c,c}_{X'}(P')$ where $c$ is the codimension of $X'$ in $P'$, and where $\tilde{\phi}$ is the operation associated to $\phi$ that commutes with proper pushforward.
\end{theorem}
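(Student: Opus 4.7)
The plan is to reduce to the smooth case handled by Theorem~\ref{vanishingTheoremforSmooth} via an embedded resolution, in exact parallel to the singular half of Proposition~\ref{mainnovelty}, then transport the vanishing across the resolution using that both $\tilde{\phi}^{\bullet}$ and $\beta$ commute with the backwards map $f_{!}$.

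An embedded resolution of the pair $(P',X')$ furnishes a commutative square
\[
\begin{tikzcd}
X \arrow[r, hookrightarrow, "i"] \arrow[d, "f'"] & P \arrow[d, "f"] \\
X' \arrow[r, hookrightarrow, "i'"] & P'
\end{tikzcd}
\]
with $P$ and $X$ both smooth and $f\colon P \to P'$ a proper birational morphism of smooth varieties of the same dimension $d$. Definition~\ref{backwardsmap} yields a backwards map $f_{!} \colon A^{2c,c}_{X}(P) \to A^{2c,c}_{X'}(P')$, and the first input I would need is the identity $f_{!}(\tau) = \tau'$ on dual classes. In the higher Chow setting this is immediate from twisted Poincar\'e duality (Theorem~\ref{relduality}) together with the equalities $f_{*}[P] = [P']$ and $f'_{*}[X] = [X']$ for a proper birational map of irreducible varieties of the same dimension; in the purely axiomatic setting one may take it as the defining property of $\tau'$ via the chosen resolution.

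Granting $\tau' = f_{!}(\tau)$, the main computation is three lines. By the Riemann--Roch formula of Theorem~\ref{smirnovFormuoliTheorem} the twisted operation $\tilde{\phi}^{\bullet}$ commutes with $f_{!}$, and by hypothesis so does $\beta$, so
\[
\beta \tilde{\phi}^{s}(\tau') \;=\; \beta \tilde{\phi}^{s} f_{!}(\tau) \;=\; \beta f_{!} \tilde{\phi}^{s}(\tau) \;=\; f_{!}\, \beta \tilde{\phi}^{s}(\tau).
\]
It therefore suffices to show $\beta \tilde{\phi}^{s}(\tau) = 0$ on the smooth pair $(P,X)$. Since $\tilde{\phi}^{\bullet}(\tau) = \phi^{\bullet}(\tau) \cup td_{\phi}(TP)$, and $\beta$ is a derivation that vanishes on $td_{\phi}(TP)$,
\[
\beta \tilde{\phi}^{\bullet}(\tau) \;=\; \beta \phi^{\bullet}(\tau) \cup td_{\phi}(TP) \;+\; \phi^{\bullet}(\tau) \cup \beta\, td_{\phi}(TP) \;=\; \beta \phi^{\bullet}(\tau) \cup td_{\phi}(TP),
\]
where the sign in the Leibniz term is immaterial because the second summand is zero. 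Theorem~\ref{vanishingTheoremforSmooth} gives $\beta \phi^{s}(\tau) = 0$ for every $s$, so $\beta \phi^{\bullet}(\tau) = 0$; hence $\beta \tilde{\phi}^{\bullet}(\tau) = 0$, and extracting bidegrees yields $\beta \tilde{\phi}^{s}(\tau) = 0$ for every $s$, completing the reduction.

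The main obstacle is the identity $f_{!}(\tau) = \tau'$: there is no intrinsic twisted Poincar\'e duality for singular subvarieties in the Panin--Smirnov axioms, so one must either interpret $\tau'$ via the resolution itself (sufficient for the vanishing statement but leaving independence of the choice of resolution as a separate issue) or invoke whatever additional duality the specific theory $A$ carries. A minor bookkeeping point is that $td_{\phi}(TP)$ sits in even first-index degrees --- it is a power series in Chern classes, and Chern classes are even by Definition~\ref{definitionofChernstructure} --- so that $\tilde{\phi}$ inherits even-preservation from $\phi$ and $\beta \tilde{\phi}^{s}$ genuinely lands in odd first-index degrees, keeping the vanishing statement non-vacuous.
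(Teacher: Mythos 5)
Your proof is correct and follows essentially the same route as the paper: reduce to the smooth case via the embedded resolution, commute $\tilde\phi$ and $\beta$ through $f_!$ using the Riemann--Roch formula, apply the Leibniz rule of $\beta$ on $td_\phi(TP)$, and invoke Theorem~\ref{vanishingTheoremforSmooth}. Your caveat about the identity $f_!(\tau) = \tau'$ is well placed: the paper disposes of it with the brief remark that $f$ is generically degree one and the Thom structure is functorial, which is precisely the argument you sketch in the higher Chow setting, and the ambiguity you flag in the purely axiomatic setting (where $\tau'$ is not intrinsically characterized for singular $X'$) is genuinely present in the paper's treatment as well.
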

\begin{proof}
    An embedded resolution of singularities is a diagram of the form
    \begin{center}
\begin{tikzcd}
P \arrow[r, "f"]
& P' \\
X \arrow[r, "f'"] \arrow[hookrightarrow, u,"i"]
& X' \arrow[hookrightarrow, u, "i'"].
\end{tikzcd}
\end{center}
where $X$ and $P$ are smooth, and the maps $f$ and $f'$ are proper, with $f'$ a birational isomorphism. By Theorem \ref{vanishingTheoremforSmooth}, the operation $\beta\phi$ vanishes on $\tau \in A^{2c,c}_X(P)$ the Thom class of the normal bundle of $X$ in $P$. We then see that
\begin{equation*}
\beta\tilde{\phi}(\tau) = \beta (\phi(\tau) \smile td_{\phi}(TP)) = \beta\phi(\tau) \smile td_{\phi}(TP) + \phi(\tau)\smile \beta td_{\phi}(TP) = 0
\end{equation*}
since $\beta\phi(\tau) = 0$ and by our assumptions on $\beta$. we see that the proper pushforward $f_!$ maps $\tau$ to $\tau'$, since $f$ is generically degree $1$ and by functoriality of the Thom structure. Thus, we conclude that
\begin{equation*}
    \beta\tilde{\phi}(\tau') = \beta\tilde{\phi}(f_!(\tau)) = f_!(\beta\tilde{\phi}(\tau)) = 0
\end{equation*}
since all operations above commute with proper pushforward.
\end{proof}

Thus, if $X'$ is a singular variety with closed embedding into a smooth variety $P'$, and $X'$ admits an embedded resolution of singularities, then the mod $\ell \neq p$ operations $U_\bullet$ from Definition \ref{HomologyDualOperations} vanish on its fundamental class $[X']$. At first, this may seem like a necessary condition for a variety to admit a resolution of singularities; however, we will see later that resolution of singularities is not needed to prove this vanishing result. Unfortunately, all the examples the author knows (such as motivic mod $\ell \neq p$, and \etale mod $\ell \neq p$), vanishes for other a priori reasons other than resolution of singularities.

Let us see what happens for the operations $Q^\bullet$ defined mod $\ell = p$. Suppose we have an embedded resolution of singularities $f: P \rightarrow P'$, with a diagram as in Theorem \ref{smirnovFormuoliTheorem}. We can again write our proper map $f$ as a composition $f = \pi \circ i$ where $i$ is a closed embedding $i = f \times e: P \rightarrow P' \times \mathbb{P}^n$, where $e$ is some embedding of $P$ into $\mathbb{P}^n$. We again write the dual class of $X$ in $P$ as $\tau$, and the dual class of $X'$ in $P'$ as $\tau'$. Since the map $f$ is generically degree $1$, we see that $f_*[X] = [X']$, and so $f_!(\tau) = \tau'$. We also have $i_!(\tau) = (f \times e)_!(\tau) = \tau' \otimes t^n$, by definition of proper pushforward for higher Chow groups.

By Theorem \ref{vanishingTheoremforSmooth}, we see that $\beta Q^\bullet(\tau) = 0$. On one hand, we see that 
\begin{equation*}
    \beta Q^\bullet(\tau') = \beta Q^\bullet(f_!(\tau)) = \beta Q^\bullet (\pi_! \circ i_!)(\tau) = \beta Q^\bullet (\pi_!(\tau' \otimes t^n))
\end{equation*}
and we would like to commute $\beta Q^\bullet$ with the proper pushforward $\pi_!$ from the projective bundle $P' \times \mathbb{P}^n \xrightarrow{\pi} P'$. But on the other hand,
\begin{equation*}
    \pi_!(\beta Q^\bullet(\tau' \otimes t^n)) = \pi_!(\beta (Q^\bullet(\tau') \otimes Q^\bullet(t^n))) = 0
\end{equation*}
since $\beta(Q^\bullet(\tau')\otimes Q^\bullet(t^n)) = \beta Q^\bullet(\tau')\otimes Q^\bullet(t^n) + Q^\bullet(\tau')\otimes \beta Q^\bullet(t^n)$ as $\beta$ is a derivation, and $Q^\bullet(t^n) = 0$ for degree reasons (since $Q^0$ mod $p$ is \textit{not} the identity!). Thus, we unfortunately cannot say anything about $\beta Q^\bullet(\tau')$ for singular varieties $X'$ in smooth $P'$, as the operation $Q^\bullet$ does not have a well-defined Todd genus. Recall that the argument for mod $\ell \neq p$ worked since one could check that $Q^0$ on the cohomological generator $t$ of $\mathbb{P}^n$ was indeed the identity, and so all formulas apply.

\section{Vanishing of mod \texorpdfstring{$\ell \neq p$}{} obstructions}\label{alterationsSection}

In this section we show the mod $\ell \neq p$ obstructions we have defined are not obstructions to resolution of singularities at all. That is, they vanish without using resolution of singularities.

\begin{theorem}
    The mod $\ell \neq p$ operations $\beta U_\bullet$ from Definition \ref{HomologyDualOperations} vanish on $[X']$ for a singular variety $X'$ with closed embedding into a smooth variety $P'$. 
\end{theorem}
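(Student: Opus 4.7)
The plan is to replace resolution of singularities with de Jong's theorem on alterations, combined with the degree control of Temkin. Concretely, given the closed embedding $X' \hookrightarrow P'$ with $P'$ smooth, the theorem of de Jong \cite{de1996smoothness} provides an embedded alteration: a commutative square
\begin{center}
\begin{tikzcd}
P \arrow[r, "f"] & P' \\
X \arrow[r, "f'"] \arrow[hookrightarrow, u,"i"] & X' \arrow[hookrightarrow, u, "i'"]
\end{tikzcd}
\end{center}
with $P$ and $X$ smooth, $f$ and $f'$ proper, and $f'\colon X \to X'$ proper, surjective, and generically finite of some degree $d$. By the refinement of Temkin \cite{temkin2017tame}, we may further arrange that $d$ is prime to the fixed prime $\ell \neq p$.

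With this setup in hand, I would proceed in the following steps. First, since $X$ and $P$ are both smooth, Theorem \ref{vanishingTheoremforSmooth} applies, giving $\beta\tilde{\phi}^s(\tau) = 0$ for the dual class $\tau \in H^{2c,c}_X(P,\Z/\ell)$; equivalently, the dual homology operation $\beta U_s$ vanishes on $[X]$. Second, because $f'$ is generically finite of degree $d$, the proper pushforward sends the fundamental class as $f'_*[X] = d\,[X']$ on higher Chow groups (and hence on mod $\ell$ Borel--Moore homology via duality and Definition \ref{backwardsmap}). Third, the operations $\beta U_\bullet$ were constructed by twisting $\phi = Q^\bullet$ by the Todd genus precisely so that they commute with proper pushforward (Corollary \ref{GRRforModL} combined with the fact, verified in Section \ref{bocksteinSection}, that $\beta$ itself commutes with proper pushforward); this is where the hypothesis $\ell \neq p$ is essential, since the inverse Todd genus computed in Lemma \ref{inverseToddgenusforModL} is a unit exactly in that regime.

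Combining these three points, we obtain
\begin{equation*}
    d \cdot \beta U_s([X']) = \beta U_s(d\,[X']) = \beta U_s(f'_*[X]) = f'_*\bigl(\beta U_s([X])\bigr) = 0
\end{equation*}
in $H_*(X',\Z/\ell(*))$. Since $d$ is invertible in $\Z/\ell$ by Temkin's strengthening of de Jong, we conclude $\beta U_s([X']) = 0$, as desired.

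The main obstacle I expect is the existence of the \emph{embedded} alteration making the square above cartesian enough that $f|_{f^{-1}(X')}$ actually equals $f'$ on the smooth model $X$; this is not quite a formal consequence of de Jong's statement for $X'$ alone, but can be arranged by first alterating the pair $(P', X')$ so that the preimage of $X'$ is a strict normal crossings divisor in a smooth $P$, and then resolving (via a further alteration) the chosen irreducible component dominating $X'$. One must also verify that passing between $\phi = Q^\bullet$ and the twisted operation $\tilde\phi$ is harmless on the Thom class side, so that the vanishing for smooth $X$ really transfers to $\beta U_s$; this is immediate because $\beta$ is a derivation vanishing on Todd classes, as used in the proof of Theorem \ref{Aprioriresolutionobstructions}.
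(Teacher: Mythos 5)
Your proposal is correct and takes essentially the same approach as the paper: alterate to a smooth embedded model via de Jong, invoke Temkin to control the degree (the paper states the sharper form, a degree that is a power of $p$), apply the smooth vanishing theorem, push forward using commutativity of $\beta U_\bullet$ with $f_!$, and divide by the invertible degree mod $\ell$. Your additional remark about the subtlety of constructing a genuinely embedded alteration is a fair observation that the paper's proof glosses over.
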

\begin{proof}
    Every singular variety $X'$ admits an alteration $X$ by work of de Jong \cite{de1996smoothness}. Moreover, by work of Temkin \cite{temkin2017tame}, one can find an alteration $X$ of degree equal to powers of $p$, the characteristic. Let us now assume $X'$ has an embedded alteration $X$ where the map $f$ has degree equal to $p^m$ for some $m$.

    By Theorem \ref{vanishingTheoremforSmooth}, the operations $\beta U_\bullet$ vanish on $[X]$. Then we see that
    \begin{equation*}
        0 = f_!(\beta U_\bullet[X]) = \beta U_\bullet f_!([X]) = p^m \beta U_\bullet [X']
    \end{equation*}
    but since the operations are defined mod $\ell \neq p$, we see that $p^m$ is invertible. So we conclude $\beta U_\bullet[X'] = 0$.
\end{proof}

The above argument then proves the following modification of Theorem \ref{Aprioriresolutionobstructions}. Essentially, if the cohomology theory $A$ takes values mod $\ell \neq p$, then we can drop the assumption on the singular variety $X'$ having a resolution of singularities.

\begin{theorem}
    Let $A$ be a bigraded ring cohomology theory in the sense of Panin--Smirnov, valued in modules over $\Z/\ell$ for $\ell \neq p$, and equipped with a proper pushforward. Suppose $A^{i,j}(\textnormal{Spec }k) = 0$ for $i \neq 0$. Let $\phi = \phi^\bullet$ be a ring operation with well-defined Todd genus that preserves even degrees in the first index, and let $\beta$ be a natural operation that sends bidegrees $(i,j)$ to $(i+1,j)$ that commutes with proper pushforward. Moreover, assume $\beta$ is a derivation that vanishes on the Todd genus of any bundle. Let $X' \hookrightarrow P'$ be a singular variety with a closed embedding into a smooth variety $P'$. The operation $\beta\tilde{\phi}^s$ vanishes on the class $\tau' \in A^{2c,c}_{X'}(P')$ where $c$ is the codimension of $X'$ in $P'$, where $\tau'$ is the Thom class of the normal bundle of $X'$ in $P'$, and where $\tilde{\phi}$ is the operation associated to $\phi$ that commutes with proper pushforward.
\end{theorem}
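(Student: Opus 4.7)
The strategy is to imitate the alteration argument just given for $\beta U_\bullet$, but carried out in the axiomatic Panin--Smirnov setting. By de Jong \cite{de1996smoothness} combined with Temkin \cite{temkin2017tame}, one can find an alteration $f': X \to X'$ with $X$ smooth and $\deg f' = p^m$ for some $m \geq 0$. Since everything is quasiprojective, I can extend this to an embedded alteration: choose a smooth projective $P$ containing $X$ as a closed subvariety (e.g.\ resolve the graph of $f'$ composed with the embedding $X' \hookrightarrow P'$, or simply take $P = P' \times \mathbb{P}^n$ with $X$ the closure of the graph of some embedding) so that one obtains a commutative diagram of the form required by Definition \ref{backwardsmap}:
\begin{center}
\begin{tikzcd}
P \arrow[r, "f"] & P' \\
X \arrow[r, "f'"] \arrow[hookrightarrow, u, "i"] & X' \arrow[hookrightarrow, u, "i'"]
\end{tikzcd}
\end{center}
with $f, f'$ proper, $X, P, P'$ smooth (with $X'$ possibly singular), and $f'$ generically finite of degree $p^m$.

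Since $X$ and $P$ are both smooth, Theorem \ref{Aprioriresolutionobstructions} (or rather the intermediate calculation in its proof) applies verbatim to show that $\beta\tilde{\phi}^s(\tau) = 0$, where $\tau \in A^{2c,c}_X(P)$ is the dual class of $X$ in $P$. The key inputs are Theorem \ref{vanishingTheoremforSmooth} to get $\beta\phi^s(\tau) = 0$, and then the computation
\begin{equation*}
\beta\tilde{\phi}^s(\tau) = \beta(\phi^s(\tau) \cup td_\phi(TP)) = \beta\phi^s(\tau) \cup td_\phi(TP) + \phi^s(\tau) \cup \beta(td_\phi(TP)) = 0,
\end{equation*}
using that $\beta$ is a derivation and vanishes on the Todd class of any bundle.

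Next, I push forward along $f_!$. By functoriality of the Thom/dual-class construction and the fact that $f'$ is generically degree $p^m$ onto $X'$, the projective pushforward satisfies $f_!(\tau) = p^m \cdot \tau'$. Since $\tilde{\phi}$ commutes with $f_!$ by the Riemann--Roch formula (Theorem \ref{smirnovFormuoliTheorem}, which is the reason we replaced $\phi$ by $\tilde{\phi}$ in the first place), and since $\beta$ commutes with proper pushforward by hypothesis, we obtain
\begin{equation*}
0 = f_!\bigl(\beta\tilde{\phi}^s(\tau)\bigr) = \beta\tilde{\phi}^s\bigl(f_!(\tau)\bigr) = p^m \cdot \beta\tilde{\phi}^s(\tau').
\end{equation*}
Because $A$ takes values in $\Z/\ell$-modules with $\ell \neq p$, multiplication by $p^m$ is invertible, so $\beta\tilde{\phi}^s(\tau') = 0$.

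The main obstacle I anticipate is producing the embedded alteration $f: P \to P'$ cleanly, since the cited alteration theorems only give an alteration of $X'$ itself. The easiest way out is to note that we do not actually need $P$ to be birational to $P'$ or have controlled degree: all we need is that $f$ is proper, that the diagram commutes, and that $f|_X = f'$ has degree $p^m$ onto $X'$. Taking $P$ to be (a resolution of) a suitable compactification of the graph of $f' \hookrightarrow P \times P'$, or passing through a projective bundle as in Proposition \ref{integrationDecompose}, provides such a $P$. The rest is purely formal manipulation with the axioms.
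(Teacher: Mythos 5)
Your proposal is correct and follows essentially the same route as the paper: de Jong--Temkin alterations of degree $p^m$, the smooth vanishing theorem (Theorem \ref{vanishingTheoremforSmooth}), the derivation property of $\beta$ and its vanishing on Todd classes, commutativity of $\tilde\phi$ and $\beta$ with $f_!$, and invertibility of $p^m$ in $\Z/\ell$-modules. The paper presents this argument once for the $\beta U_\bullet$ case and then simply asserts that it generalizes; you have usefully spelled out the missing detail about producing the embedded alteration (taking $P = P' \times \mathbb{P}^n$ and using that the alteration $f'$ is projective, so that $X \hookrightarrow P' \times \mathbb{P}^n$ is a closed immersion), which the paper glosses over with ``Let us now assume $X'$ has an embedded alteration.''
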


In other words, the mod $\ell \neq p$ simplicial operations on motivic and \etale cohomology all vanish, without using resolution of singularities. In fact, the mod $\ell \neq p$ simplicial operations vanish for simpler reasons without even using alterations, by the following theorem of Brosnan and Joshua (Proposition 6.1 in \cite{brosnan2015comparison}).

\begin{theorem}
    Let $k$ have a primitive $\ell$-th root of unity, for $\ell \neq p$. Let $X$ be a variety with closed embedding into a smooth variety $P$. Then
    \begin{equation*}
        Q^s(\alpha) = B^{(q-s)(\ell-1)}P^s(\alpha), \beta Q^s(\alpha) = B^{(q-s)(\ell-1)}\beta P^s(\alpha)
    \end{equation*}
    for $s \leq q$, where $B$ denotes the motivic Bott element, and where $P^s$ denotes the motivic operations, for $\alpha \in H^i_X(P,\Z/\ell(q))$, with $i \leq 2q$. For $s > q$ both operations are zero.
\end{theorem}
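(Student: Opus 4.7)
The plan is to use the cycle class map to reduce the identity to a comparison in \etale cohomology, where the weight indexing trivializes. First, a bidegree check: $P^s$ sends $(i,q)$ to $(i+2s(\ell-1), q+s(\ell-1))$ while $Q^s$ sends $(i,q)$ to $(i+2s(\ell-1), q\ell)$, so multiplication by $B \in H^0(\textnormal{Spec }k, \Z/\ell(1))$, which shifts weight by $1$ and preserves the first index, brings $P^s(\alpha)$ into the same bidegree as $Q^s(\alpha)$ after $(q-s)(\ell-1)$ applications, since $q\ell - (q+s(\ell-1)) = (q-s)(\ell-1)$.

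Next I would apply the cycle class map $\textnormal{cl}: H^*_X(P, \Z/\ell(*)) \rightarrow H^*_{\textnormal{\'et},X}(P, \mu_\ell^{\otimes *})$ from Section \ref{cycleclassmap}. By Levine's theorem, inverting $B$ makes $\textnormal{cl}$ an isomorphism; moreover, the choice of $\zeta = \textnormal{cl}(B)$ induces canonical identifications $\mu_\ell^{\otimes n} \cong \underline{\Z/\ell}$ under which the weight index collapses and $B$ acts as the unit $\zeta$. In the Beilinson--Lichtenbaum range controlled by $i \leq 2q$ and Theorem \ref{beilinsonLichtenbaumConjectureTheorem}, $\textnormal{cl}$ is already an isomorphism, so it suffices to verify the identity after applying $\textnormal{cl}$.

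I would then invoke the compatibility of both families of operations with $\textnormal{cl}$. Joshua's construction (Section \ref{joshuaconstruction}, Remark \ref{remark3.3}) shows that $Q^s$ commutes with $\textnormal{cl}$ into the \Einf-algebra of Godement cochains of $\mu_\ell$; the analogous compatibility for Voevodsky's $P^s$ is the content of identifying the motivic Steenrod algebra with the classical mod $\ell$ topological Steenrod algebra under Beilinson--Lichtenbaum. Once on the \etale side, after collapsing all weights via $\zeta$, both $Q^s$ and $P^s$ must induce the unique mod $\ell$ Steenrod operation characterized by its Cartan and Adem axioms together with the $\ell$-th power condition in top degree. The identity $Q^s(\alpha) = B^{(q-s)(\ell-1)}P^s(\alpha)$ then encodes the conversion between the two weight-shift conventions, and the Bockstein version follows from $\beta$ being a derivation compatible with $\textnormal{cl}$. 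For vanishing when $s > q$: Theorem \ref{SteenrodOps}(2) gives $Q^s(\alpha) = 0 = \beta Q^s(\alpha)$ since $2s > 2q \geq i$, while $P^s(\alpha) = 0 = \beta P^s(\alpha)$ by the weight-instability axiom of the motivic Steenrod algebra.

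The hardest step is genuinely verifying the claim gestured at in Remark \ref{remark3.3}: that Joshua's transferred \Einf-structure on Godement cochains yields the same operations as the intrinsic one used in classical \etale Steenrod theory, and dually that $P^s$ descends compatibly. A cleaner alternative route, which may bypass this axiomatic comparison, is to work directly on cochains via the cyclic-group-action formalism of Brosnan--Joshua \cite{brosnan2015comparison}: one expresses both total power operations as equivariant pushforwards along $X \rightarrow X^{\times \ell}/C_\ell$, extracts $Q^s$ versus $P^s$ as the components of a common total operation relative to a decomposition of $H^*(BC_\ell, \Z/\ell(*))$, and reads off the weight-conversion factor as an explicit power of $B$.
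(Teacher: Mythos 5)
The paper does not contain its own proof of this statement; it is quoted directly as Proposition~6.1 of Brosnan--Joshua \cite{brosnan2015comparison}, so there is no internal argument to compare against.

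That said, your primary route through the cycle class map has a genuine gap. To lift an identity in \'etale cohomology back to motivic cohomology you need $\textnormal{cl}$ to be injective on the groups in which $Q^s(\alpha)$ and $B^{(q-s)(\ell-1)}P^s(\alpha)$ live, and Theorem~\ref{beilinsonLichtenbaumConjectureTheorem} gives an isomorphism only for first index $\leq$ weight, \emph{not} in the stated range $i \leq 2q$. Concretely, for $i = 2q$ and $s = q$ the class $Q^q(\alpha) = \alpha^\ell$ sits in bidegree $(2q\ell, q\ell)$, well outside the Beilinson--Lichtenbaum isomorphism range, and the cycle class map $CH^{q\ell}(X)\otimes\Z/\ell \to H^{2q\ell}_{\textnormal{\'et}}(X,\mu_\ell^{\otimes q\ell})$ need not be injective there. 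Inverting $B$ (Levine's theorem) does establish the identity in the localization $H^*_X(P,\Z/\ell(*))[B^{-1}]$, but descending back requires knowing that the relevant classes are not $B$-power torsion, which is not supplied by anything you invoke. Two smaller caveats: the mod $\ell$ Steenrod operations on \'etale cohomology are not ``uniquely determined by Cartan and Adem'' --- one needs a normalization such as stability or compatibility with suspension, so the uniqueness step is not yet an argument; and the compatibility of Joshua's $Q^s$ with $\textnormal{cl}$ cited in Remark~\ref{remark3.3} is precisely the circularity the paper warns about (Joshua transfers the $E_\infty$-structure \emph{to} the Godement cochains via $\textnormal{cl}$, so the commutativity is tautological, not a comparison with the intrinsic \'etale operations). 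Your closing alternative --- working at the chain level with the equivariant power-map construction of Brosnan--Joshua and reading off the weight-conversion factor directly --- is in fact what they do in \cite{brosnan2015comparison}; it sidesteps all of the injectivity issues above, and is the route to pursue.
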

Recall that the motivic operations $\beta P^\bullet$ vanish on fundamental classes of varieties for trivial degree reasons (Proposition \ref{motivicOpsVanishForDegreeReasons}). And so the mod $\ell$ simplicial operations vanish similarly, since they differ only by multiplication by an invertible element.

We also remark that the only mod $\ell = p$ operations we know that do push forward are those given by the motivic operations $P^\bullet$. But of course they again vanish for trivial degree reasons (Proposition \ref{motivicOpsVanishForDegreeReasons}).

\bibliographystyle{alpha}
\bibliography{bibliography.bib}

\end{document}